\newbox\mybox
\def\overtag#1#2#3{\setbox\mybox\hbox{$#1$}\hbox to
  0pt{\vbox to 0pt{\vglue-#3\vglue-\ht\mybox\hbox to \wd\mybox
      {\hss$\ss#2$\hss}\vss}\hss}\box\mybox}
\def\undertag#1#2#3{\setbox\mybox\hbox{$#1$}\hbox to 0pt{\vbox to
    0pt{\vglue#3\vglue\ht\mybox\hbox to \wd\mybox
      {\hss$\ss#2$\hss}\vss}\hss}\box\mybox}
\def\lefttag#1#2#3{\hbox to 0pt{\vbox to 0pt{\vglue -6pt\hbox to
      0pt{\hss$\ss#2$\hskip#3}\vss}}#1}
\def\righttag#1#2#3{\hbox to 0pt{\vbox to 0pt{\vglue -6pt\hbox to
      0pt{\hskip#3$\ss#2$\hss}\vss}}#1}
\let\ss\scriptstyle
\def\splicediag#1#2{\xymatrix@R=#1pt@C=#2pt@M=0pt@W=0pt@H=0pt}
\def\Dot{\lower.2pc\hbox to 2pt{\hss$\bullet$\hss}}
\def\Circ{\lower.2pc\hbox to 2pt{\hss$\circ$\hss}}
\def\Vdots{\raise5pt\hbox{$\vdots$}}
\newcommand\lineto{\ar@{-}}
\newcommand\dashto{\ar@{--}}
\newcommand\dotto{\ar@{.}}
\let\cal\mathcal
\renewcommand{\setminus}{\smallsetminus}
\newcommand\Q{{\mathbb Q}}
\newcommand\R{{\mathbb R}}
\newcommand\C{{\mathbb C}}
\newcommand\Z{{\mathbb Z}}
\newcommand\N{{\mathbb N}}
\newcommand\calI{\mathcal{I}}
\newcommand\calO{\mathcal{O}}
\DeclareMathOperator{\NL}{NL}
\DeclareMathOperator{\Div}{Div}
\DeclareMathOperator{\val}{val}
\DeclareMathOperator{\ord}{ord}
\DeclareMathOperator{\length}{length}
\DeclareMathOperator{\St}{St}
\DeclareMathOperator{\mult}{mult}
\newtheorem{theorem}{Theorem}[section]
\newtheorem{proposition}[theorem]{Proposition}
\newtheorem*{theorem*}{Theorem}
\newtheorem{corollary}[theorem]{Corollary}
\newtheorem{lemma}[theorem]{Lemma}
\theoremstyle{definition}
\newtheorem*{amalgamation*}{Amalgamation}
\newtheorem{example}[theorem]{Example}
\newtheorem{remark}[theorem]{Remark}
\newtheorem*{remark*}{Remark}
\newtheorem{definition}[theorem]{Definition}
\renewcommand{\int}{\operatorname{int}}
\newcommand{\lcm}{\operatorname{lcm}}
\begin{document}

\title{Inner geometry of complex surfaces: a valuative approach}

\author{Andr\'e Belotto da Silva}
\address{Aix Marseille Univ, CNRS, Centrale Marseille, I2M, Marseille, France}
\email{\href{mailto:andre-ricardo.BELOTTO-DA-SILVA@univ-amu.fr}{andre-ricardo.belotto-da-silva@univ-amu.fr}}
\urladdr{\url{https://andrebelotto.com}}

\author{Lorenzo Fantini}
\address{Goethe-Universit\"at Frankfurt, Institut f\"ur Mathematik, Frankfurt am Main, Germany}
\email{\href{mailto:fantini@math.uni-frankfurt.de}{fantini@math.uni-frankfurt.de}}
\urladdr{\url{https://lorenzofantini.eu/}}

\author{Anne Pichon}
\address{Aix Marseille Univ, CNRS, Centrale Marseille, I2M, Marseille, France}
\email{\href{mailto:anne.pichon@univ-amu.fr}{anne.pichon@univ-amu.fr}}
\urladdr{\url{http://iml.univ-mrs.fr/~pichon/}}

\begin{abstract}
Given a complex analytic germ $(X, 0)$ in $(\mathbb C^n, 0)$, the standard Hermitian metric of $\mathbb C^n$ induces a natural arc-length metric on $(X, 0)$, called the inner metric. 
We study the inner metric structure of the germ of an isolated complex surface singularity $(X,0)$ by means of an infinite family of   numerical analytic invariants, called inner rates.  
Our main result is a formula for the Laplacian of the inner rate function on a space of valuations, the non-archimedean link of $(X,0)$.  
We deduce in particular that the global data consisting of the topology of $(X,0)$, together with the configuration of a generic hyperplane section and of the polar curve of a generic plane projection of $(X,0)$, completely determine all the inner rates on $(X,0)$, and hence the local metric structure of the germ.
Several other applications of our formula are discussed in the paper.
\end{abstract}

\subjclass[2010]{Primary 32S25, 57M27; Secondary 32S55, 14B05, 13A18}

 \maketitle

 
 \section{Introduction}
 
{\renewcommand*{\thetheorem}{\Alph{theorem}}
 
Given a complex analytic germ  $(X,0) \subset (\C^n,0)$ at the origin of $\C^n$, the standard Hermitian metric of $\C^n$ induces a natural metric on $(X,0)$ by measuring the lengths of arcs on $(X,0)$.
This metric, called the \emph{inner metric}, has been widely studied in several different contexts.
For example, Bernig and Lytchak \cite{BernigLytchak2007} use it to introduce a notion of metric tangent cone, while Hsiang and Pati \cite{HsiangPati1985} and Nagase \cite{Nagase1989} use it to study the the $L^2$-cohomology of singular algebraic surfaces, following original ideas of Cheeger \cite{Cheeger1979}.

The study of the inner metric is motivated by the following fact: while it is well known that, for $\epsilon >0$ sufficiently small, $X$ is locally homeomorphic to the cone over its link $X^{(\epsilon)} = X \cap S_{\epsilon}$, where $S_{\epsilon}$ denotes the sphere centered at $0$ with radius $\epsilon$ in $\C^n$, in general the metric germ $(X,0)$ is not metrically conical.
Indeed, there are parts of its link $X^{(\epsilon)}$ whose diameters with respect to the inner metric shrink faster than linearly when approaching the singular point. 
It is then natural to study how $X^{(\epsilon)}$ behaves metrically when approaching the origin.

In this paper, we study the metric structure of the germ of an isolated complex surface singularity $(X,0)$ by means of an infinite family of numerical analytic invariants, called \emph{inner rates}.
A resolution of $(X,0)$ is \emph{good} if its exceptional locus is a normal crossing divisor.
Given a good resolution $\pi \colon X_{\pi} \to X$ of $(X,0)$ that factors through the blowup of the maximal ideal and through the Nash modification of $(X,0)$, an irreducible component $E$ of the exceptional divisor $\pi^{-1}(0)$ of $\pi$, and a small neighborhood $\cal N(E)$ of $E$ in $X_\pi$ with a neighborhood of each double point of $\pi^{-1}(0)$ removed, the inner rate $q_E$ of $E$ is a rational number that measures how fast the subset $\pi\big(\cal N(E)\big)$ of $(X,0)$ shrinks when approaching the origin (see Figure~\ref{figure:introduction} for a pictorial explanation and Definition~\ref{definition:inner_rate} for a precise definition). 
The inner rate $q_E$
is independent of the choice of an embedding of $(X,0)$ into a smooth germ, only depending on the analytic type of $(X,0)$ 
and on the divisor $E$.

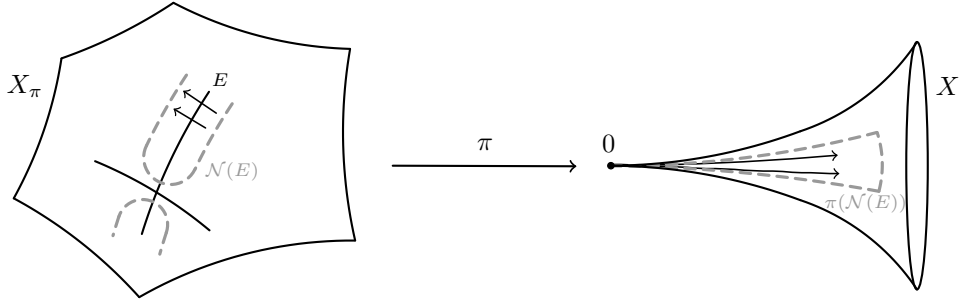
\begin{figure}[h] 
	\definecolor{wwwwww}{rgb}{0.6,0.6,0.6}  
	\begin{tikzpicture}[line cap=round,line join=round,x=1cm,y=1cm,scale=.82] 
	\clip(-7.8,-6.15) rectangle (8,-1.3); 
	\draw [->,shift={(20.533269590563883,435.45528635933516)},line width=0.6pt] 
	plot[domain=4.670325079481537:4.67879,variable=\t]({1*439.8443530756736*cos(\t r)+0*439.8443530756736*sin(\t r)},{0*439.8443530756736*cos(\t r)+1*439.8443530756736*sin(\t r)}); 
	\draw [->,shift={(1.7903967258481908,41.514388674119466)},line width=0.6pt]  plot[domain=4.717810490300544:4.799,variable=\t]({1*45.515057579285234*cos(\t r)+0*45.515057579285234*sin(\t r)},{0*45.515057579285234*cos(\t r)+1*45.515057579285234*sin(\t r)}); 
	%
	\draw [shift={(1.7474102853789522,-30.03867770068142)},line width=1.2pt,dash pattern=on 4pt off 3pt,color=wwwwww]  plot[domain=1.391856295407007:1.559669279064335,variable=\t]({1*26.040289723694034*cos(\t r)+0*26.040289723694034*sin(\t r)},{0*26.040289723694034*cos(\t r)+1*26.040289723694034*sin(\t r)});
	\draw [shift={(2.1453073608375393,12.661596119393295)},line width=1.2pt,dash pattern=on 4pt off 3pt,color=wwwwww]  plot[domain=4.705898006282137:4.967721759664982,variable=\t]({1*16.661947124943925*cos(\t r)+0*16.661947124943925*sin(\t r)},{0*16.661947124943925*cos(\t r)+1*16.661947124943925*sin(\t r)});
	\draw [shift={(4.740679075007138,-3.986053931469311)},line width=1.2pt,dash pattern=on 4pt off 3pt,color=wwwwww]  plot[domain=-0.25511974469180654:0.3151875140213137,variable=\t]({1*1.6964639181215802*cos(\t r)+0*1.6964639181215802*sin(\t r)},{0*1.6964639181215802*cos(\t r)+1*1.6964639181215802*sin(\t r)});
	\draw [shift={(3.934017779439453,0.05045630189504486)},line width=0.8pt]  plot[domain=5.037107471322216:5.695435151144759,variable=\t]({1*3.684232102547742*cos(\t r)+0*3.684232102547742*sin(\t r)},{0*3.684232102547742*cos(\t r)+1*3.684232102547742*sin(\t r)});
	\draw [rotate around={90:(7,-4)},line width=0.8pt] (7,-4) ellipse (2.0075859661541795cm and 0.17436000544584215cm);
	\draw [shift={(1.9956512084169284,4.953921162247546)},line width=0.8pt]  plot[domain=4.717024306805295:5.067563026195491,variable=\t]({1*8.954017356207599*cos(\t r)+0*8.954017356207599*sin(\t r)},{0*8.954017356207599*cos(\t r)+1*8.954017356207599*sin(\t r)});
	\draw [shift={(1.9956512084169333,-12.953921162247497)},line width=0.8pt]  plot[domain=1.2156222809840949:1.5661610003742918,variable=\t]({1*8.95401735620757*cos(\t r)+0*8.95401735620757*sin(\t r)},{0*8.95401735620757*cos(\t r)+1*8.95401735620757*sin(\t r)});
	\draw [->,line width=0.8pt] (-1.5,-4) -- (1.4784685399818294,-4.009178139065214);  
	\draw [shift={(3.9340177794394577,-8.050456301895018)},line width=0.8pt]  plot[domain=0.5877501560348264:1.246077835857371,variable=\t]({1*3.6842321025477345*cos(\t r)+0*3.6842321025477345*sin(\t r)},{0*3.6842321025477345*cos(\t r)+1*3.6842321025477345*sin(\t r)});
	\draw [shift={(5.6626065823743845,-3.453433886162705)},line width=0.8pt]  plot[domain=2.971587878520733:3.364364854915331,variable=\t]({1*7.969056769555354*cos(\t r)+0*7.969056769555354*sin(\t r)},{0*7.969056769555354*cos(\t r)+1*7.969056769555354*sin(\t r)});
	\draw [shift={(-2.162400475654458,-12.14080740026628)},line width=0.8pt]  plot[domain=1.5631628872774035:2.0916355616130495,variable=\t]({1*6.926938422096639*cos(\t r)+0*6.926938422096639*sin(\t r)},{0*6.926938422096639*cos(\t r)+1*6.926938422096639*sin(\t r)});
	\draw [shift={(-1.993120370036868,4.511268593475872)},line width=0.8pt]  plot[domain=4.231494967510567:4.682404871611727,variable=\t]({1*6.61941659093236*cos(\t r)+0*6.61941659093236*sin(\t r)},{0*6.61941659093236*cos(\t r)+1*6.61941659093236*sin(\t r)});
	\draw [shift={(-9.580186094175373,5.47252578929432)},line width=0.8pt]  plot[domain=5.049525559069649:5.297531238270618,variable=\t]({1*8.192941903720733*cos(\t r)+0*8.192941903720733*sin(\t r)},{0*8.192941903720733*cos(\t r)+1*8.192941903720733*sin(\t r)});
	\draw [shift={(-13.634190404676545,-1.2219439672984476)},line width=0.8pt]  plot[domain=5.778790061782033:6.131023724265962,variable=\t]({1*6.8431845304883625*cos(\t r)+0*6.8431845304883625*sin(\t r)},{0*6.8431845304883625*cos(\t r)+1*6.8431845304883625*sin(\t r)});
	\draw [shift={(-11.262386186857666,-11.212152680983046)},line width=0.8pt]  plot[domain=0.73203359295954:1.0744669145658485,variable=\t]({1*7.600103262564913*cos(\t r)+0*7.600103262564913*sin(\t r)},{0*7.600103262564913*cos(\t r)+1*7.600103262564913*sin(\t r)});
	\draw [shift={(-9.07120502581474,-10.558796647760097)},line width=0.8pt]  plot[domain=0.8750235682085904:1.1782686708620953,variable=\t]({1*7.178542301205925*cos(\t r)+0*7.178542301205925*sin(\t r)},{0*7.178542301205925*cos(\t r)+1*7.178542301205925*sin(\t r)});
	\draw [shift={(3.15,-7.81)},line width=0.8pt]  plot[domain=2.56053098439544:2.841226682431574,variable=\t]({1*9.12678098439172*cos(\t r)+0*9.12678098439172*sin(\t r)},{0*9.12678098439172*cos(\t r)+1*9.12678098439172*sin(\t r)});
	\draw [shift={(3.15,-7.81)},line width=1.2pt,dash pattern=on 4pt off 3pt,color=wwwwww]  plot[domain=2.5578177142562817:2.695143002337464,variable=\t]({1*9.580858789221626*cos(\t r)+0*9.580858789221626*sin(\t r)},{0*9.580858789221626*cos(\t r)+1*9.580858789221626*sin(\t r)});
	\draw [shift={(3.15,-7.81)},line width=1.2pt,dash pattern=on 4pt off 3pt,color=wwwwww]  plot[domain=2.5553979720905495:2.696651395427956,variable=\t]({1*8.705929397569424*cos(\t r)+0*8.705929397569424*sin(\t r)},{0*8.705929397569424*cos(\t r)+1*8.705929397569424*sin(\t r)});
	\draw [shift={(-5.100325862662596,-3.8679829582447454)},line width=1.2pt,dash pattern=on 4pt off 3pt,color=wwwwww]  plot[domain=2.6801286047569395:5.821721258346733,variable=\t]({1*0.4373528273756912*cos(\t r)+0*0.4373528273756912*sin(\t r)},{0*0.4373528273756912*cos(\t r)+1*0.4373528273756912*sin(\t r)});
	\draw [shift={(-5.554912282985272,-4.9543775747415335)},line width=1.2pt,dash pattern=on 4pt off 3pt,color=wwwwww]  plot[domain=-0.32814195250013256:2.8134507010896606,variable=\t]({1*0.4060137779027065*cos(\t r)+0*0.4060137779027065*sin(\t r)},{0*0.4060137779027065*cos(\t r)+1*0.4060137779027065*sin(\t r)});
	\draw [shift={(3.15,-7.81)},line width=1.2pt,dash pattern=on 4pt off 3pt,color=wwwwww]  plot[domain=2.8251244539159845:2.866957747242475,variable=\t]({1*8.755348660035269*cos(\t r)+0*8.755348660035269*sin(\t r)},{0*8.755348660035269*cos(\t r)+1*8.755348660035269*sin(\t r)});
	\draw [shift={(3.15,-7.81)},line width=1.2pt,dash pattern=on 4pt off 3pt,color=wwwwww]  plot[domain=2.824133665514144:2.8582363204509242,variable=\t]({1*9.567325582268333*cos(\t r)+0*9.567325582268333*sin(\t r)},{0*9.567325582268333*cos(\t r)+1*9.567325582268333*sin(\t r)});
	%
	\draw [->,line width=0.6pt] (-4.360687359697925,-3.1473715785552754) -- (-4.896657276796503,-2.784561173134744); 
	\draw [->,line width=0.6pt] (-4.534259155927542,-3.3877334721463765) -- (-5.0537376909615475,-3.076046351126011); 
	\begin{scriptsize} 
	\draw[color=wwwwww] (6.17,-4.61) node {$\pi(\mathcal N(E))$};
	\draw[color=black] (-4.3,-2.6) node {$E$};
	\draw[color=wwwwww] (-4.1,-4.1) node {$\mathcal N(E)$};
	\end{scriptsize}
	\draw[color=black] (7.5,-2.7) node {$X$};
	\draw [fill=black] (2.037,-4) circle (1.5pt);
	\draw[color=black] (2,-3.65) node {$0$};
	\draw[color=black] (-0,-3.65) node {$\pi$};
	\draw[color=black] (-7.46,-2.7) node {$X_\pi$};
	\end{tikzpicture}
	
	\caption{The inner rate $q_E$, which measures the size of the subset $\pi\big(\cal N(E)\big)$ of $(X,0)$ delimited by the dashed gray line on the right, can also bee defined as the inner contact of the pushforwards to $X$ of two generic curvettes of $E$ (see the discussion of Section~\ref{sec:inner rates function}).}\label{figure:introduction}
\end{figure}

As the sets of the form $\pi\big(\cal N(E)\big)$ cover the germ $(X,0)$ and can be taken to be arbitrarily small by refining the resolution $\pi$, the knowledge of all the inner rates of the exceptional divisors of all good resolutions of $(X,0)$ gives a very fine understanding of the metric structure of the germ.
For example, one can use the inner rates to compute the contact order for the inner metric of any pair of complex curve germs in $(X,0)$, using a minimax procedure (see Remark~\ref{remark:maxmin}).

The inner rates form an infinite family of analytic invariants of the germ $(X,0)$. 
Some specific inner rates have been studied in the context of Lipschitz geometry, but the bilipschitz class of $(X,0)$ can only determine finitely many of them.
This is a consequence of the tameness of the bilipschitz classification of germs, which was proved by Mostovski in the complex analytic setting \cite{Mostowski1985} and by Parusi\'{n}ski in the real semi-analytic setting \cite{Parusinski1988}.
The finite set of inner rates which are bilipschitz invariants for the inner metric has been described explicitly by Birbrair, Neumann, and Pichon\cite{BirbrairNeumannPichon2014}.

While the inner rates have raised significant interest in the last decade, it was still an open question to determine how the global geometry of the singularity $(X,0)$ influences their behavior.
Our first result states that the global data of the topology of $(X,0)$, together with the configuration of a generic hyperplane section and of the polar curve of a generic plane projection $  (X,0) \to(\C^2,0)$, not only influences the behavior of the inner rates, but in fact completely determines all of them.
 
\begin{theorem}\label{thm:A}
Let $(X,0)$ be an isolated complex surface singularity, let $E$ be an irreducible component of the exceptional  divisor of some resolution of $(X,0)$, and let $\pi\colon X_\pi\to X$ be the minimal good resolution of $(X,0)$ that factors through the blowup of the maximal ideal and through the Nash transform of $(X,0)$. 
Then the inner rate $q_E$ of $E$ is determined by, and can be computed in terms of, the following data:
\begin{enumerate}
	\item 
	the topological data consisting of the dual graph $\Gamma_\pi$ of $\pi^{-1}(0)$ decorated with the Euler classes and the genera of its vertices;
	
	\item 
	on each vertex $v$ of $\Gamma_\pi$, an arrow for each irreducible component of a generic hyperplane section of $(X,0)$ whose strict transform on $X_\pi$ passes through the irreducible component  $E_v$ of $\pi^{-1}(0)$ corresponding to $v$;
	
	\item 
	on each vertex $v$ of $\Gamma_\pi$, an arrow for each irreducible component of  the  polar curve of  a generic projection $ (X,0) \to(\C^2,0)$ whose strict transform on $X_\pi$ passes through  $E_v$.
\end{enumerate}
\end{theorem}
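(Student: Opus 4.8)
The plan is to prove this theorem as a corollary of the main technical result of the paper, namely the Laplacian formula for the inner rate function on the non-archimedean link of $(X,0)$. The strategy has two parts: first, reduce the computation of a single inner rate $q_E$ to a global problem on the dual graph $\Gamma_\pi$; second, show that this global problem has a unique solution determined by the topological and arrow data (i)--(iii).

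\medskip

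\textbf{Step 1: Interpreting inner rates via the Laplacian.} The inner rate function $v\mapsto q_v$ extends to a piecewise-$\mathbb Q$-linear function $\mathcal I$ on the non-archimedean link, and the key formula (announced in the abstract) computes its Laplacian $\Delta \mathcal I$ at every vertex $v$ of $\Gamma_\pi$ in terms of purely combinatorial data attached to $v$: the Euler class $E_v^2$, the genus $g_v$, the valency of $v$ in $\Gamma_\pi$, and the number of arrows of the generic hyperplane section and of the polar curve emanating from $v$. Concretely, at an interior point of an edge the function is affine, so the Laplacian is supported on the vertices, and the vertex contribution is a prescribed linear expression in the decorations of item (i) together with the arrow counts of items (ii) and (iii). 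I would first recall this formula and observe that it expresses $\Delta \mathcal I$ as a function on the vertex set of $\Gamma_\pi$ that depends \emph{only} on the data listed in the theorem.

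\medskip

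\textbf{Step 2: From the Laplacian to the function.} The function $\mathcal I$ lives on the finite metric graph underlying $\Gamma_\pi$ (with edge lengths determined combinatorially by the self-intersection/intersection data, again part of (i)), and on this graph the discrete Laplace equation $\Delta\mathcal I = \rho$ with prescribed right-hand side $\rho$ has a unique solution once we pin down the values of $\mathcal I$ along the arrows. For the arrows corresponding to the generic hyperplane section the inner rate is known to be $1$ (the hyperplane section is, up to inner bilipschitz equivalence, the cone, so the relevant contact is linear); for the arrows corresponding to the polar curve the inner rate along such an arrow can likewise be normalized. Imposing these boundary values turns the problem into a Dirichlet-type problem on the finite graph $\Gamma_\pi$, which — exactly as for harmonic functions on graphs with source terms — has a unique solution depending only on $\rho$, the edge lengths, and the boundary values. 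Since all of these are among the data (i)--(iii), the value $q_E = \mathcal I(v_E)$ is determined, and the uniqueness of the solution gives an explicit (if complicated) procedure to compute it.

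\medskip

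\textbf{Step 3: Reduction of the general case to this resolution.} If $E$ is a component of the exceptional divisor of an arbitrary resolution rather than of $\pi$, one first passes to a common refinement $\pi'$ dominating both; the inner rate $q_E$ equals the value of the inner rate function at the corresponding valuation, which can be recovered by interpolation from the values at the vertices of $\Gamma_\pi$, since refining a good resolution only subdivides edges and the inner rate function is piecewise linear and continuous on the non-archimedean link. Hence no information beyond $\pi$ is needed, which is why the statement singles out the minimal good resolution factoring through the blowup of the maximal ideal and the Nash transform (these two factorizations are exactly what guarantees that the hyperplane-section and polar arrows live on $\Gamma_\pi$ and that the Laplacian formula applies).

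\medskip

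\textbf{Main obstacle.} The genuine content is entirely in the Laplacian formula itself, which is the heart of the paper; granting it, the theorem is a formal consequence. The one point requiring care in this deduction is the solvability and uniqueness of the Dirichlet problem on $\Gamma_\pi$: one must check that the set of arrows (hyperplane section plus polar) meets every connected component where the source term $\rho$ fails to determine the function up to an additive constant — equivalently, that there are always enough arrows to rigidify the affine structure. This follows because the generic hyperplane section already hits enough vertices (its total transform is connected and dominates), so the boundary data is never empty on any component, and the argument concludes.
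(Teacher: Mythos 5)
Your overall strategy is the right one, and it is essentially how the paper proceeds: reduce to the Laplacian formula (Theorem~\ref{thm:B}/\ref{theorem_main}), observe that the right-hand side is determined by the data (i)--(iii), then recover the inner rate function on $\Gamma_\pi$ from its Laplacian, and finally extend from $\Gamma_\pi$ to an arbitrary divisorial valuation by the piecewise-linearity/monotonicity on strings. The uniqueness-from-Laplacian-plus-one-value argument is precisely the combinatorial alternative sketched in Remark~\ref{remark_laplacian_determines_function}. However, there are two concrete problems with your Step 2.

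First, you claim that ``for the arrows corresponding to the polar curve the inner rate along such an arrow can likewise be normalized.'' This is false: the inner rates at the $\cal P$-nodes are not known in advance, they are part of what is being computed. Only the $\cal L$-nodes carry a normalized value ($q_v=1$), a fact the paper proves via the description of $\calI_X$ as a lift of $\calI_{\C^2}$. Fortunately the error is not fatal to the plan, because you do not need boundary data at $\cal P$-nodes: $\Gamma_\pi$ is connected (the exceptional divisor of a resolution of an isolated singularity is connected), so the Laplacian on $\Gamma_\pi$ determines $\calI_X|_{\Gamma_\pi}$ up to a single additive constant, and pinning the value $1$ at any one $\cal L$-node already rigidifies it. Your ``Main obstacle'' paragraph about arrows meeting every connected component is therefore solving a non-problem.

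Second, and more seriously, your Dirichlet-type problem requires the edge lengths of $\Gamma_\pi$ as input, and these are defined as $1/(m_vm_{v'})$ in terms of the multiplicities $m_v$, which are \emph{not} explicitly among the data (i)--(iii). You never explain how to obtain them. The paper addresses this point: the vector of multiplicities is recovered as $\underline m=-M^{-1}\underline L$ from the principal-divisor relation for a generic linear form, where $M$ is the (negative-definite, hence invertible) intersection matrix encoded in (i). More generally, the paper avoids the Dirichlet framing altogether by reformulating the Laplacian identity as the matrix equation $M\underline a=\underline K+\underline L-\underline P$ of Proposition~\ref{proposition:application_linear_algebra}; since $M$ is invertible this determines $\underline a$ (hence the $q_v$, after dividing by the $m_v$) in one shot, with no need to invoke boundary conditions or the additive-constant ambiguity. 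Your Laplacian-plus-$\cal L$-node approach is viable once you add the step computing $\underline m$ from $M$ and $\underline L$, but as written it has a genuine gap there, on top of the erroneous claim about the polar arrows.
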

 
In earlier papers on the subject, the inner rates were always computed by considering a generic projection $\ell \colon (X,0) \to (\C^2,0)$ and lifting the inner rates of the components of the exceptional divisor of a suitable resolution of the discriminant curve of $\ell$. 
Outside of the simplest examples, this approach turned out to be very impractical and computationally expensive, since it is generally very hard to compute discriminant curves and to decide whether a projection is generic.
On the other hand, given the data of the theorem, which is generally much simpler to obtain, our result also provides a very easy way to compute the inner rate $q_E$ by means of an elementary linear algebra computation.
To showcase this fact, in Example~\ref{example:very_big} we illustrate, for a fairly complicated singularity, how simple it is to obtain the inner rates, which were computed in \cite[Example 15.2]{BirbrairNeumannPichon2014} using Maple.

We obtain Theorem~\ref{thm:A} as a consequence (see Corollary~\ref{corollary_global_to_inner}) of a stronger result about the Laplacian on the inner rate function on a space of valuations, a non-archimedean avatar of the link $X^{(\epsilon)}$ of $(X,0)$.
Indeed, it is very natural to study the inner rates from the point of view of valuation theory, since $q_E$ only depends on the \emph{divisorial valuation} associated with an exceptional divisor $E$.

The \emph{non-archimedean link} $\NL(X,0)$ of $(X,0)$ is defined as the set of (suitably normalized) semi-valuations on the completed local ring $\cal O_{X,0}$ of $X$ at $0$ that are trivial on $\C$ (see Definition~\ref{def:NL}). 
It is a Hausdorff topological space that contains the set of divisorial valuations of $(X,0)$ as a dense subset.
For example, if $X$ is smooth at $0$ then the associated non-archimedean link is a well known object, the valuative tree of Favre and Jonsson \cite{FavreJonsson2004}; the singular case was first studied by Favre in \cite{Favre2010}.

If $\pi\colon X_\pi\to X$ is a good resolution of $(X,0)$, then the dual graph $\Gamma_\pi$ of the exceptional divisor $\pi^{-1}(0)$ embeds naturally in $\NL(X,0)$, and the latter deforms continuously onto the former.
Moreover, these retractions identify $\NL(X,0)$ with the projective limit of all the dual graphs of the good resolutions of $(X,0)$, allowing one to see $\NL(X,0)$ as a universal dual graph.
This makes it a very convenient object in the study of the inner rates.

We endow each dual resolution graph $\Gamma_\pi$ with a metric defined as follows.
Let $e=[v,v']$ be an edge of $\Gamma_\pi$ and let $E_v$ be the component of the exceptional divisor $\pi^{-1}(0)$ corresponding to $v$.
Then the \emph{multiplicity} $m_v$ of $v$ is defined as the order of vanishing on $E$ of the pullback on $X_\pi$ of a generic hyperplane section of $(X,0)$ (that is, equivalently, $m_v$ is the multiplicity of $E_v$ in the exceptional divisor $\pi^{-1}(0)$).
Similarly, denote by $m_{v'}$ the multiplicity of $v'$.
Then we declare the length of the edge $e$ to be $1/(m_vm_{v'})$.
This length has a natural geometrical interpretation as the opposite of the screw number of the representative of the monodromy automorphism on the piece defined by the edge $e$ of the Milnor fiber of a generic linear form on $(X,0)$ (see \cite[Theorem 7.3.(iv)]{MatsumotoMontesinos2011} and the discussion of Remark~\ref{remark_length_dehn_screw}). 
These lengths give rise to a metric on each $\Gamma_\pi$ and, by refining the resolutions $\pi$, to a natural metric on $\NL(X,0)$.

The starting point of our exploration lies in the fact that the map sending a divisorial valuation $v$ to the associated inner rate $q_{E_v}$ extends canonically to a continuous function $\cal I_X\colon\NL(X,0)\to \R_+\cup\{+\infty\}$ that has the remarkable property that its restriction $\calI_X|_{\Gamma_\pi}$ to any dual resolution graph $\Gamma_\pi$ is piecewise linear with integral slopes with respect to the metric defined above.
This allows to study the inner rate function $\calI_X$ using classical tools of potential theory on metric graphs, as is done for example in an arithmetic setting by Baker and Nicaise \cite{BakerNicaise2016}.
Namely, our main result is a formula computing the \emph{Laplacian} $\Delta_{\Gamma_\pi}\big(\calI_X\big)$ of the restriction of the inner rate function to a dual resolution graph $\Gamma_\pi$, that is the divisor on $\Gamma_\pi$ whose coefficient $\Delta_{\Gamma_\pi}\big(\calI_X\big)(v)$ at a vertex $v$ of $\Gamma_\pi$ is the sum of the slopes of $\calI_X$ on the edges of $\Gamma_\pi$ emanating from $v$ (see Section~\ref{subsection_preliminaries_laplacians} for a detailed explanation of all the relevant notions).

In order to measure quantitatively the geometric data appearing in the statement of Theorem~\ref{thm:A}, 
for any vertex $v$ of $\Gamma_{\pi}$ denote by $l_v$ (respectively by $p_v$) the number of arrows on $v$ associated with a generic hyperplane section (respectively with a generic polar curve) of $(X,0)$, by $\check E_{v}$ the reduced curve obtained by removing from $E_v$ the double points of $\pi^{-1}(0)$, and by $\chi(\check E_{v})$ the Euler characteristic of $\check E_{v}$.
We can now state a slightly weaker version of our main result, Theorem \ref{theorem_main}.

\begin{theorem}[Laplacian of the inner rate function]
\label{thm:B} 
	Let $(X,0)$ be an isolated complex surface singularity and let $\pi\colon X_\pi\to X$ be a good resolution of $(X,0)$ that factors through the blowup of the maximal ideal and through the Nash transform of $(X,0)$.
	For every vertex $v$ of $\Gamma_{\pi}$, we have:
	\[
	\Delta_{\Gamma_\pi}\big(\calI_X\big)(v) =  m_v\big(2l_v-p_v-\chi(\check E_{v})\big).
	\]
\end{theorem}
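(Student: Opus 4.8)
The plan is to compute the Laplacian vertex by vertex, exploiting that $\Delta_{\Gamma_\pi}\big(\calI_X\big)(v)$ is the sum of the slopes of $\calI_X$ along the edges of $\Gamma_\pi$ issuing from $v$, hence a quantity local near $v$. As refining $\pi$ away from $v$ changes neither this sum nor any of $m_v,l_v,p_v,\chi(\check E_v)$, I may assume that $\pi$ is as convenient as needed. So I fix a generic projection $\ell=(\ell_1,\ell_2)\colon(X,0)\to(\C^2,0)$, with polar curve $\Pi$ and generic hyperplane section $H=\{\ell_1=0\}\cap X$, and I assume moreover that $\pi$ resolves $\Pi$, that the strict transforms of $\Pi$, of $H$ and of $\{\ell_2=0\}\cap X$ meet $\pi^{-1}(0)$ only at smooth points of $\pi^{-1}(0)$, transversally and with pairwise disjoint supports, and that $\calI_X$ is affine on every edge of $\Gamma_\pi$ incident to $v$. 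Writing $\widetilde\Pi$ and $\widetilde H$ for the strict transforms of $\Pi$ and $H$, we then have $l_v=\widetilde H\cdot E_v$, $p_v=\widetilde\Pi\cdot E_v$, $\chi(\check E_v)=2-2g_v-\sum_{w\sim v}(E_v\cdot E_w)$, and
\[
\Delta_{\Gamma_\pi}\big(\calI_X\big)(v)=\sum_{w\sim v}(E_v\cdot E_w)\,m_vm_w\big(\calI_X(w)-\calI_X(v)\big),
\]
the sum running over the neighbours $w$ of $v$ in $\Gamma_\pi$.

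The core of the argument is then to substitute, for the right-hand side, the description of $\calI_X|_{\Gamma_\pi}$ in terms of $\ell$ that is proved earlier in the paper — the one relating the inner rates to the polar curve $\Pi$ and to the Jacobian of $\ell\circ\pi$. Let $R=\operatorname{div}\big((\ell\circ\pi)^*(dx\wedge dy)\big)$ be the ramification divisor of $\ell\circ\pi$; it is an effective divisor linearly equivalent to $K_{X_\pi}$, its horizontal part is $\widetilde\Pi$, and we write its exceptional part as $\sum_w\rho_wE_w$. That description lets one express each difference $m_vm_w\big(\calI_X(w)-\calI_X(v)\big)$ — the slope of $\calI_X$ along the edge $[v,w]$ with the denominator $m_vm_w$ cleared — in terms of the integers $m_w$ and $\rho_w$ at the neighbours of $v$, equivalently in terms of the orders of vanishing along the $E_w$ of $\ell_1$, of $\ell_2$ and of the Jacobian of $\ell\circ\pi$.

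Summing these expressions over the neighbours of $v$, the outcome is governed by three numerical facts on $X_\pi$. First, $\pi^*H\cdot E_v=0$, used for each of the two generic linear forms $\ell_1,\ell_2$ (whose pullbacks are $\sum_wm_wE_w$ plus a strict transform meeting $E_v$ in $l_v$ points): this gives the relation $\sum_{w\sim v}(E_v\cdot E_w)m_w=-l_v-m_vE_v^2$ and is the source of the hyperplane contribution $2l_v$. Second, $R\sim K_{X_\pi}$, hence $R\cdot E_v=K_{X_\pi}\cdot E_v$, which, together with $\widetilde\Pi\cdot E_v=p_v$, determines $\sum_{w\sim v}(E_v\cdot E_w)\rho_w$ and yields the polar contribution $-p_v$. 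Third, the adjunction formula $K_{X_\pi}\cdot E_v+E_v^2=2g_v-2$, which, combined with the expression for $\chi(\check E_v)$ above, yields $-\chi(\check E_v)$. The self-intersection numbers $E_w^2$ and the genera that show up in the intermediate manipulations cancel against one another, the common factor $m_v$ being precisely the one carried by the edge-lengths $1/(m_vm_w)$, and one is left with $\Delta_{\Gamma_\pi}\big(\calI_X\big)(v)=m_v\big(2l_v-p_v-\chi(\check E_v)\big)$.

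It remains to tie up loose ends: the arrows along which $\calI_X$ tends to $+\infty$ (those carried by $\widetilde H$) must be shown to contribute nothing to the vertex formula, the edge multiplicities $E_v\cdot E_w>1$ must be tracked carefully, and one must check that the hypothesis that $\pi$ factors through the Nash transform is exactly what ensures that the strict transform of a generic polar curve meets $\Gamma_\pi$ in the controlled way assumed above, so that the identity $R\cdot E_v=K_{X_\pi}\cdot E_v$ can be brought to bear at every vertex. I expect the real work to be concentrated in the second and third steps — making explicit the precise shape of the generic-projection formula for $\calI_X$, and performing the sign-and-multiplicity bookkeeping that forces the self-intersection and genus contributions to cancel so cleanly; it is the Nash-transform assumption that makes this cancellation possible.
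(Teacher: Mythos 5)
Your approach is correct, and it is genuinely different from the one in the paper. Where the paper proves Theorem~\ref{thm:B} (via Theorem~\ref{theorem_main}) by a topological argument — expressing edge-lengths as screw numbers of the monodromy of the Milnor fibration of a generic linear form (Proposition~\ref{rk:dehn twist}), constructing a resolution adapted to a generic projection (Definition~\ref{def:adapted}), proving a local degree formula (Proposition~\ref{proposition:degree_formula}), and lifting both the Laplacian and the canonical divisor from $\NL(\C^2,0)$ through $\widetilde\ell$ (Propositions~\ref{proposition_lifting_Delta} and~\ref{proposition_lifting_K}, with a Hurwitz argument) — you replace all of this with a single intersection-theoretic computation on $X_\pi$. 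The key input you need is the identity $\rho_v := \ord_{E_v}\bigl((\ell\circ\pi)^*(dz_1\wedge dz_2)\bigr) = m_v(1+q_v)-1$, which the paper proves only in passing, in the unnamed remark following Lemma~\ref{lemma_definition_inner_rate}; from there the Laplacian at $v$ unfolds directly from $(h)\cdot E_v=0$, the identity $R\cdot E_v = K_{X_\pi}\cdot E_v$ for the ramification divisor $R$, and adjunction. Writing $a_w=m_wq_w$, these three facts give $a_vE_v^2+\sum_{e=[v,w]}a_w=2g_v-2+\val_{\Gamma_\pi}(v)+l_v-p_v$, from which $\Delta_{\Gamma_\pi}(\calI_X)(v)=m_v\bigl[(1+q_v)l_v+\val_{\Gamma_\pi}(v)+2g_v-2-p_v\bigr]$; since $q_v=1$ exactly when $l_v>0$, the term $(1+q_v)l_v$ equals $2l_v$, which yields the claimed formula. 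Two small remarks on where your sketch is imprecise but not wrong: the term $2l_v$ does not really come from applying $(h)\cdot E_v=0$ once each for $\ell_1$ and $\ell_2$, but from using that single relation twice (once in eliminating $\sum m_w$ in the sum of $a_w$, once in the term $q_v\sum m_w$), together with $q_v=1$ at $\cal L$-nodes; and the self-intersections $E_w^2$ at neighbours $w\neq v$ never actually enter, since $\rho_w=m_w+a_w-1$ is eliminated without them. Worth noting: your argument in fact proves the numerical identity $M\underline a=\underline K+\underline L-\underline P$ of Proposition~\ref{proposition:application_linear_algebra} directly, without first passing through the topological Theorem~\ref{theorem_main}. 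What the paper's longer route buys is the additional geometric content (the screw-number interpretation of edge-lengths, the local degree formula, the explicit bridge to the L\^e--Greuel--Teissier formula via Euler characteristics of pieces of the Milnor fiber), which is of independent interest; your route is shorter and arguably more transparent as a proof of the Laplacian formula itself, but it leans on the Jacobian-form computation that the paper chooses to present only as a side remark rather than elevating to a lemma.
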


We obtain this theorem by first proving the corresponding result in the smooth case and then showing how to carefully lift each term via a generic projection of $(X,0)$ onto a smooth germ $(\C^2,0)$ by studying the topology of the Milnor fiber of a generic linear form on $(X,0)$. 
%
%
The proof uses in an essential way the fact that the link of a complex surface is a $3$-dimensional real manifold, as it relies on a computation of screw numbers of Dehn twists which appear naturally as part of the monodromy of the fibration over the circle of a graph-manifold.
For this reason, such a precise result is very specific to the dimension two.

Besides the complete description of inner rates obtained in Theorem~\ref{thm:A}, Theorem~\ref{thm:B} has several interesting consequences.
First, as an immediate by-product of our result we deduce that the inner rate function $\cal I_X$ is actually linear, and not just piecewise linear, along every string of the metric graph $\Gamma_{\pi}$ (Corollary~\ref{cor:linearity on strings_general}). 
Furthermore, the fact that the Laplacian is a divisor of degree zero of $\Gamma_\pi$ (that is, the sum over all the vertices $v$ of $\Gamma_\pi$ of the integers $\Delta_{\Gamma_\pi}\big(\calI_X\big)(v)$ vanishes) allows us to retrieve the L\^e--Greuel--Teissier formula \cite{LeTeissier1981}, which is a singular version of the classical L\^e--Greuel formula, in the case of a generic linear form $h \colon (X,0) \to (\C,0)$ on a complex surface singularity (Proposition~\ref{proposition:Le-Greuel}). 

The Laplacian formula of Theorem~\ref{thm:B} also imposes strong restrictions on the possible configurations of the values of $l_v$ and $p_v$ on the vertices of the graph $\Gamma_{\pi}$.
To perform a detailed study of this phenomenon, we prove an alternative, more numerical version of  Theorem~\ref{theorem_main}, Proposition~\ref{proposition:application_linear_algebra}, which not only permits us to easily derive an extended version of Theorem~\ref{thm:A} (Corollary \ref{corollary_global_to_inner}), but also has multiple other applications.
For example, considering now the minimal resolution of $(X,0)$ which factors through the blowup of the maximal ideal (and not necessarily also through the Nash transform) of $(X,0)$, then the data of the inner rates and the integers $l_v$ on $\Gamma_\pi$ determine the localization of the  strict transform of the polar curve $\Pi$ of a generic projection of $(X,0)$, and vice versa.
More interestingly, prescribing the values of the inner rates is not required, as even without doing so Proposition~\ref{proposition:application_linear_algebra} still gives us strong restrictions on the relative positions of the components of the polar curve and of the generic hyperplane section, and on the values of the inner rates as well, as illustrated in Example~\ref{example:BS}.
This application has recently been further explored by the authors \cite{BelottodaSilvaFantiniPichon2020}.
	
We also remark that the hypotheses on the factorization of $\pi$ in Theorems~\ref{thm:A} and \ref{thm:B}, although useful to simplify the statement, are not strictly necessary.
A milder hypothesis appears in Theorem~\ref{theorem_main}, while no requirement on $\pi$ is present in Proposition~\ref{proposition:application_linear_algebra}.

These restrictions on the relative behavior of the polar curve and of the generic hyperplane section are to be interpreted as evidence describing the expected duality between the two main algorithms of resolution of a complex surface, via normalized blowups of points (after Zariski \cite{Zariski1939}) or via normalized Nash transforms (after Spivakovsky \cite{Spivakovsky1990}), whose existence was discussed by D. T. L\^e (see \cite[Section 4.3]{Le2000}). 
Further restrictions can be derived from the fact that not every degree zero divisor on a metric graph can be realized as the Laplacian of a piecewise linear function with integral slopes; this is part of a rapidly developing research area and we refer the interested reader to the recent book \cite{corry2018divisors} for an overview of divisor theory on metric graphs that focuses on similar questions.

Finally, as explained at the end of the paper, Proposition  \ref{proposition:application_linear_algebra} can also be considered from the point of Michel \cite[Theorem 4.9]{Michel2008}, which gives information on the localization of the polar curve of the germ of a finite morphism $(f,g) \colon (X,0) \to (\C^2,0)$ in terms of the topology of the pair $\big(X,(fg)^{-1}(0)\big)$. 
In the case where $f$ and $g$ are generic linear forms on $(X,0)$, that is when the morphism $ \ell = (f,g)$ is a generic projection of $(X,0)$, \cite[Theorem 4.9]{Michel2008} does not give more information than the polar multiplicity described by L\^e--Greuel--Teissier formula.
However, our Proposition~\ref{proposition:application_linear_algebra} refines this result, imposing strong restrictions on the possible localization of the polar curve. 

}

\smallskip
\begin{center}
	$\diamond$
\end{center}
\medskip

Let us conclude this introduction by discussing several future directions whose exploration is made possible by our valuative point of view on inner rates, and which will be the subjects of further study by the authors.

First of all, as observed in Remark~\ref{remark:ultrametric}, the inner rates can be used to define an ultrametric (that is, non-archimedean) inner distance on $\NL(X,0)$ via a standard minimax procedure which in this case has a very natural geometric interpretation.
Ultrametric distances on valuations spaces of surface singularities have been recently studied by Barroso, Perez, Popescu-Pampu, and Ruggiero \cite{BarrosoPerezPopescu-PampuRuggiero2018}; however only a special class of surface singularities, those having contractible non-archimedean link (called \emph{arborescent} in \emph{loc.cit.}) were endowed there with an ultrametric distance carrying a geometric interpretation.
Our non-archimedean inner distance on $\NL(X,0)$ will be used to define canonical metric decompositions of non-archimedean links, parallel to those of \cite{BirbrairNeumannPichon2014} but much more intrinsic.

We expect that the inner rate of a divisorial valuation $v$ of $(X,0)$ can be studied via birational techniques, namely using logarithmic Fitting ideals (see Remark~\ref{remark:discrepancies}).
This approach would be well suited to be generalized to the higher dimensional case, for example by using Hsiang--Pati local coordinates, which were introduced for surfaces in \cite{HsiangPati1985} and recently generalized to the three dimensional case by Belotto da Silva, Bierstone, Grandjean, and Milman 
\cite{BelottodaSilvaBierstoneGrandjeanMilman2017}.
In fact, the metric properties of singular varieties of higher dimension are poorly understood and this point of view could help shed some light on them, although we do not expect a result as sharp as our Theorem~\ref{thm:B}.

It is also worth noticing that the Laplacian formula could be fairly easily extended to the case of a complex surface germ with non isolated singularity by adding a correcting term involving the multiplicities of generic hyperplane sections of $X$ at points of $\mathrm{Sing} X \setminus \{0\}$. 
This correcting term  would appear in a generalized statement of Proposition \ref{proposition_lifting_K}, that is in the computation of the Euler Characteristic of the  part $F_v$ of the Milnor fiber of a generic linear form on $(X,0)$.  
Moreover, the fact that our formula enables us to recover a L\^e--Greuel type formula for a particular function also suggests the possibility of extending the Laplacian formula to more general settings such as the metric behavior of a pair of holomorphic germs $(f,g) \colon (X,0) \to (\C^2,0)$, which as of now is still poorly understood.

Finally, let us remark that the inner rates were used in \cite{BirbrairNeumannPichon2014} to study the Lipschitz classification of the inner metric surface germs $(X,0)$.
While in this paper we chose to focus on a metric germ $(X,0)$, and not on its Lipschitz class, our methods seem very well adapted to study questions of bilipschitz geometry.
For example, the valuation-theoretic point of view permits to give an elegant valuative version of the complete invariant on the inner Lipschitz geometry of $(X,0)$ constructed in \cite{BirbrairNeumannPichon2014}.

\smallskip
\begin{center}
$\diamond$
\end{center}
\medskip

Let us give a short outline of the structure of the paper.
All the material about non-archimedean links, dual graphs, and potential theory on metric graphs that we need in the paper is recalled in section \ref{sec:preliminary}.
Section \ref{sec:inner rates function} is devoted to the construction of the inner rate function  $\cal I_X$ and to the proof of its basic properties, such as its piecewise linearity.
In Section~\ref{section_main_theorem} we state and prove our main result, Theorem \ref{theorem_main}, which is a stronger version of Theorem \ref{thm:B}.
Section~\ref{section:applications} is devoted to proving a slightly more precise version of Theorem~\ref{thm:A}, Corollary~\ref{corollary_global_to_inner}, and discussing other applications of Theorem~\ref{theorem_main}.

We aimed to make the paper entirely self-contained. 
The main definitions and results are illustrated with the help of a recurring example, that of the $E_8$ surface singularity, which is treated in detail throughout the paper (see Examples \ref{example:E8_graph}, \ref{example:E8_metric_graph}, \ref{example:E8_Laplacien}, \ref{example:E8_inner_rate}, \ref{example:E8_LPK}, and \ref{example:E8_Theorem}).

\subsection*{Acknowledgments} 
We warmly thank Nicolas Dutertre, Charles Favre, Fran\c cois Loeser, Walter Neumann, Patrick Popescu-Pampu, and Matteo Ruggiero for interesting conversations, and the anonymous referee for their thorough comments and for pointing out a mistake in Example~\ref{example:BS}.
We are deeply indebted to Delphine et Marinette, whose method from \cite{Ayme1946} we applied with much profit while searching for the correct statement of Theorem~\ref{theorem_main}, and to the master Maurits C. Escher, whose celebrated lithography \emph{Ascending and Descending} \cite{Escher1960} inspired the argument given in Remark~\ref{remark_laplacian_determines_function}.
This work has been partially supported by the project \emph{Lipschitz geometry of singularities (LISA)} of the \emph{Agence Nationale de la Recherche} (project ANR-17-CE40-0023) and by the 
\emph{PEPS--JCJC M\'etriques singuli\`eres, valuations et g\'eom\'etrie Lipschitz des vari\'et\'es} of the \emph{Institut National des Sciences Math\'ematiques et de leurs Interactions}.

 
\section{Preliminaries}
\label{sec:preliminary}
 
\subsection{Non-archimedean links}
\label{subsection_preliminaries_NL}
Throughout the paper, $(X,0)$ will always be an isolated complex surface singularity.
We will begin by introducing the valuation space we will work with, the {non-archimedean link} of $(X,0)$.

Denote by $\calO=\widehat{\calO_{X,0}}$ the completion of the local ring of $X$ at $0$ with respect to its maximal ideal.
A (rank 1) \emph{semivaluation} on $\calO$ is a map $v\colon \calO\to \R_+\cup\{+\infty\}$ such that, for every $f$ and $g$ in $\cal O$ and every $\lambda$ in $\C$, we have
\begin{enumerate}
	\item $v(fg)=v(f)+v(g)$;
	\item $v(f+g)\geq \min \{v(f),v(g)\}$;
	\item $v(\lambda)=\begin{cases}
	+\infty & \mbox{if } \lambda=0\\
	0 & \text{otherwise.}
	\end{cases}$
\end{enumerate}
Note that we do not require $0$ to be the only element sent to $+\infty$, which is why our maps are only semivaluations  rather than valuations.
If $v$ is a semivaluation on $\calO$, the valuation of $v$ on the maximal ideal $\mathfrak M$ of $\calO$ is defined as $v(\mathfrak M)=\inf\big\{v(f)\,\big|\,f\in\mathfrak M\big\}$.
Observe that by definition $v(\mathfrak M)$ can be computed as the valuation of the equation of a generic hyperplane section of $(X,0)$.

\begin{example}\label{example_divisorial_valuation}
The main example of (semi)valuation that we will consider in this paper is the following.
Let $\pi\colon X_\pi\to X$ be a good resolution of $(X,0)$ and let $E$ be an irreducible component of the exceptional divisor $  \pi^{-1}(0)$. Then the map
\begin{align*}
v_E \colon & \calO \longrightarrow \R_+\cup\{+\infty\}\\ 
&f\longmapsto \frac{\mathrm{ord}_E(f)}{\mathrm{ord}_E(\mathfrak M)}
\end{align*}
 is a valuation on $\calO$.
We call it the \emph{divisorial valuation} associated with $E$.
Note that this valuation does not depend on the choice of $\pi$, in the following sense: if $\pi'=\pi\circ\rho$ is another good resolution of $(X,0)$ that dominates $\pi$, then $v_E=v_{E'}$, where $E'$ denotes the strict transform of $E$ in $X_{\pi'}$.
If $v$ is the divisorial valuation associated with a component of the exceptional divisor $\pi^{-1}(0)$ of a good resolution $\pi\colon X_\pi\to X$ of $(X,0)$, we will generally denote by $E_v$ this prime divisor, and by $m_v$ the positive integer $\ord_{E_v}(\mathfrak M)$, which we call the \emph{multiplicity} of $E_v$.
This terminology is justified by the fact that $m_v$ is also the multiplicity of $E_v$ in $\pi^{-1}(0)$, when the latter is considered with its natural scheme structure.
In practice, $m_v$ is usually computed as the order of vanishing along $E_v$ of the total transform by $\pi$ of a generic hyperplane section of $(X,0)$.
For a plethora of examples of semivaluations we refer the reader to \cite[Chapter 1]{FavreJonsson2004}.
\end{example}

We are interested in a projectivisation of the space semivaluations on $\cal O$.
This is be achieved by only considering those semivaluations that are normalized by requiring that the valuation of the maximal ideal is 1.

\begin{definition}\label{def:NL}
The \emph{non-archimedean link} $\NL(X,0)$ of $(X,0)$ is the topological space whose underlying set is
\[
\NL(X,0)=\big\{v\colon \cal O\to\R_+\cup\{+\infty\}\text{ semivaluation}\;\big|\; v(\mathfrak{M})=1 \hbox{ and } v|_{\C}=0\big\}
\]
and whose topology is induced from the product topology of $\big(\R_+\cup\{+\infty\}\big)^\mathfrak{M}$ (that is, it is the topology of the point-wise convergence).
\end{definition}

\begin{remark}
	The non-archimedean link can be endowed with an additional analytic structure, by considering the space of semi-valuations on $\mathcal O$ as a non-archimedean analytic space, in the sense of Berkovich \cite{Berkovich1990}, over the field $\C$ endowed with the trivial absolute value.
	This point of view was developed in \cite{Fantini2018}, where it was used to obtain a non-archimedean characterization of the essential valuations of a surface singularities in arbitrary characteristic, and later in \cite{FantiniFavreRuggiero2018} to give a characterization of sandwiched surface singularities.
	Moreover, this can be done independently from classical resolution of singularities, as it is possible to study the analytic structure of non-archimedean links to deduce the existence of resolutions of singularities of complex surfaces (see \cite[Theorem 8.6]{FantiniTurchetti2018}).
	However, as the analytic structure of $\NL(X,0)$ plays no role in this paper, we will not discuss it further.
\end{remark}

If $(X,0)=(\C^2,0)$ is a smooth surface germ, its non-archimedean link $\NL(\C^2,0)$ is the so-called \emph{valuative tree}, the main object of study of the monograph \cite{FavreJonsson2004}.
It is an infinite real tree which has several applications in commutative algebra and to the study of the dynamics of complex polynomials in two variables.

The non-archimedean link $\NL(X,0)$ is a useful object if one wants to study the resolutions of $(X,0)$, as we will now explain.
We call \emph{good resolution} of $(X,0)$ a proper morphism $\pi\colon X_\pi \to X$ such that $X_\pi$ is regular, $\pi$ is an isomorphism outside of its exceptional locus $\pi^{-1}(0)$, and the latter is a strict normal crossing divisor on $X_\pi$.
The fact that the exceptional divisor has normal crossing allows us to associate with it its \emph{dual graph} $\Gamma_\pi$, which is the graph whose vertices are in bijection with the irreducible components of $\pi^{-1}(0)$, and where two vertices of $\Gamma_\pi$ are joined by an edge for each point of the intersection of the corresponding components.
We will generally write $v$ for a vertex of $\Gamma_\pi$ and $E_v$ for the associated component of $\pi^{-1}(0)$ (note that this is consistent with the notation introduced in Example~\ref{example_divisorial_valuation}).
We also write $g(v)$ for the genus of the component $E_v$; it will sometimes be useful to think of $g(v)$ as a function with finite support on the topological space underlying $\Gamma_\pi$.

While in general not all good resolutions of $(X,0)$ factor through the blowup of its maximal ideal (see Example~\ref{example:BS}), throughout the paper this will be the case most of the time.
We call the subset of $\NL(X,0)$ consisting of the divisorial valuations associated with the exceptional components of the blowup of the maximal ideal of $\NL(X,0)$ the set of \emph{$\cal L$-nodes} of $(X,0)$.
In other words, this is the set of the Rees valuations of the maximal ideal of $(X,0)$.

\begin{example}\label{example:E8_graph}
	Consider the standard singularity $X=E_8$, which is the hypersurface in $\C^3$ defined by the equation $x^2+y^3+z^5=0$.   
	A good resolution $\pi\colon X_\pi\to X$ of $(X,0)$ can be obtained by the method described in \cite[Chapter II]{Laufer1972}; this method is particularly useful for us because it is based on projections to $\C^2$ and thus fits well with our point of view.
	It considers the projection $\ell = (y,z) \colon (X,0) \to (\C^2,0)$ and, given a suitable embedded resolution $\sigma \colon Y \to \C^2$ of the associated discriminant curve $\Delta \colon y^3+z^5=0$, gives a simple algorithm to compute a resolution of $(X,0)$ as a cover of $Y$.
	In this example, the dual graph $\Gamma_\sigma$ of the minimal embedded resolution $\sigma$ of $\Delta$ in $\C^2$ is depicted on the left of Figure~\ref{figure:curve}.
	Its vertices are labeled as $\nu_0, \nu_1, \nu_2$, and $\nu_3$ in their order of appearance as exceptional divisors of blowups in the resolution process.
	The negative number attached to each vertex denotes the self-intersection of the corresponding exceptional curve, while the arrow denotes the strict transform of $\Delta$. 
	In this case Laufer's algorithm requires us to refine the resolution $\sigma$ to another resolution $\sigma'$ obtained by blowing up once each double point of the exceptional divisor $\sigma^{-1}(0)$.
	This yields the dual graph $\Gamma_{\sigma'}$ depicted on the right of Figure~\ref{figure:curve}.
	Again, each vertex is decorated by the self-intersection of the corresponding exceptional curve and the arrow denotes the strict transform of $\Delta$. 
	
	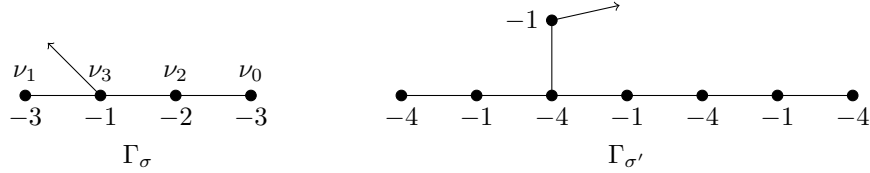
\begin{figure}[h] 
	\centering
	\begin{tikzpicture}
	\node(a)at(-.5,-.8){$\Gamma_\sigma$};
	\draw[thin ](-2,0)--(1,0);
	\draw[fill ] (-2,0)circle(2pt);
	\draw[fill ] (-1,0)circle(2pt);
	\draw[fill ] (0,0)circle(2pt);
	\draw[fill ] (1,0)circle(2pt);
	
	\draw[thin,>-stealth,->](-1,0)--+(-0.7,0.7);

	\node(a)at(-2,-0.3){$-3$};
	\node(a)at(-1,-0.3){$-1$};
	\node(a)at(0,-0.3){$-2$};
	\node(a)at(1,-0.3){$-3$};
	
	\node(a)at(-2,0.3){$\nu_1$};
	\node(a)at(-1,0.3){$\nu_3$};
	\node(a)at(0,0.3){$\nu_2$};
	\node(a)at(1,0.3){$\nu_0$};

	\begin{scope}[xshift=5cm]

\draw[thin ](-2,0)--(4,0);
\draw[thin ](0,0)--(0,1);
\draw[fill ] (-2,0)circle(2pt);
\draw[fill ] (-1,0)circle(2pt);
\draw[fill ] (0,0)circle(2pt);
\draw[fill ] (1,0)circle(2pt);
\draw[fill ] (2,0)circle(2pt);

\draw[fill ] (3,0)circle(2pt);

\draw[fill ] (4,0)circle(2pt);

\draw[fill ] (0,1)circle(2pt);

	\draw[thin,>-stealth,->](0,1)--+(+0.9,0.2);

\node(a)at(-2,-0.3){$-4$};
\node(a)at(-1,-0.3){$-1$};
\node(a)at(0,-0.3){$-4$};
\node(a)at(1,-0.3){$-1$};
\node(a)at(2,-0.3){$-4$};
\node(a)at(3,-0.3){$-1$};
\node(a)at(4,-0.3){$-4$};
\node(a)at(-0.4,1){$-1$};

\node(a)at(1,-0.8){$\Gamma_{\sigma'}$};
	
	\end{scope}
	\end{tikzpicture}
	\caption{Dual resolution graphs for the plane curve $\{y^3+z^5=0\}$.}\label{figure:curve}
\end{figure}

	\noindent Then it follows from Laufer's algorithm that the exceptional divisor $\pi^{-1}(0)$ of $\pi$ is a tree of eight rational curves whose dual graph $\Gamma_\pi$ is depicted in Figure~\ref{fig:E8}.   
	We label the vertices of $\Gamma_\pi$ as $v_0, \ldots, v_7$, since this will make it simple to refer to this example in the rest of the paper. 
	Each vertex of $\Gamma_\pi$ is mapped by $\ell$ to the corresponding vertex of $\Gamma_\sigma$.
	As before, the negative numbers attached to the $v_i$ are the self-intersections of the corresponding exceptional components; as no self-intersection is equal to $-1$ this resolution is minimal. 
	Observe that $\pi$ factors through the blowup of the maximal ideal of $(X,0)$ and the only $\cal L$-node of $(X,0)$ is $v_0$.
	This example will be detailed throughout the whole paper, see Examples~\ref{example:E8_metric_graph}, \ref{example:E8_Laplacien}, \ref{example:E8_inner_rate}, \ref{example:E8_LPK}, and \ref{example:E8_Theorem}.
	\begin{figure}[h] 
		\centering
		\begin{tikzpicture}
		\draw[thin ](-2,0)--(4,0);
		\draw[thin ](0,0)--(0,1);
		\draw[fill ] (-2,0)circle(2pt);
		\draw[fill ] (-1,0)circle(2pt);
		\draw[fill ] (0,0)circle(2pt);
		\draw[fill ] (1,0)circle(2pt);
		\draw[fill ] (2,0)circle(2pt);
		
		\draw[fill ] (3,0)circle(2pt);
		
		\draw[fill ] (4,0)circle(2pt);
		
		\draw[fill ] (0,1)circle(2pt);

		\node(a)at(4,0.3){ $v_0$};
		\node(a)at(3,0.3){ $v_1$};
		\node(a)at(2,0.3){ $v_2$};
		\node(a)at(1,0.3){ $v_3$};
		\node(a)at(0.3,0.3){$v_4$};
		\node(a)at(-1,0.3){$v_5$};
		\node(a)at(-2,0.3){ $v_6$};
		\node(a)at(0.3,1){ $v_7$};
		
		\node(a)at(-2,-0.4){$-2$};
		\node(a)at(-1,-0.4){$-2$};
		\node(a)at(0,-0.4){$-2$};
		\node(a)at(1,-0.4){$-2$};
		\node(a)at(2,-0.4){$-2$};
		\node(a)at(3,-0.4){$-2$};
		\node(a)at(4,-0.4){$-2$};
		\node(a)at(-0.4,1){$-2$};
		\end{tikzpicture} 
		\caption{The dual resolution graph $\Gamma_{\pi }$ for the singularity $E_8$.}\label{fig:E8}
	\end{figure}
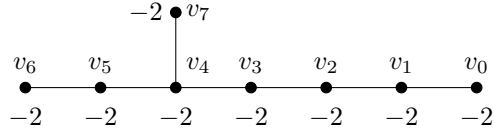
\end{example}


We will now explain how to describe the structure of a non-archimedean link $\NL(X,0)$ in terms of the dual graphs of the good resolutions of $(X,0)$.
The results below can be deduced fairly easily from the analogous result for the valuative tree $\NL(\C^2,0)$, originally proven in \cite[Chapter 6]{FavreJonsson2004} and elegantly summarized in \cite[Section 7]{Jonsson2015}, as was discussed for example in \cite[Section 2.5]{GignacRuggiero2017}.

Given any  good resolution $\pi \colon X_\pi \to X$ with dual graph $\Gamma_\pi$, there exist a natural embedding
\[
i_\pi \colon \Gamma_{\pi} {\lhook\joinrel\longrightarrow} \NL(X,0)
\]
and a canonical continuous retraction 
\[
r_\pi\colon \NL(X,0)\longrightarrow\Gamma_\pi
\]
such that $r_\pi\circ i_\pi=\mathrm{Id}_{\Gamma_\pi}$.
The embedding $i_\pi$ maps each vertex $v$ of $\Gamma_\pi$ to the divisorial valuation associated with the component $E_v$, and each edge $e=[v,v']$ that corresponds to a point $p$ of the intersection $E_v\cap E_v'$ to the set of monomial valuations on $X_\pi$ at $p$. 
More precisely, given $0\leq\omega\leq1$ and $f\in\cal O$, we choose an isomorphism $\widehat{\mathcal O_{X_\pi,P}}\cong \C[[x,y]]$ such that $E_v$ and $E_{v'}$ are defined locally at $p$ by $x=0$ and $y=0$, and define a valuation $v_{\pi,p,\omega}$ on $\cal O$ by setting 
\[
v_{\pi,p,\omega}(f)=\inf\big\{(i,j)\cdot((1-\omega)/m_{v'},\omega/m_{v})\,\big|\,a_{i,j}\neq0\big\},
\]
where $f$ is an element $\cal O$ and we denote by $\pi^*f=\sum a_{i,j}x^iy^j$ the image in $\widehat{\mathcal O_{X_\pi,P}}$ of the pullback of $f$ through $\pi$.
Observe that $v_{\pi,p,0}=v$ and $v_{\pi,p,1}=v'$, while the points $v_{\pi,p,\omega}$ for $0<\omega<1$ form a segment joining the two divisorial valuations $v$ and $v'$ in $\NL(X,0)$.

The retraction $r_\pi$ is defined as follows. 
Given a point $w$ of $\NL(X,0)$, it follows from the valuative criterion of properness that $w$ has a non-empty center $\mathrm{c}_\pi(w)$ on $X_\pi$.
The center $\mathrm{c}_\pi(w)$ is the biggest irreducible subvariety $Y$ of $X_\pi$ such that for every regular function $f$ on $X$ we have $w(f)\geq0$ and $w(f)>0$ if and only if $f$ vanishes on $Y$.
If $\mathrm{c}_\pi(w)$ is a whole component $E_v$ of $\pi^{-1}(0)$ or a point of $E_v$ that is smooth in $\pi^{-1}(0)$, set $r_\pi(w)=v$.
If $\mathrm{c}_\pi(w)=p$ lies on the intersection of two components $E_v$ and $E_{v'}$ of $\pi^{-1}(0)$, set $r_\pi(w)=v_{\pi,p,\omega}$, with $\omega=w(y)/\big(w(x)+w(y)\big)$, where as above $x$ and $y$ are local equations for $E_v$ and $E_{v'}$ at $p$.

The continuous retractions $r_\pi$ induce then a natural homeomorphism
\[
\NL(X,0)\stackrel{\cong}{\longrightarrow}\varprojlim_\pi \Gamma_\pi
\] 
from $\NL(X,0)$ to the inverse limit of the dual graphs $\Gamma_\pi$, where $\pi$ ranges over the filtered family of good resolutions of $(X,0)$ (see \cite[Theorem 2.25]{GignacRuggiero2017} and \cite[Theorem 7.9]{Jonsson2015}).
This means that the non-archimedean link $\NL(X,0)$ can be thought of as a universal dual graph, making it a very convenient object for studying the combinatorics of the resolutions of $(X,0)$.
 
Dual resolution graphs can be endowed with a natural metric as follows.
Let $\pi \colon X_\pi \to X$ be a good resolution of $(X,0)$ which factors through the blowup of the maximal ideal of $(X,0)$.
	We define a metric on $\Gamma_\pi$ by declaring the length of an edge $e=[v,v']$ to be 
 \[
\length(e) = \frac{1}{ m_v m_{v'}}.
 \]

\begin{remark}\label{remark_length_dehn_screw}
This metric has a natural geometric interpretation following \cite[Theorem 7.3.(iv)]{MatsumotoMontesinos2011}.
Indeed, if the edge $e=[v,v']$ corresponds to a point $P$ of $E_v\cap E_{v'}$, which in turn corresponds to a union of annuli in the Milnor fiber of a generic linear form on $(X,0)$, then the opposite $-\length(e) = - \frac{1}{ m_v m_{v'}}$ of the length of $e$ coincides with the screw number of the representative of the monodromy automorphism (which is a product of Dehn twists) on these annuli. 
This observation, which will be explained in detail in Section~\ref{subsection_monodromy_lengths}, will play an important role in the proof of our main result. 
\end{remark}

\begin{example}\label{example:E8_metric_graph}
	Consider again the singularity $(X,0)=(E_8,0)$ of Example~\ref{example:E8_graph} and its minimal good resolution $\pi \colon X_{\pi} \to X$. 
	In Figure~\ref{fig:E8 multiplicities lengths}, the vertices of $\Gamma_\pi$ are decorated with the multiplicities of the corresponding exceptional components (in parenthesis), which can be computed by choosing a generic linear form $h \colon (X,0) \to (\C,0)$ on $(X,0)$, for example $h=z$.
	The edges of $\Gamma_\pi$ are decorated with the corresponding lenghts. 
	For example, we can observe that the length of the path from $v_0$ to $v_7$ is $1/6+1/12+1/20+1/30+1/18=7/18$.
	The $\cal L$-node $v_0$ is decorated with one arrow, representing the fact that the strict transform $h^*$ of $h$ on $X_\pi$ is an irreducible curve passing through the divisor $E_{v_0}$. 
	\begin{figure}[h] 
		\centering
		\begin{tikzpicture}
		\draw[thin,>-stealth,->](4,0)--+(0.7,0.7);

		\draw[thin ](-2,0)--(4,0);
		\draw[thin ](0,0)--(0,1);
		\draw[fill ] (-2,0)circle(2pt);
		\draw[fill ] (-1,0)circle(2pt);
		\draw[fill ] (0,0)circle(2pt);
		\draw[fill ] (1,0)circle(2pt);
		\draw[fill ] (2,0)circle(2pt);
		
		\draw[fill ] (3,0)circle(2pt);
		
		\draw[fill ] (4,0)circle(2pt);
		
		\draw[fill ] (0,1)circle(2pt);

		\node(a)at(5,0.7){ $h^*$};
		\node(a)at(3.5,0.3){ $\frac{1}{6}$};
		\node(a)at(2.5,0.3){ $\frac{1}{12}$};
		\node(a)at(1.5,0.3){ $\frac{1}{20}$};
		\node(a)at(0.6,0.3){$\frac{1}{30}$};
		\node(a)at(-0.6,0.3){$\frac{1}{24}$};
		\node(a)at(-1.5,0.3){ $\frac18$};
		\node(a)at(0.2,0.6){ $\frac{1}{18}$};
		
		\node(a)at(-2,-0.4){$(2)$};
		\node(a)at(-1,-0.4){$(4)$};
		\node(a)at(0,-0.4){$(6)$};
		\node(a)at(1,-0.4){$(5)$};
		\node(a)at(2,-0.4){$(4)$};
		\node(a)at(3,-0.4){$(3)$};
		\node(a)at(4,-0.4){$(2)$};
		\node(a)at(-0.4,1){$(3)$};
		
		\end{tikzpicture} 
		\caption{ }\label{fig:E8 multiplicities lengths}
	\end{figure}
\end{example}
 
Let $\pi\colon X_\pi\to X$ be a good resolution of $(X,0)$ that factors through the blowup of the maximal ideal of $(X,0)$, let $p$ be a point of the exceptional divisor $\pi^{-1}(0)$ of $\pi$ that lies on the intersection of two irreducible components $E_v$ and $E_{v'}$ of $\pi^{-1}(0)$, and let $\pi'\colon X_{\pi'}\to X$ be the good resolution of $(X,0)$ obtained from $\pi$ by blowing up $X_\pi$ at the point $p$.
Then the exceptional component $E_w$ of $(\pi')^{-1}(0)$ arising from the blowup of $p$ has multiplicity $m_w=m_v+m_{v'}$. 
Since ${1}/{m_v(m_v+m_{v'})}+{1}/{m_{v'}(m_v+m_{v'})}={1}/{m_vm_{v'}}$, this means that the canonical inclusion $\Gamma_{\pi}\hookrightarrow\Gamma_{\pi'}$, which is bijective at the level of the underlying topological spaces, is also an isometry.
Therefore, by passing to the limit, the metrics on the dual graphs $\Gamma_\pi$ define a metric on the non-archimedean link $\NL(X,0)$.
We call this metric the {\it skeletal metric} on $\NL(X,0)$.


\subsection{Generic projections} \label{subsec:projection}
We will study the surface germ $(X,0)$ using suitable projections to $(\C^2,0)$, making use of a classical notion of generic projection due to Teissier.

Fix an embedding of $(X,0)$ in some smooth germ $(\C^n,0)$, and consider the morphism $\ell_{\cal D}\colon(X,0)\to(\C^2,0)$ obtained by restricting to $X$ the projection along a $(n-2)$-dimensional linear subspace $\cal D$ of $\C^n$. 
Whenever $\ell_{\cal D}$ is finite, the associated \emph{polar curve} $\Pi_{\cal D}$ is the closure in $(X,0)$ of the ramification locus of $\ell_{\cal D}$ in $X\setminus\{0\}$. 
The Grassmannian variety of $(n-2)$-planes in $\C^n$ contains a dense open set $\Omega$ such that $\ell_{\cal D}$ is finite and the family $\{\Pi_{\cal D}\}_{\cal D\in\Omega}$ is well behaved (for example, it is equisingular in a strong sense).
The precise definition, which can be found in \cite[Section 2]{NeumannPedersenPichon2018}, builds on work of Teissier (see in particular \cite[Lemme-cl\'e V 1.2.2]{Teissier1982}).
We say that a morphisms $\ell\colon (X,0)\to(\C^2,0)$ is a \emph{generic projection} of $(X,0)$ if $\ell=\ell_{\cal D}$ for some $\cal D$ in $\Omega$.

The smallest modification 
of $(X,0)$ that resolves the basepoints of the family of polar curves $\{\Pi_{\cal D}\}_{\cal D\in\Omega}$ is the \emph{Nash transform} of $(X,0)$ (see {\cite[Part III, Theorem 1.2]{Spivakovsky1990}} and {\cite[Section 2]{Gonzalez-Sprinberg1982}}), which is the blowup of $(X,0)$ along its Jacobian ideal.
We call the subset of $\NL(X,0)$ consisting of the divisorial valuations associated with the exceptional components of the Nash transform of $(X,0)$ the set of \emph{$\cal P$-nodes} of $(X,0)$.
In another terminology, this is the set of the Rees valuations of the Jacobian ideal of $X$.

If $\pi \colon X_{\pi} \to X$ is a good resolution of $(X,0)$ which factors through its Nash transform, we say that  a projection $\ell \colon (X,0) \to (\C^2,0)$  is {\it generic with respect to} $\pi$ if it is a generic projection in the sense above and the strict transform of its polar curve via $\pi$ only intersects the components of $\pi^{-1}(0)$ corresponding to the $\cal P$-nodes.

A generic projection   $\ell \colon (X,0) \to (\C^2,0)$ induces a natural morphism
\[
\widetilde{\ell} \colon \NL(X,0) \to \NL(\C^2,0).
\]
Indeed,  $\ell$ induces a map $\ell^\# \colon \widehat{\mathcal O_{X,0}} \to \widehat{\mathcal O_{\C^2,0}}$, hence a point of $\NL(X,0)$ (which is a semivaluation on $\widehat{\mathcal O_{X,0}}$) gives rise to a point of $\NL(\C^2,0)$ simply by pre-composing the semivaluation with $\ell^\#$. 
A concrete way to compute $\widetilde\ell(v)$ for a divisorial valuation $v\in\NL(X,0)$ goes as follows. 
Take a good resolution $\pi$ of $(X,0)$ such that $v$ is associated with an irreducible  component $E_v$ of $\pi^{-1}(0)$ and consider a generic pair of curvettes $\gamma$ and $\gamma'$ of $E_v$. 
Let $\sigma$ be the minimal sequence of blowups of $(\C^2,0)$ such that the strict transforms of $\ell(\gamma)$ and $\ell(\gamma')$ by $\sigma$ meet the exceptional divisor $\sigma^{-1}(0)$ at distinct points. 
Then, being generic, they meet $\sigma^{-1}(0)$ at smooth points along the exceptional curve $C_{\nu}$ created at the last blowup. 
It can be seen via a standard Hirzebruch--Jung argument that neither the morphism $ \sigma$ nor the divisorial valuation $\nu \in \NL(\C^2,0)$ depend on the choice of the generic pair $\gamma, \gamma'$, and indeed we have $\widetilde{\ell}(v) =\nu $.

Since $\ell$ is finite, it is not hard to show that $\widetilde{\ell}$ is a branched covering, that is a finite map that is a finite topological covering out of a nowhere dense \emph{ramification locus}.
On the level of dual graphs, if $\pi\colon X_\pi\to X$ is a good resolution of $(X,0)$ that factors through its Nash transform and $\sigma\colon Y_\sigma\to \C^2$ is any sequence of blowups of $\C^2$ above the origin such that $\widetilde\ell\big(V(\Gamma_\pi)\big)\subset V(\Gamma_\sigma)$, then $\widetilde\ell(\Gamma_\pi)$ is the subgraph of $\Gamma_\sigma$ spanned by the vertices in $\widetilde\ell\big(V(\Gamma_\pi)\big)$, and the restriction of $\widetilde\ell$ to $\Gamma_\pi$ is a (ramified) covering of graphs onto its image.


\subsection{Laplacians on metric graphs}
\label{subsection_preliminaries_laplacians}

We will briefly recall some basic notions of divisor theory on metric graphs.

For us a \emph{graph} is a   finite and connected metric graph 
\[
\Gamma = \big( V(\Gamma), E(\Gamma), l\colon E(\Gamma) \to \Q_{>0} \big),
\]
where $V(G)$ is the set of vertices of $\Gamma$, $E(G)$ is the set of its edges, and $l$ attaches a length to each edge of $\Gamma$.
We allow $\Gamma$ to have loops and multiple edges.
We will freely identify $\Gamma$ with its geometric realization, which is the metric space whose metric is induced by the lengths of its edges. 

A \emph{divisor} $D=\sum a_v[v]$ of $\Gamma$ is a finite sum of points of $\Gamma$ with integral coefficients $a_v \in \Z$.
We also denote by  $D(v)$  the coefficient  $a_v$  of a divisor $D$ at a point $v$ of $\Gamma$, and by $\Div(\Gamma)=\bigoplus_{v\in\Gamma}\Z[v]$ the abelian group of divisors of $\Gamma$.
The \emph{degree} $\deg(D)$ of $D$ is the integer $\deg(D)=\sum_{v\in\Gamma}D(v)$.

A function $F\colon \Gamma\to \R$ is said to be \emph{piecewise linear} if $F$ is a continuous piecewise affine map with integral slopes  (with respect to the metric induced by $l$ on $\Gamma$) and $F$ has only finitely many points of non-linearity on each edge of $\Gamma$.

\begin{definition} \label{definition:laplacian}
	If $F\colon \Gamma\to\R$ is a piecewise linear map, its \emph{Laplacian} $\Delta_\Gamma(F)$ is the divisor of $\Gamma$ whose coefficient $\Delta_\Gamma(F)(v)$ at a point $v$ of $\Gamma$ is the sum of the outgoing slopes of $F$ at $v$.
\end{definition}

\begin{example}\label{example:E8_Laplacien}
	Consider the metric graph $\Gamma_\pi$ associated with the dual graph of the minimal resolution of the singularity $E_8$, as described in Examples~\ref{example:E8_graph} and \ref{example:E8_metric_graph}. 
	Let $F$ be the function on $\Gamma_\pi$ which is linear on its edges and such that $\big(F(v_i)\big)_{i=0}^7=\big(1,\frac{4}{3},\frac{3}{2},\frac{8}{5},\frac{5}{3},\frac{7}{4},2,2\big)$; we will see in Example~\ref{example:E8_inner_rate} that this function $F$ is the inner rate function of $E_8$.
	Then it is easy to see that $F$ grows linearly with slope $2$ on the path from $v_0$ to $v_6$, while it grows linearly with slope $6$ on the edge $[v_{4},v_{7}]$.
	Therefore, its Laplacian is the divisor $\Delta_{\Gamma_\pi}(F)=2[v_0]+6[v_4]-2[v_6]-6[v_{7}]$.
\end{example}

Observe that, as we do not require $F$ to be linear inside the edges of $\Gamma$, its Laplacian $\Delta_\Gamma(F)$ is not necessarily supported on $V(\Gamma)$.
Moreover, since every segment $[v,v']$ in $\Gamma$ such that $F|_{[v,v']}$ is linear contributes with the same slope but opposite signs to the Laplacian $\Delta_\Gamma(F)(v)$ and $\Delta_\Gamma(F)(v')$ at $v$ and $v'$ respectively, the Laplacian $\Delta_\Gamma(F)$ of $F$ is a divisor of degree 0.

A function $f\colon \Gamma'\to\Gamma$ between two metric graphs induces a natural map $f_*\colon \Div(\Gamma')\to\Div(\Gamma)$ defined by sending a divisor $D=\sum_{v'\in\Gamma'} a_{v'}[v']$ to the divisor $f_*D=\sum_{v\in\Gamma}b_v[v]$, where $b_v=\sum_{v'\in f^{-1}(v)}a_v$.

Similarly, we call \emph{divisor} on $\NL(X,0)$ a finite sum of points of $\NL(X,0)$ with integral coefficients, and we denote by $\Div\big(\NL(X,0)\big)=\bigoplus_{v\in\NL(X,0)}\Z[v]$ the abelian group of divisors on $\NL(X,0)$.
Then, if $\pi\colon X_\pi\to X$ is a good resolution of $(X,0)$, the retraction map $r_\pi\colon\NL(X,0)\to \Gamma_\pi$ induces a map of divisors $(r_\pi)_*\colon \Div\big(\NL(X,0)\big) \to \Div(\Gamma_\pi)$ by the same formula as above.


\section{The inner rate function} \label{sec:inner rates function}

Let $(X,0) \subset (\C^n,0)$ be a surface germ with an isolated singularity.
In this section we define the inner rate function on a non-archimedean link $\NL(X,0)$ and prove its basic properties.

We will use the big-Theta asymptotic notations of Bachmann--Landau in the following form: given two function germs $f,g\colon \big([0,\infty),0\big)\to \big([0,\infty),0\big)$ we say $f$ is   \emph{big-Theta} of $g$ and we write   $f(t) = \Theta \big(g(t)\big)$ if there exist real numbers $\eta>0$ and $K >0$ such that for all $t$, if $f(t)\leq \eta$ then ${K^{-1}}g(t) \leq f(t) \leq K g(t)$. 
   
Let $(\gamma,0)$ and $(\gamma',0)$ be two distinct germs of complex curves on the surface germ $(X,0)\subset(\C^n,0)$, and denote by $S_{\epsilon}$ the sphere in $\C^n$ having center $0$ and radius $\epsilon>0$.
Denote by $d_i$ the inner distance on $(X,0)$.
The \emph{inner contact} between $\gamma$ and $\gamma'$  is the rational number $q_i=q^X_i(\gamma, \gamma')$  defined by 
\[
d_i \big(\gamma \cap S_{\epsilon}, \gamma' \cap S_{\epsilon}\big) = \Theta(\epsilon^{q_i}).
\]

\begin{remark}
While the existence of the inner contact $q^X_i(\gamma, \gamma')$ and its rationality can be deduced from the work of \cite{KurdykaOrro1997}, in the case that interests us this can also be seen as a consequence of the next lemma.
\end{remark}

The following lemma is fundamental, as it will allow us to define the inner rate of a divisorial valuation of $\NL(X,0)$.

Recall that if $\pi \colon X_{\pi} \to X$ is a resolution of $X$ and if $E$ is an irreducible component of $\pi^{-1}(0)$, a {\it curvette} of $E$ is  a smooth curve germ $(\gamma,p)$ in $X_{\pi}$, where $p$ is a point of $E$ which is a smooth point of  $\pi^{-1}(0)$  and such that $\gamma$ and $E$ intersect transversely.

\begin{lemma} \label{lemma_definition_inner_rate}
Let $v \in\NL(X,0)$ be a divisorial valuation on $(X,0)$ and let $\pi \colon  X_\pi \to X$ be a good resolution of $(X,0)$ which factors through the blowup of the maximal ideal and through the Nash transform of $(X,0)$ and such that $v$ is the divisorial valuation associated with an irreducible component $E_{v}$ of $\pi^{-1}(0)$. Consider two curvettes $\gamma^*$ and $\widetilde{\gamma}^*$ of $E_{v}$ meeting it at distinct points, and write $\gamma = \pi(\gamma^*)$ and $\widetilde{\gamma} = \pi(\widetilde{\gamma}^*)$.
Then the inner contact $q^X_i(\gamma, \widetilde{\gamma})$ between $\gamma$ and $\widetilde{\gamma}$ only depends on $v$ and not on the choice of $\pi$, $\gamma^*$, and $\widetilde{\gamma}^*$. Moreover, $m_{v} q^X_i(\gamma, \widetilde{\gamma})$ is an integer,  $q^X_i(\gamma, \widetilde{\gamma}) \geq 1$, and if $\ell \colon (X,0) \to (\C^2,0)$ is a generic projection with respect to $\pi$ we have $q_i^X(\gamma, \widetilde{\gamma}) = q_i^{\C^2}\big(\ell(\gamma), \ell(\widetilde{\gamma})\big).$
\end{lemma}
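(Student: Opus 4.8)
The strategy is to reduce everything to a computation on a resolution, and then to transfer the statement across a generic projection $\ell$ to the well-understood case of plane curves. First I would fix $\pi$ and the two curvettes $\gamma^*$, $\widetilde\gamma^*$ at distinct smooth points $p,\widetilde p$ of $E_v$. The key local picture is that near $p$ the surface $X_\pi$ is smooth, $E_v$ is a coordinate axis, and a curvette is a transverse disc; the inner distance on $(X,0)$ pulled back to a neighborhood $\cal N(E_v)$ is, up to bilipschitz equivalence, comparable to $\epsilon^{q}\cdot(\text{distance along }E_v)$ where $q$ is some rate attached to $E_v$, plus the radial contribution. Concretely, I would parametrize $\gamma$ and $\widetilde\gamma$ by a common parameter so that points at distance $\epsilon$ from $0$ in $X$ correspond to $E_v$-curvettes based near $p$ and $\widetilde p$; the shortest path between $\gamma\cap S_\epsilon$ and $\widetilde\gamma\cap S_\epsilon$ inside $X$ can be taken (after controlling the geometry) to run through $\pi(\cal N(E_v))$, and its length is $\Theta(\epsilon^{q_E})$ for the number $q_E$ appearing in Definition~\ref{definition:inner_rate}. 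This simultaneously shows the inner contact exists and equals $q_E$, hence depends only on $v$ and not on $\pi$, $\gamma^*$, $\widetilde\gamma^*$ (since $q_E$ is intrinsic to the divisorial valuation, as recalled in the introduction, and any two good resolutions with the stated factorization properties are dominated by a common one on which the strict transform of $E_v$ carries the same valuation).

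Next, the integrality of $m_v\,q^X_i(\gamma,\widetilde\gamma)$ and the inequality $q^X_i(\gamma,\widetilde\gamma)\ge 1$: both are cleanest to obtain via the generic projection. If $\ell\colon(X,0)\to(\C^2,0)$ is generic with respect to $\pi$, then by the description of $\widetilde\ell$ in Section~\ref{subsec:projection}, the divisorial valuation $v$ maps to a divisorial valuation $\nu\in\NL(\C^2,0)$ realized on a sequence of blowups $\sigma$ of $(\C^2,0)$, and the images $\ell(\gamma)$, $\ell(\widetilde\gamma)$ have strict transforms meeting $\sigma^{-1}(0)$ transversally at distinct smooth points of the last-created component $C_\nu$, i.e. they are pushforwards of curvettes of $C_\nu$. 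For plane curve germs the inner metric coincides (up to bilipschitz) with the outer metric, so the inner contact $q_i^{\C^2}(\ell(\gamma),\ell(\widetilde\gamma))$ equals the usual contact order $\tfrac{1}{m_\nu}\mathrm{ord}$-type quantity, which is a classical Hirzebruch--Jung / Eisenbud--Neumann computation giving a rational number with denominator dividing $m_\nu=m_v$ and bounded below by $1$ (distinct curvettes of the same divisor have contact exactly the "height" of that divisor in the Enriques diagram, always $\ge 1$). So the arithmetic statements follow once we know $q^X_i(\gamma,\widetilde\gamma)=q_i^{\C^2}(\ell(\gamma),\ell(\widetilde\gamma))$.

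That equality — the last assertion — is therefore the crux, and I expect it to be the main obstacle. The inequality $q^X_i(\gamma,\widetilde\gamma)\le q_i^{\C^2}(\ell(\gamma),\ell(\widetilde\gamma))$ is the easy direction: $\ell$ is a holomorphic map, hence Lipschitz for the inner metrics (it does not increase distances beyond a constant factor since it is the restriction of a linear projection), so $d_i\big(\ell(\gamma)\cap S_\epsilon,\ell(\widetilde\gamma)\cap S_\epsilon\big)\lesssim d_i\big(\gamma\cap S_\epsilon,\widetilde\gamma\cap S_\epsilon\big)$, giving $q_i^{\C^2}\le q_i^X$ — wait, one must be careful with the direction of the inequality on exponents: a smaller distance means a larger exponent, so Lipschitzness of $\ell$ gives $q_i^{\C^2}(\ell(\gamma),\ell(\widetilde\gamma))\le q_i^X(\gamma,\widetilde\gamma)$. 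The genuinely hard inequality is the reverse: one must produce, for the pair $\gamma,\widetilde\gamma$ upstairs, a path realizing contact no worse than downstairs. Here I would invoke genericity of $\ell$: the key input (going back to Teissier and used in \cite{NeumannPedersenPichon2018}) is that a generic projection is a \emph{local isometry onto its image up to bilipschitz} away from the polar curve, and that the polar curve, being supported over the $\cal P$-nodes, does not interfere with the region $\pi(\cal N(E_v))$ through which the optimal path runs (this is exactly why the factorization through the Nash transform and the genericity of $\ell$ with respect to $\pi$ are hypotheses). Thus one lifts an optimal path downstairs to a path upstairs of comparable length, yielding $q_i^X(\gamma,\widetilde\gamma)\le q_i^{\C^2}(\ell(\gamma),\ell(\widetilde\gamma))$ and hence equality. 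Assembling: existence and well-definedness from the resolution picture, arithmetic and lower bound from the plane-curve computation downstairs, and the bridge between them from the bilipschitz behaviour of a generic projection.
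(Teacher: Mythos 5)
Your overall strategy — reduce to the plane via the generic projection $\ell$, using the fact that $\ell$ is bilipschitz away from its polar curve — is the same one the paper employs. But there is a genuine gap at the step you yourself flag as the crux, and it is not just a matter of tightening: the argument as written would fail when $v$ is a $\cal P$-node.

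You assert that ``the polar curve, being supported over the $\cal P$-nodes, does not interfere with the region $\pi(\cal N(E_v))$ through which the optimal path runs.'' This is false precisely when $E_v$ is one of the components hit by the strict transform $\Pi^*$ (that is, when $v$ is a $\cal P$-node): in that case $\Pi^*$ meets $E_v$, so the region $\cal N(E_v)$ is \emph{not} disjoint from the locus where $\ell$ fails to be bilipschitz, and you cannot apply \cite[Proposition 3.3]{BirbrairNeumannPichon2014} on all of $\cal N(E_v)$. The paper deals with this by choosing the curvette $\gamma^*$ through a point $p\in E_v$ that avoids the finitely many points of $\Pi^*\cap E_v$, then shrinking to a disc $D\ni p$ with $D\cap\Pi^*=\emptyset$ on which the bilipschitz estimate applies. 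This yields the identity $q_i^X(\gamma,\widetilde\gamma)=q_i^{\C^2}(\ell\gamma,\ell\widetilde\gamma)$ only for curvettes $\widetilde\gamma^*$ whose base point lies in $D$. Two further steps, entirely absent from your sketch, are then required: first, one must show that this local contact exponent does not depend on the chosen point $p$ (the paper's Claim 2, proved by comparing via the common exceptional curve $C_\nu$ created downstairs); second, one must pass from nearby curvettes to arbitrary pairs meeting $E_v$ at distinct points, which the paper does by a chain argument through finitely many intermediate curvettes combined with a minimax inequality. Without these, the invariance of $q_i^X(\gamma,\widetilde\gamma)$ in $\gamma^*$ and $\widetilde\gamma^*$ is not established. (There is also a sign confusion in your treatment of the ``easy direction'': Lipschitzness of $\ell$ gives $d_i\bigl(\ell\gamma\cap S_\epsilon,\ell\widetilde\gamma\cap S_\epsilon\bigr)\lesssim d_i\bigl(\gamma\cap S_\epsilon,\widetilde\gamma\cap S_\epsilon\bigr)$, hence $q_i^{\C^2}\geq q_i^X$, whereas your final statement of the conclusion has the opposite sign — but this is a bookkeeping error, not a conceptual one.)
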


\begin{definition} \label{definition:inner_rate}
	We denote by $q_{v}$ the rational number $q^X_i(\gamma, \widetilde{\gamma})$, and call it the {\it inner rate} of $v$.
\end{definition}

\begin{remark} \label{remark:maxmin} 
It is worth noticing that the knowledge of all the inner rates $q_v$ allows one to compute the inner contact between any two complex curve germs $(\gamma,0)$ and $(\gamma',0)$ on $(X,0)$. 
Indeed, assume that the good resolution $\pi\colon X_\pi\to X$ of $(X,0)$ also resolves the complex curve $\gamma\cup\gamma'$ and let $v$ and $v'$ be the vertices of $\Gamma_{\pi}$ such that $\gamma^* \cap E_{v} \neq \emptyset$ and ${\gamma'}^* \cap E_{v'} \neq \emptyset$ respectively.
Then $q^X_{i}(\gamma, \gamma')= q_{v,v'}$, where $q_{v,v'}$ is the maximum, taken over all injective paths $\gamma$ in $\Gamma_\pi$ between $v$ and $v'$, of the minimum of the inner rates of the vertices of $\Gamma_\pi$ contained in $\gamma$.
We refer to \cite[Proposition 15.3]{NeumannPedersenPichon2018} for details.
\end{remark}

\begin{proof}[Proof of Lemma \ref{lemma_definition_inner_rate}] In the course of the proof we will use the {outer contact}   between $\gamma$ and $\gamma'$, which  is the rational number $q_o=q_o(\gamma, \gamma')$  defined by 
$d_o \big(\gamma \cap S_{\epsilon}, \gamma' \cap S_{\epsilon}\big) = \Theta(\epsilon^{q_o})$, where  $d_o$ is the {outer distance} $d_o(x,y) = || x-y||_{\C^n}$. 
It is simple to see that $q_o$ can also be defined by $d_o \big(\gamma \cap \{ z = \epsilon\}, \gamma' \cap \{ z = \epsilon\}\big) = \Theta(\epsilon^{q_o})$, whenever $z\colon (X,0)\to(\C,0)$ is a generic linear form on $(X,0)$.

First observe that in the smooth case the result comes from classical theory of plane curves singularities. 
Indeed, in $\C^2$ the inner and outer metrics coincide, and if $C_i$ is an exceptional component of a composition of blowups of points $\sigma \colon Y \to \C^2$ starting with the blowup of the origin, then for every pair of  distinct curves  $(\delta,0)$ and $(\delta',0)$ whose strict transforms by $\sigma$ meet $C_i$ at distinct smooth points of $\sigma^{-1}(0)$, the contact $q^{\C^2}_i(\delta,\delta') = q_o(\delta,\delta') $ coincides with the contact exponent between their Puiseux series (see for example \cite[page~401]{GarciaBarrosoTeissier1999}) and does not depend on the choice of the pair $\delta, \delta'$.  
 
Let us now focus on the general case. 
Denote by $p$ the point of $E_{v}$ where $\gamma^*$ passes through and consider coordinates $(z_1,\ldots, z_n)$ of $\C^n$ such that $\ell|_X\colon (z_1,\dots,z_n)\to  (z_1, z_2)$ is a generic projection with respect to $\pi$ and such that the strict transform $\Pi^*$ of the polar curve $\Pi$ of $\ell$ by $\pi$ does not pass through $p$, nor do the strict transforms of the curves $X\cap \{z_i=0\}$ for all $i=1,\ldots,n$. 
Choose local coordinates $(u_1,u_2)$ centered at $p$ such that $E_{v}$ has local equation $u_1=0$, $\gamma^*$ has local equation $u_2=0$, and such that $(z_1 \circ \pi)(u_1,u_2) =  u_1^{m_v}$. 
One can then express the other coordinates in terms of $(u_1,u_2)$ as follows: 
\begin{equation}\label{eqn:local_coordinates_compute_inner_rate}
( z_2  \circ \pi)(u_1,u_2)  =  u_1^{m_v} f_{2,0}(u_1) + u_1^{m_vq_2} \sum_{j \geq 1 } u_2^j f_{2,j}(u_1),
\end{equation}
and, for $i=3,\ldots, n$,
\[
(z_i  \circ \pi)(u_1,u_2)  =  u_1^{m_v} f_{i,0}(u_1) + u_1^{m_vq_i} \sum_{j \geq 1 } u_2^j f_{i,j}(u_1),
\]
for suitable choices of $ f_{i,j}(u_1) \in \C\{u_1\}$ and $1\le q_i \in \Q$.
Without loss of generality, we can assume that the $q_i$ are the biggest rational numbers such that expressions as above exist, and replacing $z_2$ by a generic combination of the functions  $z_1,\ldots, z_n$ we can assume that $q_i \geq q_2 \geq 1$ for all $i \geq 3$.  
We will prove the following claim:
\vskip0,1cm\noindent
{\bf Claim 1.} For any curvette  $\widetilde{\gamma}^*$ of $E_v$ meeting $E_v$ at a point distinct from $p$, we have $q_i^X(\gamma, \widetilde{\gamma}) = q_2$. 
\vskip0,1cm
Denote by $\gamma^*_t$ the curvette of $E_{v}$ defined by the equation $u_2=t$ (so that in particular we have $\gamma_0 = \gamma$), set $\gamma_t=\pi(\gamma^*_t)$, and let $D$ be a  disc neighborhood of $p$ in $E_{v}$ which is contained in a neighborhood on which the local coordinates $(u_1,u_2)$ are defined. 
Then for every $t \in D \setminus\{0\}$ we have  
\[
d_o\big(\gamma \cap \{ z_1 = \epsilon\},  {\gamma}_t  \cap \{ z_1 = \epsilon\}\big) = \Theta (\epsilon^{q_2}).
\]
Now, for every $t \in D \setminus\{0\}$, replacing ${\gamma}_t$ by any  other curvette $\widetilde{\gamma}^*$ passing through $(u_1,u_2)=(0,t)$ still gives 
\[  
d_o\big(\gamma \cap \{ z_1 = \epsilon\}, \widetilde{\gamma} \cap \{ z_1 = \epsilon\}\big) = 
\Theta (\epsilon^{q_2}),
\]
since $ \widetilde{\gamma}^*$ is defined by a parametrization of the form $u_2=t + h.o.$, where \emph{h.o.} denotes a sum of higher order terms.
In particular, we have $q_o(\gamma, \widetilde{\gamma}) =  q_2$.

On the other hand, let $\ell \colon (X,0) \to (\C^2,0)$ be a generic projection for $(X,0)$.  
By \cite[Proposition 3.3]{BirbrairNeumannPichon2014}, the local bilipschitz constant of the cover  $\ell$ is bounded outside $\pi(W)$, where $W$ is any analytic neighborhood of $\Pi^*$ in $X_\pi$. 
Since $\Pi^*$ does not pass through $p$, we can take $D$ and $W$ small enough that $D \cap W = \emptyset$. 
Therefore, the local bilipschitz constant of $\ell$ being bounded on $\bigcup_{t \in D} \gamma_t$, we have $q^X_i(\gamma, \widetilde{\gamma}) = q_i\big( \ell(\gamma), \ell(\widetilde{\gamma})\big)$   as long as $\widetilde\gamma^*$ passes through a point of $E_{v} \cap D$. 

Since the coincidence exponent between the curves $\ell(\gamma)$ and $\ell(\widetilde{\gamma})$  in $(\C^2,0)$ equals $q_2$, we deduce that $q^X_i(\gamma, \widetilde{\gamma}) =q_o\big( \ell(\gamma), \ell(\widetilde{\gamma})\big) = q_2$. 
This proves that $q^X_i(\gamma, \widetilde{\gamma}) =q_2$ does not depend on the choice of the curvette $\widetilde{\gamma}^*$ providing $\widetilde{\gamma}^* \cap E_{v}$  is in the neighborhood $D$ of $\gamma^*$. 

We now have to prove that $q_2$ does not depend on the point $p$, and that $q^X_i(\gamma, \widetilde{\gamma}) =q_2$ for any pair of curvettes $\gamma^*$ and $\widetilde{\gamma}^*$ meeting $E_{v}$ at distinct points.  
 
\vskip0,1cm \noindent
 {\bf Claim 2.} The contact order $q_2$ does not depend on the point $p$. 
 \vskip0,1cm 
Let $p'$ be another  smooth point of $\pi^{-1}(0)$ on $E_{v}$,  let $\delta^*$ be a curvette of $E_{v}$ through $p'$ and let ${\widetilde{\delta}}^*$ be a neighbor curvette. 
Let $\delta $ and $\widetilde{\delta}$ be the images through $\pi$ of $\delta^*$ and $\widetilde{\delta}^*$ respectively. 
Let $q_2(p')$ be the rate $q_2$ obtained as above by taking local coordinates centered at $p'$ instead of $p$. 
We can assume without loss of generality that the projection $\ell$ is also generic for these coordinates in the sense above. 
Then we have $q_o\big(\ell(\delta),\ell( \widetilde{\delta})\big) = q_2(p')$.  
Consider the minimal sequence of blowups  $\sigma \colon Y \to \C^2$  such that one of the  irreducible components $C$ of $\sigma^{-1}(0)$ corresponds to the valuation $\widetilde{\ell}(v)$ (where $\widetilde{\ell}$ is defined in Section \ref{subsec:projection}).  
Then the strict transforms of the four curves  $\gamma$, $\widetilde{\gamma}$,  $\delta$ and $\widetilde{\delta}$ intersect $C$ at four distinct points of $C$ which are smooth points of $\sigma^{-1}(0)$. 
Therefore, we have $q_2 = q_o(\gamma, \widetilde{\gamma}) = q_o(\delta, \widetilde{\delta}) = q_2(p')$. 
This proves Claim 2.

Let us now take  any  pair of curvettes $\gamma^*$ and  $\widetilde{\gamma}^*$ meeting $E_{v}$ at distinct points $p$ and $\widetilde{p}$. 
Since the contact of $\gamma$ with the $\pi$-image of any neighbor curvette equals $q_2$, we then have $q_i^X(\gamma,\widetilde{\gamma}) \leq q_2$. 
Moreover,  by compactness of $E$, we can choose a finite sequence of smooth points $p_1, p_2, \ldots, p_s$  of $\pi^{-1}(0)$ on $E_{v}$, such that $p_1=p$, $p_s=\widetilde{p}$ and for all $i=1,\ldots, s-1$,  $q^X_i(\gamma_i, \gamma_{i+1}) =  q_2$ where $\gamma_i^*$ and $\gamma_{i+1}^*$ pass through $p_i$ and $p_{i+1}$ respectively.  
We then have
\[
q_i^X(\gamma,\widetilde{\gamma})  \geq \min_{i=1,\ldots,s-1} \big(q_i^X(\gamma_i,\gamma_{i+1}) \big) = q_2.
\]
Putting this all together, we deduce that $q_i^X(\gamma,\widetilde{\gamma}) = q_2$.

Finally, observe that $q^X_i(\gamma, \widetilde{\gamma})$ only depends on $v$ and not on $E_{v}$, since all the computations performed above are unchanged if we first blowup a closed point of $E_{v}$.
\end{proof}

\begin{remark} 
The second claim in the proof of Lemma~\ref{lemma_definition_inner_rate} can also be proved by a computation using local coordinates in the resolution as follows.
Let $p'$ be another smooth point of $\pi^{-1}(0)$ on $E_{v}$. 
By the genericity of $z_1$ and $z_2$, we can assume that the normal form of equation \eqref{eqn:local_coordinates_compute_inner_rate} is also valid at $p'$, that is, there exists a coordinate system $(u'_1,u'_2)$ centered at $p'$ such that:
\[
\begin{aligned}
( z_1  \circ \pi)(u_1',u_2')  &=  u_1'^{m_v},\\
( z_2  \circ \pi)(u_1',u_2')  &=  u_1'^{m_v} g_{2,0}(u_1') + u_1'^{m_vq_v(p')} \sum_{j \geq 1 } u_2'^j g_{2,j}(u_1').
\end{aligned}
\]
Observe  that:
\[
(dz_1\circ \pi) \wedge (dz_2 \circ \pi) = u_1'^{m_v(1+q_v(p'))-1}\Big( \sum\nolimits_{j \geq 1 } j u_2'^{j-1} g_{2,j}(u_1')\Big) du'_1\wedge du'_2.
\]
We can interpret the exponent $m_v\big(1+q_v(p')\big)-1$ as the maximal order of the exceptional divisor that factors through $(dz_1\circ \pi) \wedge (dz_2 \circ \pi)$. 
Since this order is independent of the point $p' \in E_v$, we conclude that $q_v(p)=q_v(p')$ for every point $p'\in E_v$.
\end{remark}

The following result is what allows us to compute in a simple way the inner rate of any divisorial valuation of $(\C^2,0)$, and more generally that of any divisorial valuation of a singular germ $(X,0)$ if we know the inner rates of the vertices of the dual graph $\Gamma_\pi$ of a suitable good resolution $\pi$ of $(X,0)$.

\begin{lemma}\label{lem:inner rate } 
Let $\pi \colon X_\pi \to X$ be a good resolution of $(X,0)$ that factors through the blowup of the maximal ideal and through the Nash transform of $(X,0)$. 
Let $p$ be a point of the exceptional divisor $\pi^{-1}(0)$ and let $E_w$ be the exceptional component created by the blowup of $X_\pi$ at $p$. 
Then:
\begin{enumerate}
\item \label{lem:inner rate  SMOOTH PT}
 If $p$ is a smooth point of $\pi^{-1}(0)$ and $E_v$ is the irreducible component of $\pi^{-1}(0)$ on which $p$ lies, then
 \[
 m_w=m_v  \;  \hbox{ and }  \;  q_w = q_v + \frac{1}{m_v}.
 \]
\item \label{lem:inner rate  SINGULAR PT}
 If $p$ lies on the intersection of two irreducible components $E_v$  and $E_{v'}$ of $\pi^{-1}(0)$, then
 \[
 m_w=m_v + m_{v'} \; \hbox{ and } \; q_w = \frac{q_v m_v + q_{v'}m_{v'}}{m_v + m_{v'}}.
 \]
\end{enumerate}
\end{lemma}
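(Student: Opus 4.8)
The plan is to express both the multiplicity $m_w$ and the inner rate $q_w$ in terms of how two natural divisors on $X_\pi$ behave under the blowup $\beta\colon X_{\pi'}\to X_\pi$ of the point $p$, where $\pi'=\pi\circ\beta$: the total transform of the maximal ideal, and the canonical divisor. I would note first that, as $\pi'$ dominates $\pi$, it is again a good resolution factoring through the blowup of $\mathfrak M$ and through the Nash transform of $(X,0)$, so the invariants $m_w,q_w$ are defined and may be computed via Lemma~\ref{lemma_definition_inner_rate} and the Remark following its proof.

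For the multiplicities, since $\pi$ factors through the blowup of the maximal ideal we have $\mathfrak M\cdot\mathcal O_{X_\pi}=\mathcal O_{X_\pi}(-D)$ for an effective divisor $D$ supported on $\pi^{-1}(0)$ whose coefficient along each exceptional component is the multiplicity of that component (Example~\ref{example_divisorial_valuation}); in particular, near $p$ we have $D=m_vE_v$ in the first case and $D=m_vE_v+m_{v'}E_{v'}$ in the second. Then $\mathfrak M\cdot\mathcal O_{X_{\pi'}}=\mathcal O_{X_{\pi'}}(-\beta^*D)$, and since $\beta^*E_v=\widetilde{E_v}+E_w$ near $p$ (and likewise for $E_{v'}$), the coefficient of $E_w$ in $\beta^*D$ — which by definition is $m_w$ — equals $m_v$ in the first case and $m_v+m_{v'}$ in the second, as claimed.

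The inner rate formulas follow from the same circle of ideas once one knows how to read $q_v$ off a divisor on $X_\pi$. I would fix a generic projection $\ell=(z_1,z_2)\colon(X,0)\to(\C^2,0)$ that is generic with respect to $\pi$ and such that the strict transform $\Pi^*$ of its polar curve does not pass through $p$; this is an open dense condition on $\ell$, exactly as in the proof of Lemma~\ref{lemma_definition_inner_rate} (and in the second case of the statement it is automatic, since $\Pi^*$ meets $\pi^{-1}(0)$ only at smooth points). Set $J:=d(z_1\circ\pi)\wedge d(z_2\circ\pi)$; this is a holomorphic $2$-form on $X_\pi$, its divisor $\mathrm{div}(J)$ is effective with non-exceptional part $\Pi^*$, and by the Remark following Lemma~\ref{lemma_definition_inner_rate} its coefficient along $E_v$ equals $r_v:=m_v(1+q_v)-1$ (and along $E_{v'}$, in the second case, it equals $r_{v'}:=m_{v'}(1+q_{v'})-1$). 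Since $\Pi^*$ avoids $p$, the divisor $\mathrm{div}(J)$ is supported on $E_v$ near $p$ in the first case, and on $E_v\cup E_{v'}$ near $p$ in the second. Now $d(z_1\circ\pi')\wedge d(z_2\circ\pi')$ equals $\beta^*J$, and, since $\beta$ is the blowup of a point of the smooth surface $X_\pi$, one has $\mathrm{div}(\beta^*J)=\beta^*\mathrm{div}(J)+E_w$ (this is the relation $K_{X_{\pi'}}=\beta^*K_{X_\pi}+E_w$). Combining this with $\beta^*E_v=\widetilde{E_v}+E_w$ (and likewise for $E_{v'}$), the coefficient of $E_w$ in $\mathrm{div}(\beta^*J)$ is $r_v+1$ in the first case and $r_v+r_{v'}+1$ in the second. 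On the other hand, because $\Pi^*$ avoids $p$ the polar curve of $\ell$ does not meet $E_w$, so the Remark applies equally to $E_w$ and this coefficient must also equal $m_w(1+q_w)-1$. Equating the two expressions and plugging in the values of $m_w$ found above gives
\[
m_v(1+q_w)-1=m_v(1+q_v)-1+1 \quad\text{and}\quad (m_v+m_{v'})(1+q_w)-1=m_v(1+q_v)+m_{v'}(1+q_{v'})-1
\]
in the two cases, which simplify to $q_w=q_v+1/m_v$ and $q_w=(q_vm_v+q_{v'}m_{v'})/(m_v+m_{v'})$ respectively.

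The only substantive ingredient here is the identity $\mathrm{ord}_{E_v}(J)=m_v(1+q_v)-1$ furnished by the Remark, which converts the inner rate into the order of vanishing of an explicit $2$-form along $E_v$; granting it, the lemma is bookkeeping with pullbacks of divisors and the relative canonical of a point blowup. If one wished to avoid this input, the first case could instead be settled by a short direct computation in the normal coordinates of \eqref{eqn:local_coordinates_compute_inner_rate} (using that $f_{2,1}$ there is a unit, which again encodes that $\Pi^*$ avoids $p$), but the second case would then require a more delicate Newton--Puiseux-type analysis relating the local structure of $\ell$ at the node $p$ to the inner rates read off at smooth points of $E_v$ and $E_{v'}$ — precisely the subtlety this argument bypasses.
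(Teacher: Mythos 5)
Your proof is correct, and it takes a genuinely different route from the paper's. The paper proves both parts by direct local computation in the normal coordinates of equation~\eqref{eqn:local_coordinates_compute_inner_rate}: case~\ref{lem:inner rate  SMOOTH PT} is a short blowup-in-coordinates calculation, but case~\ref{lem:inner rate  SINGULAR PT} requires a considerably more delicate analysis at the node (decomposing the Taylor expansion according to the sublattice $\Lambda$, an explicit change of variables to compare with the smooth-point normal form, and a final check that the ``residual'' series $h_2$ is a unit via the Nash transform hypothesis). By contrast, you package all the local information into a single identity --- that $\ord_{E_v}\big(d(z_1\circ\pi)\wedge d(z_2\circ\pi)\big)=m_v(1+q_v)-1$, which the paper establishes in the remark after Lemma~\ref{lemma_definition_inner_rate} --- and then treat both cases uniformly by the transformation rules for divisors and for the canonical under a point blowup: $\beta^*E_v=\widetilde{E_v}+E_w$ and $K_{X_{\pi'}}=\beta^*K_{X_\pi}+E_w$. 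This is exactly the birational point of view that Remark~\ref{remark:discrepancies} gestures towards: the quantity $m_v(1+q_v)-1$ is (up to normalization) the Mather log discrepancy of $v$ computed through the generic projection, and once the lemma is rephrased in these terms it reduces to the standard compatibility of log discrepancies with blowups. What your argument buys is uniformity and conceptual transparency --- the two cases collapse into one line of divisor bookkeeping, and the structural reason for the formulas becomes visible. What the paper's argument buys is self-containedness: it does not rely on the remark's identity (indeed that remark is offered as an alternative proof of Claim~2, not as a tool for Lemma~\ref{lem:inner rate }), and its local computation is what actually \emph{establishes} the identity you invoke. You flag this dependency honestly, which is the right thing to do; the only small caveat worth adding is that you should make explicit that the remark's formula is being applied to $E_w$ as a component of the exceptional divisor of the \emph{larger} resolution $\pi'$, which is legitimate precisely because $\pi'$ still factors through the blowup of the maximal ideal and the Nash transform and because the choice of $\ell$ with $\Pi^*$ avoiding $p$ ensures $\ell$ remains generic with respect to $\pi'$.
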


\begin{proof} 
Assume first that $p$ is a smooth point of $\pi^{-1}(0)$. We use again the notations of the proof of Lemma \ref{lemma_definition_inner_rate}: in local coordinates $(u_1,u_2)$ centered at $p$, we have $(z_1 \circ \pi)(u_1,u_2) =  u_1^{m_v}$ and 
\[
( z_2  \circ \pi)(u_1,u_2)  =  u_1^{m_v} f_{2,0}(u_1) + u_1^{m_vq_2} \sum_{j \geq 1 } u_2^j f_{2,j}(u_1).
\]
Let us prove that $f_{2,1}(0) \neq 0$. 
Let $\Pi$ be the polar curve of $\ell$. Its total transform   $\pi^{-1}(\Pi)$  by $\pi$ is the critical locus of $\ell \circ \pi$. 
The Jacobian matrix of $\ell \circ \pi$ is: 
\[
\left(
\begin{array}{cc}
m_v u_1^{m_v-1}  &0     \\
\ast &      u_1^{m_v q_2}\big( f_{2,1}(u_1) + 2u_2 f_{2,2}(u_1) + \cdots  \big)
\end{array}
\right)
\]
thus $\pi^{-1}(\Pi)$ has equation $m_v u_1^{m_v + m_v q_2-1}  \big( f_{2,1}(u_1) + 2u_2 f_{2,2}(u_1) + \cdots  \big)=0$ and the strict transform $\Pi^*$ has equation  $  f_{2,1}(u_1) + 2u_2 f_{2,2}(u_1) + \cdots =0$. 
Since $\Pi^*$ does not pass through $p$, this implies that $f_{2,1}(0) \neq 0$.   

Let $e_p$ be the blowup of $X_\pi$ at $p$. 
In the coordinates chart $(u'_1,u'_2) \mapsto (u'_1,u'_2u'_1)$, we have $(z_1 \circ \pi \circ e_p)(u'_1,u'_2) = {u'_1}^{m_v}$ and 
\[
( z_2  \circ \pi \circ e_p)(u'_1,u'_2)  =  {u'_1}^{m_v} f_{2,0}(u'_1) + {u'_1}^{m_vq_2 + 1} \sum_{j \geq 1 } {u'_2}^j {u'_1}^{j-1} f_{2,j}(u'_1).
\]
Therefore $m_w = m_v$. 
Fixing $u'_2 \neq 0$ and comparing with the equation~\eqref{eqn:local_coordinates_compute_inner_rate} of the proof of Lemma \ref{lemma_definition_inner_rate}, we obtain $m_w q_w = m_vq_2 + 1$ by using Claim 1 of the proof of Lemma \ref{lemma_definition_inner_rate}. 
This proves \ref{lem:inner rate  SMOOTH PT}. 

Assume now that $p$  is an intersecting point between two irreducible components $E_v$  and $E_{v'}$. 
In local coordinates $(u_1,u_2)$ centered at $p$, we can assume without loss of generality that $(z_1 \circ \pi)(u_1,u_2) =  u_1^{m_v}u_2^{m_{v'}}$ and $ ( z_i  \circ \pi)(u_1,u_2)    \in \C\{u_1, u_2\}$; in particular, we consider their Taylor expansion
\[
( z_i  \circ \pi)(u_1,u_2) = \sum_{\alpha \in \mathbb{N}^2} T_{i\alpha} u_1^{\alpha_1}u_2^{\alpha_2}.
\]
Now, let $\Lambda =\big\{ \alpha \in \mathbb{N}^2\,\big|\, \exists q\in \mathbb{Q} \text{ such that } q\cdot \alpha = (m_v,m_{v'}) \big\}$. Since $( z_i  \circ \pi)(u_1,u_2)$ are convergent power series, a sub-series is also convergent, and therefore
\[
\widetilde{g}_i(u_1,u_2) = \sum_{\alpha \in \Lambda} T_{i\alpha} u_1^{\alpha_1}u_2^{\alpha_2} 
\]
is an analytic function. 
Furthermore, we know that $u_1^{m_v}u_2^{m_{v'}}$ divides $( z_i  \circ \pi)(u_1,u_2)$, so $\widetilde{g}_i(u_1,u_2) = u_1^{m_v}u_2^{m_{v'}} g_i(u_1,u_2)$ and:
\[
( z_i  \circ \pi)(u_1,u_2) = u_1^{m_v}u_2^{m_{v'}} g_i(u_1,u_2) +  u_1^{b_v}u_2^{b_{v'}}h_i(u_1,u_2)
\]
where $b_v \geq m_v$, $b_{v'}\geq m_{v'}$ and, without loss of generality, $h_2(u_1,u_2)$ is not identically zero over $E_{v}$ and $E_{v'}$. 
Now consider a point $(0,a_2)$ (the same computation works for $(a_1,0)$) and let us compare this Taylor expansion with the normal form given in equation \eqref{eqn:local_coordinates_compute_inner_rate}. 
Consider the analytic change of coordinates:
\[
u = u_1(u_2 - a_2)^{\frac{m_{v'}}{m_{v}}}, \quad v= u_2
\]
which is centered at $(0,a_2)$ and note that $(z_1 \circ \pi)(u_1,u_2) =  u^{m_v}$. 
Furthermore, it follows from direct computation and the definition of $\Lambda$ that:
\[
u_1^{m_v}u_2^{m_{v'}} g_i(u_1,u_2) = u^{m_v}g_i(u)
\]
so, these terms contribute only to the terms $f_{i0}$ of the normal form. 
Furthermore, if $\alpha \in \mathbb{N} \setminus \Lambda$, then:
\[
u_1^{\alpha_1}u_2^{\alpha_2} = u^{\alpha_1} \left(v-a_2 \right)^{\frac{\alpha_2 m_{v'} - \alpha_1 m_v}{m_{v'}}} = u^{\alpha_1}U_{\alpha}(u,v)
\]
where $U_{\alpha}(u,v)$ is a non-constant unit whose derivative in respect to $v$ is non-zero. 
By comparing the normal forms, it follows, that $b_{v} = m_v q_{v}$ (and by the analogous argument, that $b_{v'}= m_{v'} q_{v'}$), which yields to:
\[
( z_i  \circ \pi)(u_1,u_2) = u_1^{m_v}u_2^{m_{v'}} g_i(u_1,u_2) +  u_1^{ m_v q_{v}}u_2^{m_{v'} q_{v'}}h_i(u_1,u_2)
\]
Now, since $\pi$ factors through the Nash transform of $(X,0)$, we can suppose without loss of generality that the polar curve in respect to $(z_1,z_2)$ does no pass through $p$. 
This implies that the following $2$-form has support in the exceptional divisor:
\[
d ( z_1 \circ \pi) \wedge d(z_2 \circ \pi) = d(u_1^{m_v}u_2^{m_{v'}})\wedge d\big(u_1^{q_vm_v}u_2^{q_{v'}m_{v'}}h_2(u_1,u_2)\big)
\]
Denoting by $\partial = m_{v'} u_1 \partial_{u_1} - m_{v}u_2\partial_{u_2}$, we get that:
\[
d ( z_1 \circ \pi) \wedge d(z_2 \circ \pi) =u_1^{m_v-1}u_2^{m_{v'}-1} \partial\big(u_1^{q_vm_v}u_2^{q_{v'}m_{v'}}h_2(u_1,u_2)\big)  du_1 \wedge du_2
\]
this implies that $h_2$ must be a unit at $p$. 
Then part \ref{lem:inner rate  SINGULAR PT} of the Lemma follows from a simple direct computation.
\end{proof}

\begin{example} \label{example:E8_inner_rate}  
	Let us consider again the singularity $X=E_8$ of example \ref{example:E8_graph}. 
	The inner rates on the dual embedded resolution graphs $\Gamma_\sigma$ and $\Gamma_{\sigma'}$ of the discriminant curve $\Delta$ can be easily computed as contact exponent between Puiseux series of neighbor curvettes, or simply using Lemma~\ref{lem:inner rate }.
	They are then lifted to the graph $\Gamma_\pi$ via $\widetilde\ell$ thanks to Lemma~\ref{lemma_definition_inner_rate}, since $\ell$ is generic with respect to $\pi$. 
	This proves that the inner rate function on the vertices of $\Gamma_{\pi}$ coincides with the function introduced in Example \ref{example:E8_Laplacien} and depicted in Figure~\ref{fig:inner rates E_8}.  
\begin{figure}[h] 
		\centering
		\begin{tikzpicture}  
		
		\draw[thin ](-2,0)--(4,0);
		\draw[thin ](0,0)--(0,1);
		\draw[fill ] (-2,0)circle(2pt);
		\draw[fill ] (-1,0)circle(2pt);
		\draw[fill ] (0,0)circle(2pt);
		\draw[fill ] (1,0)circle(2pt);
		\draw[fill ] (2,0)circle(2pt);
		
		\draw[fill ] (3,0)circle(2pt);
		
		\draw[fill ] (4,0)circle(2pt);
		
		\draw[fill ] (0,1)circle(2pt);

		\node(a)at(4,0.4){ $\mathbf 1$};
		\node(a)at(3,0.4){ $\frac{\mathbf 4}{\mathbf 3}$};
		\node(a)at(2,0.4){ $ \frac{\mathbf 3}{\mathbf 2}$};
		\node(a)at(1,0.4){ $ \frac{\mathbf 8}{\mathbf 5}$};
		\node(a)at(0.2,0.4){$   \frac{\mathbf 5}{\mathbf 3}$};
		\node(a)at(-1,0.4){$   \frac{\mathbf 7}{\mathbf 4}$};
		\node(a)at(-2,0.4){ $\mathbf 2$};
		\node(a)at(0.3,1){ $\mathbf 2$};
		
		\node(a)at(-2,-0.4){$v_6$};
		\node(a)at(-1,-0.4){$v_5$};
		\node(a)at(0,-0.4){$v_4$};
		\node(a)at(1,-0.4){$v_3$};
		\node(a)at(2,-0.4){$v_2$};
		\node(a)at(3,-0.4){$v_1$};
		\node(a)at(4,-0.4){$v_0$};
		\node(a)at(-0.4,1){$v_7$};

		\end{tikzpicture} 
		\caption{The inner rates of the vertices of the graph $\Gamma_\pi$}\label{fig:inner rates E_8}
	\end{figure}
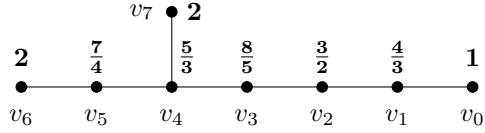
\end{example}

The starting point of our study of inner rates via potential theory on dual graphs is the following result, which states that the inner rates extend to a continuous function, and that it is piecewise-linear with respect to the metric defined in Section~\ref{subsection_preliminaries_laplacians}.

\begin{lemma} \label{lemma_inner_rate_function_NL}
There exists a unique continuous function
\[
\cal I_X \colon \NL(X,0) \to  \R_{\geq 1} \cup \{\infty\}
\]
such that $\cal I_X (v) = q_{v}$ for every divisorial point $v$ of $\NL(X,0)$.
If $\pi$ is a good resolution of $(X,0)$ that factors through the blowup of the maximal ideal and the Nash transform of $(X,0)$, then $\cal I_X $ is linear on the edges of $\Gamma_{\pi}$ with integral slopes.
\end{lemma}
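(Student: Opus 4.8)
The plan is to build the function $\cal I_X$ on the dense set of divisorial valuations using the recursive formulas from Lemma~\ref{lem:inner rate }, check that these formulas force the claimed linearity on the edges of any dual graph, and then invoke the density of divisorial valuations together with the universal-dual-graph structure $\NL(X,0)\cong\varprojlim_\pi\Gamma_\pi$ to extend uniquely and continuously to all of $\NL(X,0)$. Concretely, I would first fix a good resolution $\pi$ as in the hypothesis, with dual graph $\Gamma_\pi$, and \emph{define} $\cal I_X$ on $\Gamma_\pi$ to be the function that takes the value $q_v$ at each vertex $v$ and is affine on each edge (with respect to the skeletal metric $\length([v,v'])=1/(m_vm_{v'})$). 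The key point is that this is consistent under refinement: if $\pi'$ is obtained from $\pi$ by blowing up a point $p$ of $\pi^{-1}(0)$, producing a new vertex $w$, then $w$ lies on the edge $e$ of $\Gamma_\pi$ containing it (or at a vertex, in the smooth-point case), and Lemma~\ref{lem:inner rate } tells us exactly that $q_w$ is the value at the corresponding point of the affine interpolation. Indeed, in case~\ref{lem:inner rate  SINGULAR PT} we have $m_w=m_v+m_{v'}$ and $q_w=(q_vm_v+q_{v'}m_{v'})/(m_v+m_{v'})$, which is precisely the value at the point dividing $e=[v,v']$ in the ratio matching $\length([v,w])=1/(m_vm_w)$, $\length([w,v'])=1/(m_wm_{v'})$; in case~\ref{lem:inner rate  SMOOTH PT}, $m_w=m_v$ and $q_w=q_v+1/m_v$, consistent with attaching a new edge of length $1/(m_vm_w)=1/m_v^2$ on which $\cal I_X$ extends the relevant slope. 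Hence the piecewise-affine functions on the graphs $\Gamma_\pi$ are compatible with the isometric inclusions $\Gamma_\pi\hookrightarrow\Gamma_{\pi'}$, and since \emph{every} good resolution that factors through the blowup of the maximal ideal and the Nash transform is dominated by such a $\pi'$, they agree on overlaps.

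Next I would assemble these into a single function on $\NL(X,0)$. Since $\NL(X,0)=\varprojlim_\pi\Gamma_\pi$ and the divisorial valuations are dense, for a general point $w\in\NL(X,0)$ I would set $\cal I_X(w)=\lim_\pi \cal I_X\big(r_\pi(w)\big)$, where $r_\pi$ is the retraction; one must check this limit exists in $\R_{\geq1}\cup\{\infty\}$. Monotonicity is the natural tool here: as $\pi$ refines, $r_\pi(w)$ moves ``towards'' $w$ along the tree-like structure, and on each edge $\cal I_X$ has integral slopes, so the values $\cal I_X(r_\pi(w))$ form a monotone net once one is far enough along the refinement (the retraction points converge monotonically in the partial order of the graphs); their limit is either a finite real number $\geq1$ or $+\infty$. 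Continuity of the resulting $\cal I_X$ then follows from the fact that the topology on $\NL(X,0)$ is the inverse-limit topology: a basic neighborhood of $w$ is the preimage under some $r_\pi$ of a neighborhood in $\Gamma_\pi$, and $\cal I_X$ restricted to $\Gamma_\pi$ is continuous (being piecewise affine) — so the oscillation of $\cal I_X$ on such a neighborhood is controlled, uniformly in finer resolutions, by the integral-slope bound together with the fact that edge lengths shrink under refinement. Uniqueness is immediate: two continuous functions agreeing on a dense set agree everywhere.

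The main obstacle I anticipate is the behavior at non-divisorial, and especially \emph{infinitely singular} or irrational, points of $\NL(X,0)$, where the value may be $+\infty$ and where one must argue carefully that the net $\big(\cal I_X(r_\pi(w))\big)_\pi$ does converge rather than oscillate. This requires knowing that, along any decreasing sequence of retractions converging to $w$, the sequence of integer slopes of $\cal I_X$ encountered is eventually of constant sign (or that the partial sums converge) — this should follow from the tree structure of $\NL(\C^2,0)$ pulled back via the finite branched cover $\widetilde\ell\colon\NL(X,0)\to\NL(\C^2,0)$ of Section~\ref{subsec:projection}, combined with Lemma~\ref{lemma_definition_inner_rate} which says $\cal I_X=\cal I_{\C^2}\circ\widetilde\ell$ on divisorial points (for $\ell$ generic with respect to $\pi$); so ultimately the convergence is reduced to the well-understood structure of the valuative tree, where the analogous statement about the ``$\log$-discrepancy-like'' behavior of contact functions along branches is classical (cf. \cite{FavreJonsson2004}). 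A clean alternative, which I would also consider, is to bypass the explicit limit construction and instead \emph{characterize} $\cal I_X(w)$ directly as the inner contact $q_i^X(\gamma,\gamma')$ of curvettes ``at $w$'' when $w$ is divisorial, extended by continuity — but since Lemma~\ref{lemma_definition_inner_rate} already pins down the divisorial values, the inverse-limit argument above is the most economical route, and the only real work is the monotonicity/convergence lemma.
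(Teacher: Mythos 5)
Your proof takes essentially the same route as the paper's: verify that the inner rates interpolate linearly with integral slopes on each $\Gamma_\pi$ using Lemma~\ref{lem:inner rate } (the consistency under refinement you check against parts~\ref{lem:inner rate  SMOOTH PT} and~\ref{lem:inner rate  SINGULAR PT} is exactly the paper's argument, which in fact only needs part~\ref{lem:inner rate  SINGULAR PT} for linearity on edges), and then pass to $\NL(X,0)$ via the inverse-limit description. The only divergence is one of emphasis: the paper declares the passage from compatible piecewise-linear functions on the $\Gamma_\pi$ to a continuous function on $\NL(X,0)$ immediate, whereas you rightly flag the convergence of the net $\calI_X\big(r_\pi(w)\big)$ at non-quasi-monomial $w$ as the one step needing further justification, and your sketch of why it converges (eventual monotonicity pulled back through $\widetilde\ell$ from the valuative tree) is the right idea even if, as you acknowledge, it is left at the level of a sketch.
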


\begin{proof}
Let $\pi$ be as in the statement.  
We only need to show that the inner rates extend uniquely to a continuous map on $\Gamma_\pi$ which is linear on its edges with integral slopes, as the first part of the statement will then follow immediately from the description of $\NL(X,0)$ as inverse limit of dual graphs.
The fact that the slopes are integer can be verified directly, as on an edge $e=[v,v']$ the slope is $(q_{v'}-q_v)/\length(e)=(q_{v'}-q_v)m_vm_{v'}$, which is an integer by Lemma~\ref{lemma_definition_inner_rate}.
To prove the linearity on the edges, since the subset of $\Gamma_\pi$ consisting of the divisorial points is dense in $\Gamma_\pi$, it is enough to show that the inner rates are linear on this set.
Let $e=[v,v']$ be an edge of $\Gamma_\pi$ corresponding to an intersection point $p$ of two components $E_v$ and $E_{v'}$ of $\pi^{-1}(0)$.
Since any divisorial point of $e$ is associated with a divisor appearing after a finite composition of point blowups centered over $p$, it is sufficient to prove that $\calI_X$ is linear on the set $\{v,v'',v'\}$, where $v''$ is the divisorial valuation associated with the exceptional divisor $E_{v''}$ of the blowup of $X_\pi$ at $p$.
Therefore, all we have to show is that 
\begin{equation*}\label{eq_proof_extension_innerrates}
\frac{\calI_X(v'')-\calI_X(v)}{\length([v,v''])} = \frac{\calI_X(v')-\calI_X(v)}{\length([v,v'])},
\end{equation*}
which follows from the definition of the lengths and from Lemma~\ref{lem:inner rate }.\ref{lem:inner rate  SINGULAR PT}.
\end{proof} 

Let $\pi \colon X_{\pi} \to X$ be a
good resolution of $(X,0)$ 
that factors through the blowup of the maximal ideal and through the Nash transform of $(X,0)$
and let $\ell \colon (X,0) \to (\C^2,0)$ be projection that is generic with respect to $\pi$.
Denote by $p_1,\ldots,p_k$ the points where the strict transform of the polar curve of $\ell$ intersects the exceptional divisor $\pi^{-1}(0)$ of $\pi$.
For each $j=1,\ldots,k$, let $E_{v_j}$ be the exceptional curve created by the blowup of $p_j$.
As an immediate consequence of the description of $\widetilde\ell$ given in Section \ref{subsec:projection} and of the computation of the inner rates of Lemma~\ref{lemma_definition_inner_rate}, we deduce that for every $v \in \NL(X,0)$ we have $\calI_X (v)=\calI_{\C^2}\big( \widetilde{\ell}(v)\big)$ if $v$ does not belong to one of the $k$ connected components of $\NL(X,0)\setminus \Gamma_\pi$ containing one of the $v_j$, and $\calI_X (v) < \calI_{\C^2}\big( \widetilde{\ell}(v)\big)$ otherwise.

In particular, this implies that $\cal I_X$ takes the value 1 precisely on the set of $\cal L$ nodes of $\NL(X,0)$, and is greater than one elsewhere.
Indeed, the set of $\cal L$ nodes of $\NL(X,0)$ coincides with $\widetilde\ell^{-1}(\ord_0)$, where $\ord_0$ is the divisorial valuation associated with the blowup of $\C^2$ at $0$ (that is, $\ord_0$ is the unique $\cal L$-node of $\NL(\C^2,0)$).
We have $\calI_{\C^2}(\ord_0)=1$ immediately from the definition and the fact that the inner rate function $\calI_{\C^2}$ on $\NL(\C^2,0)$ is greater than 1 elsewhere follows for example from Lemma~\ref{lem:inner rate }.
Moreover, the fact that $\calI_X$ can be computed on $\C^2$ and lifted via the finite cover $\widetilde\ell$ imposes strong conditions on the way it grows along paths in $\NL(X,0)$.
The following result, which illustrates this phenomenon, is not needed in the rest of this paper but can be helpful to the reader in order to acquire a better intuition for the behavior of inner rates.
It can be compared to the main result of \cite{MaugendreMichel2017}.

\begin{proposition} \label{proposition:growth_behavior}
Let $v$ be a point of $\NL(X,0)$.
Then there exist an $\cal L $-node $w$ of $\NL(X,0)$ and a path from $w$ to $v$ in $\NL(X,0)$ along which $\cal I_X$ is strictly increasing.
\end{proposition}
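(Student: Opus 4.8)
The statement asserts that from any point $v$ of $\NL(X,0)$ one can reach an $\cal L$-node by a path along which $\cal I_X$ strictly increases. My strategy is to reduce to the smooth case via the generic projection $\widetilde\ell$, prove the analogue there (where $\cal I_{\C^2}$ is well understood on the valuative tree), and transport the path back up.

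First I would fix a good resolution $\pi$ of $(X,0)$ factoring through the blowup of the maximal ideal and the Nash transform, with $v$ a vertex of $\Gamma_\pi$ (the general case follows by refining $\pi$, since $\cal I_X$ is piecewise linear and $\cal L$-nodes of refinements are still $\cal L$-nodes of $(X,0)$ only in the sense that the set of $\cal L$-nodes is the preimage $\widetilde\ell^{-1}(\ord_0)$, so it suffices to work in $\Gamma_\pi$ and then possibly descend to the $\cal L$-node it contains). Let $\nu=\widetilde\ell(v)\in\NL(\C^2,0)$. In the valuative tree $\NL(\C^2,0)$, which is a \emph{tree}, there is a unique injective path from $\ord_0$ (the unique $\cal L$-node) to $\nu$. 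I claim $\cal I_{\C^2}$ is strictly increasing along this path when traversed from $\ord_0$ to $\nu$: indeed $\cal I_{\C^2}$ takes its minimum value $1$ exactly at $\ord_0$, it is piecewise linear with integral slopes, and by Lemma~\ref{lem:inner rate } the slope formulas show that at any divisorial point the inner rate of any divisor obtained by blowing up a point of $E_\nu$ is strictly larger than $q_\nu$ (adding $1/m_\nu$ at a smooth point, or taking a strict weighted average with a strictly larger neighbor at a double point). A short potential-theoretic argument — the Laplacian of $\cal I_{\C^2}$ on any subtree has its unique "source" at $\ord_0$, or equivalently $\cal I_{\C^2}$ has no interior local minimum away from $\ord_0$ — then forces monotonicity along the path away from $\ord_0$.

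Next I transport this to $\NL(X,0)$. Choose the unique $\cal L$-node $w$ of $\NL(X,0)$ (a vertex of $\Gamma_\pi$, after possibly refining) lying in the connected component of $\widetilde\ell^{-1}$ of the path from $\ord_0$ to $\nu$ that "lies over" $\ord_0$ and connects to $v$; concretely, since $\widetilde\ell$ restricted to $\Gamma_\pi$ is a covering of graphs onto its image, lift the injective path from $\ord_0$ to $\nu$ to a path $\tilde\gamma$ in $\Gamma_\pi$ ending at $v$, and let $w$ be its starting point, which maps to $\ord_0$ and is therefore an $\cal L$-node. Along $\tilde\gamma$ we have, by the displayed relation just before Proposition~\ref{proposition:growth_behavior}, that $\cal I_X \le \cal I_{\C^2}\circ\widetilde\ell$ with equality precisely where the path stays outside the finitely many components of $\NL(X,0)\setminus\Gamma_\pi$ containing the exceptional divisors over the polar curve — and on $\Gamma_\pi$ itself we are always outside those components, so in fact $\cal I_X|_{\tilde\gamma} = \cal I_{\C^2}\circ\widetilde\ell|_{\tilde\gamma}$. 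Hence $\cal I_X$ is strictly increasing along $\tilde\gamma$ from $w$ to $v$ because $\cal I_{\C^2}$ is strictly increasing along its image.

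**Main obstacle.** The delicate point is the lifting of the path and the bookkeeping with the components of $\NL(X,0)\setminus\Gamma_\pi$: one must be sure that the chosen lift $\tilde\gamma$ can be taken to lie in $\Gamma_\pi$ (so that the inequality $\cal I_X\le\cal I_{\C^2}\circ\widetilde\ell$ becomes an equality) and that its initial vertex is genuinely an $\cal L$-node — i.e. that the covering $\widetilde\ell|_{\Gamma_\pi}$ really does send some vertex over $\ord_0$ along a path connected to $v$. This is where one uses that $\pi$ factors through the blowup of the maximal ideal, ensuring the $\cal L$-nodes are present as vertices of $\Gamma_\pi$ and that $\widetilde\ell(\Gamma_\pi)$ contains $\ord_0$. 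A secondary subtlety is making the monotonicity statement for $\cal I_{\C^2}$ fully rigorous: rather than invoking potential theory, it is cleanest to argue directly by induction on the length of the injective path from $\ord_0$ to a divisorial vertex, using at each step exactly the two cases of Lemma~\ref{lem:inner rate } to see that passing to the next vertex strictly increases the value, and then extend to all of $\NL(\C^2,0)$ by density and continuity.
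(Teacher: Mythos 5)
Your overall strategy coincides with the paper's: reduce to a divisorial $v$, project by $\widetilde\ell$ to the valuative tree, use the monotonicity of $\cal I_{\C^2}$ along the unique injective path from $\ord_0$ to $\widetilde\ell(v)$, and lift. The monotonicity of $\cal I_{\C^2}$ in the smooth case does indeed follow from the two formulas of Lemma~\ref{lem:inner rate } by induction on the length of the path of blowups, and your explicit treatment of that point is clean (the paper simply asserts it).

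The one genuine gap is precisely the step you flag as the main obstacle: the existence of the lift. You invoke the fact that $\widetilde\ell|_{\Gamma_\pi}$ is ``a covering of graphs onto its image,'' but a ramified covering of metric graphs does not automatically enjoy a path-lifting property --- the preimage of an edge could in principle be a more complicated subgraph rather than a disjoint union of segments, in which case the lift you need (an injective path terminating at $v$ and starting at a preimage of $\ord_0$) might simply not exist. The paper closes this gap by passing to the Hirzebruch--Jung resolution adapted to $\pi$ and $\ell$: one pulls back a resolution $\sigma$ of the discriminant through $\ell$, normalizes, and resolves; because the branching locus of the normalized pullback is contained in the normal crossing divisor $\sigma^{-1}(\Delta)$, the remaining singularities are \emph{quasi-ordinary}, and from this one deduces that the $\widetilde\ell$-preimage of each edge of $\Gamma_\sigma$ is a disjoint union of segments joining preimages of the two endpoints. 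That is the structural fact that makes the lift exist and be injective. Your appeal to ``$\pi$ factors through the blowup of the maximal ideal'' only ensures that the $\cal L$-nodes are vertices of $\Gamma_\pi$ and that $\ord_0\in\widetilde\ell(\Gamma_\pi)$; it does not, by itself, give path-lifting.

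One smaller remark on your final step: the correct justification that the lift lies in $\Gamma_\pi$ (so that $\cal I_X = \cal I_{\C^2}\circ\widetilde\ell$ along it) is not that the lift is \emph{constructed} inside $\Gamma_\pi$ --- in the paper it is constructed inside the larger graph $\Gamma_{\pi'}$ of the adapted resolution --- but rather that an injective path in $\NL(X,0)$ whose two endpoints $w,v$ both lie in $\Gamma_\pi$ must be entirely contained in $\Gamma_\pi$, since each connected component of $\NL(X,0)\setminus\Gamma_\pi$ is attached to $\Gamma_\pi$ at a single point. Combined with the fact that $\ell$ is generic with respect to $\pi$ (so that the polar strict transform only meets $\cal P$-nodes, hence the bad components of $\NL(X,0)\setminus\Gamma_\pi$ where $\cal I_X < \cal I_{\C^2}\circ\widetilde\ell$ are avoided), the equality along the path follows.
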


\begin{proof} It suffices to prove this when $v$ is a divisorial point of $\NL(X,0)$. 
Let $\pi \colon X_{\pi} \to X$ be a good resolution of $(X,0)$ that factors through the blowup of the maximal ideal and through the Nash transform of $(X,0)$ and such that $v$ is the divisorial point associated with an irreducible component $E_v$ of $\pi^{-1}(0)$.  
Consider a generic projection $\ell \colon (X,0) \to (\C^2)$ with respect to $\pi$.
Let $\sigma \colon Y \to \C^2$ be a sequence of blowups of points which resolves the discriminant curve $\Delta$ of $\ell$ and such that $\tilde{\ell}(w)$ corresponds to an exceptional curve of $\sigma^{-1}(0)$ for each vertex $w$ of $\Gamma_{\pi}$.
Let us perform the usual Hirzebruch--Jung construction for $\sigma$: taking the pull-back of $\sigma$ by $\ell$ and its normalization, we obtain morphisms $\hat{\ell} \colon Z \to Y$  and $\rho \colon Z \to X$ such that $\ell\circ\rho=\sigma\circ\hat{\ell}$.
Let $\pi' \colon X_{\pi'} \to X$ be the good resolution of $(X,0)$ obtained by composing $\rho$ with the minimal good resolution of $Z$. 
Since the singularities of $Z$ are quasi-ordinary singularities and the branching locus of the projection $\hat{\ell}$  is contained in the normal crossing divisor $\sigma^{-1}(\Delta)$, it follows that the inverse image by $\tilde{\ell}$ of an edge of $\Gamma_{\sigma}$ joining two vertices $\nu$ and $\nu'$ is a union of segments in  $\NL(X,0)$ each joining a vertex of $\tilde{\ell}^{-1}(\nu)$ to a vertex of $\tilde{\ell}^{-1}(\nu')$. 
This implies that the unique injective path $\widetilde{\gamma}$ from ${\ord_0}$ to $\tilde\ell({v})$ in $NL(\C^2,0)$ lifts via the ramified cover $\widetilde\ell\colon \NL(X,0) \to \NL(\C^2,0)$ to a union of injective paths joining the vertices of $\tilde\ell^{-1}\big(\tilde\ell({v})\big)$ to $\cal L$-nodes. 
We can choose one such injective lifting $\gamma$ of $\tilde{\gamma}$ from a vertex $w$ to $v$, and as $\widetilde{\ell}(w)=\ord_0$ it follows that $w$ is a $\cal L$-node of $\NL(X,0)$.
Since $\ell$ is generic with respect to $\pi$, we deduce that $\cal I_X$ and $\cal I_{\C^2} \circ \tilde{\ell}$ coincide on $\gamma$, and since $\cal I_{\C^2}$ is strictly increasing along $\tilde{\gamma}$ from ${\ord_0}$ to $\tilde{\ell}(v)$, it follows that $\cal I_X$ is strictly increasing along $\gamma$ from $w$ to $v$.
\end{proof}

\begin{remark}\label{remark:ultrametric}
With any continuous map $F\colon\NL(X,0)\to\R_+\cup\{+\infty\}$ one can naturally associate an ultrametric (that is, non-archimedean) distance on $\NL(X,0)$ via a standard minimax procedure: the distance between two points $v$ and $v'$ is set to be $e^{-F_{v,v'}}$, where $F_{v,v'}$ is the maximum, taken over all injective paths $\gamma$ in $\NL(X,0)$ between $v$ and $v'$, of the minimum of $F$ on $\gamma$.
The observation contained in Remark~\ref{remark:maxmin} allows us to give a natural geometric interpretation to the ultrametric distance associated with the inner rate function on $\NL(X,0)$.
\end{remark}

\begin{remark}\label{remark:discrepancies}
In the smooth case, by computing it thanks to Lemma~\ref{lem:inner rate }, one can show that the inner rate function $\calI_{\C^2}$ equals the normalized log discrepancy function on $\NL(\C^2,0)$, as studied for example in \cite{FavreJonsson2004} and in \cite{BarrosoPerezPopescu-Pampu2018}, minus one.
In the singular case, we expect that the inner rate of a divisorial valuation $v$ of $(X,0)$ can be approached using a tool commonly used in birational geometry, namely logarithmic Fitting ideals, and that it could thus be related to the Mather log discrepancy (or to the Jacobian discrepancy) of $v$.
We think that this birational point of view to the study of inner rates might lead to a deeper understanding of metric germs, including the case of dimension three and higher.
\end{remark}


\section{The Laplacian of the inner rate}
\label{section_main_theorem}

In this section we prove our main result, Theorem~\ref{theorem_main}, which computes the Laplacian of the restriction of the inner rate function to the dual graph of any good resolution of $(X,0)$ that factors through the blowup of the maximal ideal.  In order to state it precisely we need to collect a few more definitions.

Let $\pi\colon X_\pi\to X$ be a good resolution and consider the map $g\colon \Gamma_\pi \to\Z_{\geq 0}$ sending a vertex of $\Gamma_\pi$ to the  genus of the associated irreducible component of the exceptional divisor and everything else to zero.
We then define the \emph{canonical divisor} $K_{\Gamma_\pi}$ of the graph $\Gamma_\pi$ as the divisor $K_{\Gamma_\pi} = \sum_{v\in\Gamma_\pi}m_v\big(\val_{\Gamma_\pi}(v)+2g(v)-2\big)[v]$ of $\Gamma_\pi$, where $\val_{\Gamma_\pi}(v)$ denotes the valency of $v$ in $\Gamma_\pi$ (that is the number of edges of $\Gamma_\pi$ adjacent to $e$). 
Observe that this is indeed an element of $\Div(\Gamma_\pi)$ because for every point $v$ of the metric graph $\Gamma_\pi$ that is not a vertex we have $\val_{\Gamma_\pi}(v)=2$ and $g(v)=0$.

In particular, the canonical divisor $K_{\Gamma_\pi}$ will account for the fact that, since the inner rate grows linearly after blowing up a smooth point on an exceptional component of a good resolution, the Laplacian of $\cal I_X |_{\Gamma_\pi}$ at a point $v$ does depend on the choice of a resolution $\pi$ such that $v\in\Gamma_\pi$, and more precisely on the valency $\val_{\Gamma_\pi}(v)$.
For this reason, the Laplacians $\Delta_{\Gamma_\pi}(\cal I_X |_{\Gamma_\pi})$ will not define a divisor on $\NL(X,0)$ by a limit procedure.

\begin{remark}
	Thanks to the adjunction formula, the divisor $K_{\Gamma_\pi}$ is more closely related to the log-canonical divisor on $X_\pi$ than to the canonical divisor, since intersecting an exceptional component $E_v$ with the former yields $\val_{\Gamma_\pi}(v) + 2g(v)-2$, while intersecting it with the latter yields $E_v^2 + 2g(v)-2$.
	However, the terminology canonical divisor for $K_{\Gamma_\pi}$ seems to be quite ubiquitous in the literature (see for example the already cited \cite{BakerNicaise2016, corry2018divisors}), in part due to the fact that the graphs considered there are not always weighted by the self-intersections, and thus we decided to maintain it.
\end{remark}

Finally, we need to introduce two divisors on $\NL(X,0)$.
Define $L=\sum m_vl_v[v]$, where $v$ ranges over the set of divisorial valuations of $(X,0)$ associated with the irreducible components of the exceptional  divisor the the blowup of the maximal ideal of $(X,0)$, and $l_v$ is the number of  irreducible components of a generic hyperplane section of $(X,0)$ whose strict transforms by the blowup intersect the divisor $E_v$ associated with $v$.
Similarly, set $P=\sum m_v p_v[v]$, for $v$ ranging over the set of divisorial valuations of $(X,0)$ associated with the Nash transform of $(X,0)$, where $p_v$ is the number of components of the  polar curve of a generic projection of $(X,0)$ whose strict transforms by the Nash transform intersect $E_v$.
Observe that by definition the divisor $L$ is supported on the set of $\cal L$-nodes of $(X,0)$, while $P$ is supported on the set of its $\cal P$-nodes.

\begin{example}\label{example:E8_LPK}
Consider again the singularity $E_8$, whose resolution data has been described in Examples~\ref{example:E8_graph}, \ref{example:E8_metric_graph}, \ref{example:E8_Laplacien}, and \ref{example:E8_inner_rate}. 
Then the canonical divisor of $\Gamma_\pi$ is $K_{\Gamma_{\pi}} = -2[v_0] + 6[v_4] - 2 [v_6] - 3 [v_7]$.
Moreover, as can be seen from the discussion of Example~\ref{example:E8_metric_graph}, we have $L=2[v_0]$.
Observe that the divisor $P$ is not supported on $\Gamma_\pi$, since the resolution $\pi$, while a resolution of the polar curve of a generic projection, does not factor through the Nash transform. In fact, 
one can verify that $\pi$ factors through the Nash transform after blowing up a suitable smooth point of $E_{v_7}$ and then a suitable smooth point of the resulting exceptional divisor  (see \cite[Example 3.5]{BirbrairNeumannPichon2014} for details). 
In particular, we deduce that $(r_\pi)_*P=3[v_7]$.
\end{example}

\subsection{Statement of the main theorem}
\label{subsection_statement}
We have now collected all the ingredients needed to state our main theorem in full generality.
Since no risk of confusion will arise, we will make a small abuse of notation and simply write $\Delta_{\Gamma_\pi}\big(\calI_X\big)$ for the Laplacian $\Delta_{\Gamma_\pi}\big(\calI_X|_{\Gamma_\pi}\big)$ of the restriction $\calI_X|_{\Gamma_\pi}$ of the inner rate function on a dual graph $\Gamma_\pi$.

\begin{theorem}[Laplacian of the inner rate function]
	\label{theorem_main} 
	Let $(X,0)$ be the germ of a complex surface with an isolated singularity. 
	Let $\pi\colon X_\pi\to X$ be a good resolution that factors through the blowup of the maximal ideal of $(X,0)$. 
	Then the following equality
	\begin{equation*}\label{eq_laplacian}
	\Delta_{\Gamma_\pi}\big(\calI_X\big)=K_{\Gamma_\pi}+2L-(r_\pi)_*P
	\end{equation*}
	holds in $\Div(\Gamma_\pi)$.
	
\end{theorem}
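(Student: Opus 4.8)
The plan is to reduce the singular case to the smooth case $(\C^2,0)$ via a generic projection $\ell\colon(X,0)\to(\C^2,0)$, and to treat the smooth case directly by an explicit local computation of inner rates using Lemma~\ref{lem:inner rate }. Since by the discussion following Lemma~\ref{lemma_inner_rate_function_NL} we know that $\calI_X=\calI_{\C^2}\circ\widetilde\ell$ away from the finitely many connected components of $\NL(X,0)\setminus\Gamma_\pi$ that contain a point $v_j$ over which the strict transform of the polar curve lives, the Laplacian $\Delta_{\Gamma_\pi}(\calI_X)$ can be computed edge by edge by pulling back the slopes of $\calI_{\C^2}$ through the branched covering of graphs $\widetilde\ell\colon\Gamma_\pi\to\Gamma_\sigma$ (for $\sigma$ a suitable resolution of the discriminant curve), plus a correction supported at the $v_j$'s that will produce the term $-(r_\pi)_*P$. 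So the skeleton is: (1) prove the smooth formula $\Delta_{\Gamma_\sigma}(\calI_{\C^2})=K_{\Gamma_\sigma}+2L_{\C^2}$ (there are no $\cal P$-nodes upstairs since the polar curve of the identity is empty); (2) analyze how each of the three terms $K$, $L$, $P$ behaves under $\widetilde\ell$ by a Riemann–Hurwitz type bookkeeping on the covering of graphs, using the quasi-ordinary/Hirzebruch–Jung description of $\ell$ recalled in Section~\ref{subsec:projection}; (3) account for the ramification of $\widetilde\ell$, which is where the genus term $g(v)$ and the valency of the $\cal L$-nodes get modified and where the polar curve contributes.

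For step (1), fix the minimal embedded resolution $\sigma$ of a generic discriminant curve together with the strict transforms of the hyperplane section; by Lemma~\ref{lem:inner rate }.\ref{lem:inner rate  SMOOTH PT} blowing up a smooth point of $E_v$ raises the inner rate by $1/m_v$ and keeps $m_v$, while Lemma~\ref{lem:inner rate }.\ref{lem:inner rate  SINGULAR PT} shows the rate is the $m$-weighted average along an edge; translating this into slopes, the outgoing slope of $\calI_{\C^2}$ along an edge $e=[v,v']$ at $v$ is $m_{v'}(\text{something})$, and summing at a vertex one gets exactly $m_v(\val(v)-2)$ from the interior edges plus $2m_v l_v$ from the arrows of the hyperplane section, i.e. $K_{\Gamma_\sigma}+2L_{\C^2}$ (here $g\equiv 0$ since all exceptional curves over a smooth point are rational). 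This is essentially a one-variable Puiseux computation and I expect it to go through cleanly; it can also be seen as a degree-zero check since $\sum_v m_v l_v$ equals the multiplicity of the discriminant divided appropriately (a Lê–Greuel type identity on $\C^2$).

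For steps (2)–(3) — and this is where I expect the real work to lie — the main obstacle is controlling the ramification of the covering $\widetilde\ell\colon\NL(X,0)\to\NL(\C^2,0)$ finely enough to see all three corrections simultaneously. The cleanest route I see is the one the authors hint at: realize the pieces of $\Gamma_\pi$ as the dual graph of the minimal resolution of the normalization $Z$ of the fiber product $X\times_{\C^2}Y_\sigma$ (Hirzebruch–Jung), so that $\widetilde\ell$ on each edge and vertex is a cyclic covering of a segment, branched only over $\sigma^{-1}(\Delta)$. One then pulls back $K_{\Gamma_\sigma}$ and applies the Riemann–Hurwitz formula for the map of curves $E_v\to C_{\widetilde\ell(v)}$: the degree of $\widetilde\ell$ on $E_v$, the genus of $E_v$, and the ramification over the finitely many intersection points of $C_{\widetilde\ell(v)}$ with neighboring components and with the discriminant all enter, and the ramification over the discriminant branch is precisely measured by the polar curve passing through the blown-up point — this is what converts the $\cal P$-node contribution of the discriminant upstairs into $-(r_\pi)_*P$ on $\Gamma_\pi$. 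The hyperplane-section term $2L$ is the easiest to transport: the arrows on $\cal L$-nodes downstairs lift to arrows on $\cal L$-nodes upstairs with the right multiplicities because $\ell$ restricted to a generic hyperplane section is unramified there. I would also keep in mind the alternative the authors use elsewhere — the interpretation of edge lengths via screw numbers of Dehn twists in the Milnor fiber of a generic linear form (Remark~\ref{remark_length_dehn_screw}) — as a consistency check, since the Euler characteristic $\chi(\check E_v)$ in Theorem~\ref{thm:B} is $2-\val(v)-2g(v)$ and the formula $\Delta=K+2L-(r_\pi)_*P$ must reproduce $m_v(2l_v-p_v-\chi(\check E_v))$ after unwinding. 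A final bookkeeping step: verify that the resulting divisor has degree zero, both as a sanity check and because it yields the Lê–Greuel–Teissier formula as a corollary.
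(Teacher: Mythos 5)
Your high-level strategy coincides with the paper's: reduce to the smooth case, then lift the formula through a generic projection $\ell$ by analyzing the branched covering $\widetilde\ell\colon\NL(X,0)\to\NL(\C^2,0)$, using the Hirzebruch--Jung normalization of $X\times_{\C^2}Y_\sigma$ (the paper's ``adapted resolution'', Definition~\ref{def:adapted}) to control the cover over edges, and applying a Hurwitz computation to transport the canonical-divisor term (the paper's Proposition~\ref{proposition_lifting_K}), with the polar curve's ramification producing $-(r_\pi)_*P$. Your treatment of the smooth case (Puiseux computation, or equivalently Lemma~\ref{lem:inner rate }) is fine, and the observation that the hyperplane-section arrows lift unramified is also correct.

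However, there is a genuine gap in how you propose to ``pull back the slopes of $\calI_{\C^2}$ through the branched covering of graphs''. The slope of $\calI_X$ along an edge $e=[v,v']$ is $(q_{v'}-q_v)/\length(e)$; the numerator transports via $\widetilde\ell$ (this is Lemma~\ref{lemma_definition_inner_rate}), but the denominator does not: one needs the precise relation $\length\big(\widetilde\ell(e)\big)=\deg(e)\,\length(e)$ between the skeletal lengths upstairs and downstairs, together with the degree identity $\deg(v)=\sum_{e'\in W_{v,e}}\deg(e')$. These are the paper's Proposition~\ref{proposition:degree_formula} and Lemma~\ref{lem: degree formula 2}, and they are what make Proposition~\ref{proposition_lifting_Delta} work — the lifting of the Laplacian itself, not just of $K$. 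Your Riemann--Hurwitz bookkeeping for $E_v\to C_{\widetilde\ell(v)}$ gives $\chi$- and degree-information that feeds into the $K$-lift, but it does not obviously yield the length formula, which lives in the skeletal metric, not in the topology of the exceptional curves. The paper proves Proposition~\ref{proposition:degree_formula} via the interpretation of $\length(e)$ as minus the screw number of the monodromy of the Milnor fibration on the annular piece $F_e$ (Proposition~\ref{rk:dehn twist}, Lemmas~\ref{lem:twist1} and~\ref{lem:twist2}). You mention this screw-number picture only ``as a consistency check'', but it is in fact the essential ingredient — indeed the authors stress that this is where the three-dimensionality of the link enters and why the result is specific to surfaces. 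Without supplying this step (or an equivalent proof of the length formula), the reduction from the singular to the smooth Laplacian does not go through.

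A smaller structural difference: the paper also isolates a clean reduction lemma (Lemma~\ref{lemma_reduction_bigger_modification}) showing that the formula is stable under further blowups in both directions, which both proves the smooth case by trivial induction and lets one pass freely between $\pi$, the adapted resolution $\tilde\pi$, and resolutions factoring through the Nash transform. Your proposal implicitly relies on such stability when invoking the adapted resolution but does not state or prove it; it is worth making explicit since the target statement allows resolutions that do \emph{not} factor through the Nash transform, which is why $P$ appears pushed forward as $(r_\pi)_*P$.
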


\begin{remark}
	\label{remark:theorem_main_implies_B}
	In particular, if $\pi$ is a good resolution of $(X,0)$ which factors through the blowup of the maximal ideal of $X$ at $0$ and through the Nash transform of $(X,0)$, then we obtain the statement of Theorem~\ref{thm:B} as given in the introduction.
	Namely, the coefficient of the Laplacian of the inner rate function on the dual graph $\Gamma_\pi$ at a vertex $v$ is precisely $m_v\big(2l_v-p_v-\chi(\check E_{v})\big)$, where $l_v$ and $p_v$ are defined as above, since $K_{\Gamma_\pi}(v)=-m_v\chi(\check E_{v})$.
\end{remark}

\begin{example}\label{example:E8_Theorem}
	The formula of Theorem \ref{theorem_main} can be readily verified in the case of the singularity $E_8$ by combining Example~\ref{example:E8_Laplacien} and Example~\ref{example:E8_LPK}. For example, on the vertex $v_4$, we indeed get $\Delta_{\Gamma_\pi}(\calI_X)(v_4) = 6 = K_{\Gamma_\pi}(v_4) = K_{\Gamma_\pi}(v_4)+2L(v_4)-P(v_4)$.
\end{example}

We introduce another simple combinatorial definition.
Let $\Gamma_\pi$ be the dual graph of a good resolution of $(X,0)$ that factors through the blowup of the maximal ideal and through the Nash transform of $(X,0)$.
We call \emph{string} of $\Gamma_\pi$ a segment $\cal S$ in $\Gamma_\pi$ that starts and ends at  vertices $v$ and $v'$ of $\Gamma_\pi$ respectively and such that $\cal S\setminus\{v,v'\}$ contains no $\cal L$-nodes, no $\cal P$-nodes, and no points whose genus is strictly positive or whose valency in ${\Gamma_\pi}$ is at least $3$.
Observe that every edge of $\Gamma_\pi$ is a string.
The following result, while a straightforward consequence of Theorem~\ref{theorem_main}, is of independent interest.

\begin{corollary} \label{cor:linearity on strings_general}  Let $(X,0)$ be the germ of a complex surface with an isolated singularity. 
	Let $\pi\colon X_\pi\to X$ be a good resolution that factors through the blowup of the maximal ideal and through the Nash transform of $(X,0)$.
	Then, if $\cal S$ is a string of $\Gamma_\pi$, the inner rate function $\calI_{X}$ is linear on $\cal S$.
\end{corollary}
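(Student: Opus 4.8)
The plan is to derive Corollary~\ref{cor:linearity on strings_general} directly from the Laplacian formula of Theorem~\ref{theorem_main}. Let $\cal S$ be a string of $\Gamma_\pi$ with endpoints $v$ and $v'$, and let $w$ be any point of $\cal S\setminus\{v,v'\}$. By the definition of a string, $w$ is not an $\cal L$-node, so $w$ does not appear in the support of $L$, and it is not a $\cal P$-node, so $w$ does not appear in the support of $(r_\pi)_*P$ — here one should note that $(r_\pi)_*P$ is supported exactly on the $\cal P$-nodes, since $\pi$ factors through the Nash transform and hence the strict transform of the polar curve already meets $\pi^{-1}(0)$ only at those nodes, so no retraction moves mass onto interior points of $\cal S$. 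Finally, since $w$ has genus zero and valency $2$ in $\Gamma_\pi$, the coefficient $K_{\Gamma_\pi}(w) = m_w(\val_{\Gamma_\pi}(w) + 2g(w) - 2) = m_w(2 + 0 - 2) = 0$ vanishes.

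Putting these three observations together, Theorem~\ref{theorem_main} gives $\Delta_{\Gamma_\pi}(\calI_X)(w) = K_{\Gamma_\pi}(w) + 2L(w) - (r_\pi)_*P(w) = 0$ for every $w$ in the relative interior of $\cal S$. I would then invoke Lemma~\ref{lemma_inner_rate_function_NL}: the restriction $\calI_X|_{\Gamma_\pi}$ is piecewise linear on $\Gamma_\pi$, hence piecewise linear on $\cal S\cong[0,\ell]$ where $\ell = \length(\cal S)$. A continuous piecewise linear function on an interval whose Laplacian (the sum of outgoing slopes) vanishes at every interior point has, at each interior breakpoint, equal incoming and outgoing slope, hence is in fact affine on the whole interval. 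Since $\cal S$ has no interior vertices of valency $\ge 3$, there is no branching to worry about, and the path is genuinely a segment, so this elementary one-dimensional fact applies verbatim and yields that $\calI_X$ is linear on $\cal S$.

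There is essentially no obstacle here: the only point requiring a moment of care is making sure that the pushed-forward polar divisor $(r_\pi)_*P$ genuinely has empty support in the interior of a string. This is immediate from the hypothesis that $\pi$ factors through the Nash transform, which forces the strict transform of a generic polar curve to meet $\pi^{-1}(0)$ only at the $\cal P$-nodes; hence $P$ is already supported on vertices of $\Gamma_\pi$ and $(r_\pi)_*P = P|_{\Gamma_\pi}$ is supported on the $\cal P$-nodes, none of which lie in $\cal S\setminus\{v,v'\}$ by definition of a string. Everything else is bookkeeping with the definitions of $L$, $P$, and $K_{\Gamma_\pi}$ recalled just above, combined with the piecewise linearity already established in Lemma~\ref{lemma_inner_rate_function_NL}.
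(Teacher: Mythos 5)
Your proof is correct and follows exactly the route the paper has in mind: the paper itself only states that the corollary is ``a straightforward consequence of Theorem~\ref{theorem_main},'' and your argument — checking that $K_{\Gamma_\pi}$, $L$, and $(r_\pi)_*P$ all vanish at interior points of a string, hence the Laplacian does too, and combining this with the piecewise linearity from Lemma~\ref{lemma_inner_rate_function_NL} — is the intended one. Your careful remark that $P$ is already a divisor on $\Gamma_\pi$ (so $(r_\pi)_*$ does not move mass into string interiors) is precisely the point that makes the Nash-transform hypothesis necessary, and it is handled correctly.
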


The remaining part of the section is devoted to the proof of Theorem~\ref{theorem_main}.
An outline of the proof's method can be found in the introduction.


\subsection{Reduction to dominant resolution and the smooth case}
\label{subsection_laplacian_C2}

We start by proving the formula of our main theorem in the case of a smooth surface germ.
This is the content of Proposition~\ref{proposition_laplacian_smooth}.

In order to do so, we will first establish a simple combinatorial lemma that will be used several times throughout the proof of Theorem~\ref{theorem_main}.

\begin{lemma}\label{lemma_reduction_bigger_modification}
	Let $\pi\colon X_\pi\to X$ and $\pi'\colon X_{\pi'}\to X$ be two good resolutions of $(X,0)$ and assume that $\pi'$ factors through $\pi$ which in turn factors through the blowup of the maximal ideal of $(X,0)$.
	If
	\begin{equation*}
	\Delta_{\Gamma_{\pi'}}(\calI_X)=K_{\Gamma_{\pi'}}+2L-(r_{\pi'})_*P
	\end{equation*}
	holds, then
	\begin{equation*}
	\Delta_{\Gamma_\pi}(\calI_X)=K_{\Gamma_\pi}+2L-(r_{\pi})_*P
	\end{equation*}
	holds as well.
	Moreover, the converse implication is also true if $\pi$ factors through the the Nash transform of $(X,0)$.
\end{lemma}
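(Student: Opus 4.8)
The plan is to reduce to the case of a single point blowup and then verify that the two sides of the formula transform in the same way. Since $\pi'$ factors through $\pi$ and both factor through the blowup of the maximal ideal, the induced morphism $X_{\pi'}\to X_\pi$ is a composition of blowups of closed points lying over $\pi^{-1}(0)$; arguing by induction on the number of these blowups, we may assume $\pi'=\pi\circ e_p$ is the blowup of $X_\pi$ at a single closed point $p\in\pi^{-1}(0)$. Let $w$ be the vertex of $\Gamma_{\pi'}$ corresponding to the new exceptional component $E_w$; note that $v_{E_w}\notin V(\Gamma_\pi)$, since $E_w$ is a prime divisor on $X_{\pi'}$ distinct from the strict transforms of the components of $\pi^{-1}(0)$. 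If $p$ lies on the intersection of two components $E_v$ and $E_{v'}$ of $\pi^{-1}(0)$, then $w$ has valency $2$ and genus $0$ and $m_w=m_v+m_{v'}$; in this case $\Gamma_{\pi'}$ and $\Gamma_\pi$ coincide as metric graphs (the blowup only inserts a redundant divalent vertex along the edge $[v,v']$, with no effect on the lengths by the computation preceding the definition of the skeletal metric, and the valencies and genera of all other vertices are unchanged), one has $r_\pi=r_{\pi'}$, and $w$ contributes $m_w\big(2+2g(w)-2\big)=0$ to the canonical divisor, so $K_{\Gamma_{\pi'}}=K_{\Gamma_\pi}$; thus the two stated equalities are literally the same equation and there is nothing to prove. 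We may therefore assume that $p$ is a smooth point of $\pi^{-1}(0)$ lying on a single component $E_v$: then $\Gamma_{\pi'}$ is obtained from $\Gamma_\pi$ by attaching at $v$ a new leaf $w$ joined to $v$ by an edge of length $1/(m_vm_w)=1/m_v^2$, with $m_w=m_v$ and $g(w)=0$ by Lemma~\ref{lem:inner rate }, and with $\val_{\Gamma_{\pi'}}(v)=\val_{\Gamma_\pi}(v)+1$. Let $\rho\colon\Gamma_{\pi'}\to\Gamma_\pi$ be the retraction collapsing $[v,w]$ onto $v$, which satisfies $r_\pi=\rho\circ r_{\pi'}$.

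For the forward implication we apply $\rho_*$ to the equality on $\Gamma_{\pi'}$. First, $\rho_*\Delta_{\Gamma_{\pi'}}\big(\calI_X\big)=\Delta_{\Gamma_\pi}\big(\calI_X\big)$: since $\rho^{-1}(v)$ is the collapsed edge $[v,w]$, the mass of $\Delta_{\Gamma_{\pi'}}(\calI_X)$ sent to $v$ is the contribution of the edges of $\Gamma_\pi$ at $v$ plus the contribution coming from all the corners and endpoints of $\calI_X$ along $[v,w]$, and the latter telescopes to $0$ (the outgoing slope at $v$ along $[v,w]$ cancels against the sum of the internal slope increments and the outgoing slope at $w$); away from $v$ the two Laplacians agree. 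Second, $\rho_*K_{\Gamma_{\pi'}}=K_{\Gamma_\pi}$: the vertex $w$ contributes $m_w\big(1+2g(w)-2\big)=-m_v$, which cancels against the extra $+m_v$ produced at $v$ by the increase of the valency of $v$ by one. Third, $\rho_*L=L$, because the set of $\cal L$-nodes of $(X,0)$ is contained in $V(\Gamma_\pi)$ (as $\pi$ factors through the blowup of the maximal ideal), so $L$ carries no mass at $w$; and $\rho_*\big((r_{\pi'})_*P\big)=(r_\pi)_*P$ by functoriality of pushforward of divisors together with $r_\pi=\rho\circ r_{\pi'}$. Combining these, the pushforward of the equality for $\pi'$ is precisely the equality for $\pi$.

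For the converse, assume in addition that $\pi$, and hence also $\pi'$, factors through the Nash transform of $(X,0)$; then both the set of $\cal L$-nodes and the set of $\cal P$-nodes of $(X,0)$ are contained in $V(\Gamma_\pi)\subset V(\Gamma_{\pi'})$, so in particular $L(w)=0$, $P(w)=0$, and $(r_\pi)_*P=P=(r_{\pi'})_*P$. We check that $\Delta_{\Gamma_{\pi'}}(\calI_X)=K_{\Gamma_{\pi'}}+2L-P$ holds coefficientwise. At a vertex of $\Gamma_{\pi'}$ different from $v$ and $w$ nothing changes between $\pi$ and $\pi'$, so the equality there follows from the equality for $\pi$. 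At $v$, the outgoing slope of $\calI_X$ along the new edge $[v,w]$ equals $\big(\calI_X(w)-\calI_X(v)\big)m_vm_w=\big(q_w-q_v\big)m_v^2=m_v$ by Lemma~\ref{lem:inner rate }.\ref{lem:inner rate  SMOOTH PT}, which is exactly the increment of $K_{\Gamma_{\pi'}}(v)$ over $K_{\Gamma_\pi}(v)$ caused by the rise in valency, so the equality at $v$ again reduces to the equality for $\pi$. Finally, at $w$, Lemma~\ref{lem:inner rate } shows that $\calI_X$ is linear along $[v,w]$ with slope $m_v$, so $\Delta_{\Gamma_{\pi'}}(\calI_X)(w)=-m_v$, while $K_{\Gamma_{\pi'}}(w)=m_w\big(1+2g(w)-2\big)=-m_v$ and $2L(w)-P(w)=0$; hence the equality holds at $w$ as well, and the formula for $\pi'$ follows.

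The only genuine subtlety, and the reason the converse implication requires $\pi$ to factor through the Nash transform, lies in the treatment of the divisor $P$: in the forward direction $(r_\pi)_*P$ is obtained from $(r_{\pi'})_*P$ by the tautological functoriality of pushforward, with no extra hypothesis, whereas in the converse one needs to know that the new vertex $w$ carries no mass of $P$, i.e.\ that $w$ is not a $\cal P$-node — which is guaranteed precisely by the fact that $\pi$ dominating the Nash transform forces all the $\cal P$-nodes to already appear among the vertices of $\Gamma_\pi$. All the other ingredients (the behaviour of the canonical divisor under a point blowup and the linearity of $\calI_X$ on the newly created edge) are immediate consequences of Lemma~\ref{lem:inner rate } and of the combinatorics of the valencies.
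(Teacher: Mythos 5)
Your proof is correct and follows essentially the same approach as the paper's: reduce by induction to a single point blowup, dismiss the case of a double point (where the metric graph and all the divisors involved are literally unchanged), then check how each of the four terms transforms under the retraction $\rho$ collapsing the new edge $[v,w]$ — the Laplacian and the canonical divisor are compatible with $\rho_*$, $L$ is $\rho_*$-stable since $\pi$ dominates the blowup of the maximal ideal, and $(r_{\pi'})_*P$ pushes forward to $(r_\pi)_*P$ by functoriality. For the converse, both you and the paper use that $L$ and $P$ are already supported on $\Gamma_\pi$ and that the slope on the new edge is $m_v$ (by Lemma~\ref{lem:inner rate }.\ref{lem:inner rate  SMOOTH PT} together with the linearity of $\calI_X$ on edges of $\Gamma_{\pi'}$), so the balance $\Delta$ gains $m_v$ at $v$ just as $K$ does, and the equation at the new leaf $w$ reads $-m_v=-m_v$. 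Your write-up is slightly more explicit than the paper's in two places (the telescoping computation for $\rho_*\Delta$, and the coefficientwise check at $w$ in the converse), and slightly less explicit in attributing the linearity of $\calI_X$ on $[v,w]$ — that step really rests on Lemma~\ref{lemma_inner_rate_function_NL} (which in turn uses Lemma~\ref{lem:inner rate }.\ref{lem:inner rate  SINGULAR PT}) applied to $\pi'$, rather than directly on Lemma~\ref{lem:inner rate } as cited — but the overall argument is the same.
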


\begin{proof}
	To unburden the notation, we will write $\Gamma$ and $\Gamma'$ for $\Gamma_\pi$ and $\Gamma_{\pi'}$ respectively.
	Since $\pi'$ dominates $\pi$, the metric graph $\Gamma'$ contains $\Gamma$.
	Denote by $\tilde r \colon \Gamma'\to\Gamma$ the restriction of $r_{\pi}$ to $\Gamma'$, so that $r_{\pi}=\tilde r\circ r_{\pi'}$.
	By applying $\tilde r_*$ to the first equation of the statement we obtain
	\[
	\tilde r_*\Delta_{\Gamma'}(\calI_X) = 
	\tilde r_*K_{\Gamma'} + 
	\tilde r_*2L-\tilde r_*\circ(r_{\pi'})_*P.
	\]
	We will first prove that the left-hand side is equal to $\Delta_{\Gamma}(\calI_X)$ and that $r_*(K_{\Gamma'})=K_{\Gamma}$.	
	As $\pi'$ is obtained by composing the resolution $\pi$ with a finite sequence of point blowups, by induction we can assume without loss of generality that $\pi'$ is obtained by blowing up a single point $q$ of $\pi^{-1}(0)$.
	If $q$ lies on the intersection of two components of $\pi^{-1}(0)$ then topologically $\Gamma'=\Gamma$, the map $\tilde r$ is the identity, and $K_{\pi'}=K_{\pi}$, so in this case there is nothing to prove.
	We can therefore assume that $\Gamma'$ is obtained from $\Gamma$ by adding a single vertex $v'$ (corresponding to the exceptional component of the blowup of $q$) and precisely one edge $e$ connecting $v'$ to a vertex $v$ of $\Gamma$.
	Then we have 
	\[
	\Delta_{\Gamma'}(\calI_X)(v) = \Delta_{\Gamma}(\calI_X)(v) + s,
	\]
	where $s$ is the slope of $\calI_X$ on the edge $e$ going from $v$ to $v'$.  
	Since the Laplacian of the restriction of $\calI_X$ to the edge $e = \tilde r^{-1}(v)$ has degree $0$, we have 
	\[
	s=-\sum_{v''\in \tilde r^{-1}(v)\setminus\{v\}}\Delta_{\Gamma'}(\calI_X)(v'').
	\]
	This implies that 
	\[
	\Delta_{\Gamma}(\calI_X)(v) = \Delta_{\Gamma'}(\calI_X)(v) +  \Delta_{\Gamma'}(\calI_X)(v')
	= \sum_{v''\in \tilde r^{-1}(v)}\Delta_{\Gamma'}(\calI_X)(v''),
	\]
	which is what we wanted to prove.
	Moreover, observe that $\tilde r_*K_{\Gamma'}(v) = K_{\Gamma'}(v) + K_{\Gamma'}(v') = K_{\Gamma'}(v) - m_{v'} = K_{\Gamma'}(v) - m_{v}$, which is precisely $K_{\Gamma}(v)$ since the valency of $\Gamma$ in $v$ is equal to the valency of $v$ in $\Gamma'$ minus one.
	This shows that $\tilde r_*$ respects the canonical divisors.
	Since $\tilde r_*\circ(r_\pi)_* = (\tilde r \circ r_{\pi'})_* = (r_{\pi})_*$, this proves the first part of the lemma.
	To establish the second part of the lemma, observe that, under the hypothesis that $\pi$ factors through the blowup of the maximal ideal and through the Nash transform of $(X,0)$, both $L$ and $P$ are divisors on $\Gamma_\pi$ and therefore they are stable under $(r_{\pi})_*$ and $(r_{\pi'})_*$, and moreover the slope $s$ on the edge $[v,v']$ is equal to $m_v$ (and therefore to $m_{v'}$) thanks to Lemma~\ref{lem:inner rate }.\ref{lem:inner rate  SMOOTH PT}.
	This means that
	\begin{align*}
	& \Delta_{\Gamma'}(\calI_X)(v) - \Delta_{\Gamma}(\calI_X)(v) = m_v = K_{\pi'}(v)-K_\pi(v) \\ 
	= \,& \big(K_{\pi'} + 2L-(r_{\pi'})_*P\big)(v) - \big(K_\pi + 2L-(r_{\pi})_*P\big)(v),
	\end{align*}
	which concludes the proof.
\end{proof}

In the smooth case the non-archimedean link $\NL(\C^2,0)$ is a tree, it has divisors $L=[\ord_0]$, the only $\cal L$-node being the divisorial valuation $\ord_0$ associated with the blowup of $\C^2$ at $0$, and $P=0$, as a generic projection to $\C^2$ is unramified and thus there are no  $\cal P$-nodes.
The Laplacian of the inner rate function in this very special case is simple to compute thanks to the previous lemma.
 
\begin{proposition}\label{proposition_laplacian_smooth}
	Let $\pi\colon X_\pi\to \C^2$ be a sequence of point blowups of $(\mathbb{C}^2,0)$ that is not the identity.
	Then the Laplacian of the inner rate function $\calI_{\C^2}$ on the dual graph $\Gamma_\pi$ associated with $\pi$ is
	\[
	\Delta_{\Gamma_\pi}(\calI_{\C^2})=K_{\Gamma_\pi}+2[\ord_o].
	\]
\end{proposition}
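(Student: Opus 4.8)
The plan is to reduce to the simplest possible resolution, compute the Laplacian there by hand, and then bootstrap to the general case using Lemma~\ref{lemma_reduction_bigger_modification}. First I would observe that the statement only involves a sequence of point blowups of $(\C^2,0)$, all of which automatically factor through the blowup of the maximal ideal (the first blowup must be the blowup of the origin, or can be taken to be so after dominating), and through the ``Nash transform'' of $(\C^2,0)$, which is trivial since $(\C^2,0)$ is smooth; in particular $P=0$ and $L=[\ord_o]$. Hence Lemma~\ref{lemma_reduction_bigger_modification} applies in both directions: it suffices to verify the formula $\Delta_{\Gamma_\pi}(\calI_{\C^2})=K_{\Gamma_\pi}+2[\ord_o]$ for \emph{one} resolution $\pi$, and then it holds for all of them. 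The natural choice is $\pi_0\colon X_{\pi_0}\to\C^2$ the single blowup of the origin, whose dual graph $\Gamma_{\pi_0}$ is a single vertex $\ord_o$ with no edges.

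For this minimal resolution the computation is immediate from Lemma~\ref{lem:inner rate }. The vertex $\ord_o$ has valency $0$ and genus $0$, and multiplicity $m_{\ord_o}=1$, so $K_{\Gamma_{\pi_0}}=1\cdot(0+0-2)[\ord_o]=-2[\ord_o]$, giving $K_{\Gamma_{\pi_0}}+2[\ord_o]=0$. On the other hand, $\Gamma_{\pi_0}$ has no edges, so the Laplacian $\Delta_{\Gamma_{\pi_0}}(\calI_{\C^2})$, being a sum of outgoing slopes over edges emanating from $\ord_o$, is the zero divisor as well. Thus the formula holds trivially for $\pi_0$.

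To conclude I would apply Lemma~\ref{lemma_reduction_bigger_modification}: any sequence of point blowups $\pi$ of $(\C^2,0)$ that is not the identity dominates (after possibly further blowups, and using the converse direction of the lemma which is available here because the Nash transform is trivial) the blowup $\pi_0$ of the origin, hence factors through it; more carefully, given an arbitrary non-trivial $\pi$, the resolution $\pi'$ obtained by composing $\pi$ with enough blowups to dominate $\pi_0$ satisfies the formula by repeated application of the lemma starting from $\pi_0$, and then the forward direction of the lemma pushes the formula down from $\pi'$ to $\pi$. Since $\pi$ automatically factors through the (trivial) Nash transform, there is no obstruction to running the lemma in either direction.

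The one point requiring a little care — and the main potential obstacle — is the bookkeeping around the hypothesis ``$\pi$ factors through the blowup of the maximal ideal''. If $\pi$ is a sequence of point blowups not starting with the blowup of the origin, one must first argue that it can nonetheless be dominated by a resolution factoring through $\pi_0$ (which is clear: blow up the origin first, or equivalently pass to a common refinement), so that the chain of reductions in Lemma~\ref{lemma_reduction_bigger_modification} connects $\pi$ to $\pi_0$ through resolutions all satisfying the factorization hypothesis. Once this is set up, the proof is just the two-line computation above together with two invocations of the lemma.
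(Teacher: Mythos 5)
Your proof is correct and matches the paper's argument: both verify the formula at the single blowup $\pi_0$ of the origin, where $\Gamma_{\pi_0}$ is a single vertex so that $\Delta_{\Gamma_{\pi_0}}(\calI_{\C^2})=0$ and $K_{\Gamma_{\pi_0}}+2[\ord_0]=0$, and then bootstrap via Lemma~\ref{lemma_reduction_bigger_modification}. The only inessential difference is that the paper phrases the remaining step as a direct induction on the number of blowups in $\pi$, invoking only the converse direction of the lemma (which is available because the Nash transform of $(\C^2,0)$ is trivial), whereas your final paragraph's detour through a refinement $\pi'$ and a subsequent push-down is unnecessary: any non-identity sequence of point blowups above $0\in\C^2$ must begin with the blowup of the origin, so $\pi$ already factors through $\pi_0$ and the up-then-down maneuver collapses to the paper's induction.
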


\begin{proof}
We will argue by induction on the number of blowups in $\pi$.
If $\pi$ is obtained by blowing up the origin of $\C^2$ once, then the associated dual graph consists of the divisorial valuation $\mathrm{ord}_0$ and the formula is immediate, as $\Delta_{\{\mathrm{ord}_0\}}(\cal I_{\C^2})=0$ and $K_{\{\mathrm{ord}_0\}}(\mathrm{ord}_0)=-2$.
The inductive step is a direct consequence of the second part of Lemma~\ref{lemma_reduction_bigger_modification}.
\end{proof}

We state as a separate result the smooth case of Corollary~\ref{cor:linearity on strings_general}, which we obtain as an immediate consequence of Proposition~\ref{proposition_laplacian_smooth}, since we will need to refer to it in the course of our proof of Theorem~\ref{theorem_main}. 

\begin{corollary} \label{cor:linearity on strings}
Let $\pi\colon X_\pi\to\C^2$ be a sequence of point blowups starting with the blowup of the origin of  $\C^2$. 
Then, if $\cal S$ is a string of $\Gamma_\pi$, the inner rate function $\calI_{\C^2}$ is linear on $\cal S$.
\end{corollary}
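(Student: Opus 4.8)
The plan is to read the statement off directly from the Laplacian formula of Proposition~\ref{proposition_laplacian_smooth}, so the argument will be essentially bookkeeping. First I would record that $\calI_{\C^2}$ is well enough behaved on $\Gamma_\pi$: any sequence $\pi$ of point blowups of $\C^2$ starting with the blowup of the origin is automatically a good resolution (a composition of point blowups of a smooth surface has normal crossing exceptional divisor), it factors through the blowup of the maximal ideal of $(\C^2,0)$ (its very first blowup \emph{is} that blowup), and it factors through the Nash transform of $(\C^2,0)$ (which is an isomorphism, since $\C^2$ is smooth). Hence Lemma~\ref{lemma_inner_rate_function_NL} applies and guarantees that $\calI_{\C^2}$ is linear, with integral slopes, on every edge of $\Gamma_\pi$. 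Consequently $\calI_{\C^2}$ can only fail to be linear on a string $\cal S$ at one of the interior vertices of $\cal S$, so this is the only thing left to rule out.

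Next I would examine an interior vertex $w$ of $\cal S$. Unwinding the definition of a string in the smooth setting: $w$ is not an $\cal L$-node, hence $w\neq\ord_0$ (the unique $\cal L$-node of $\NL(\C^2,0)$); one has $g(w)=0$, since every exceptional curve of a blowup of a smooth surface is rational, so the genus and $\cal P$-node conditions in the definition of a string are vacuous here; and $w$ has valency exactly $2$ in $\Gamma_\pi$. Feeding these into Proposition~\ref{proposition_laplacian_smooth}, I would compute
\[
\Delta_{\Gamma_\pi}(\calI_{\C^2})(w)=K_{\Gamma_\pi}(w)+2[\ord_0](w)=m_w\big(\val_{\Gamma_\pi}(w)+2g(w)-2\big)+0=0.
\]

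Finally I would translate this vanishing into the absence of a kink: since $w$ has exactly two incident edges, $\Delta_{\Gamma_\pi}(\calI_{\C^2})(w)$ is the sum of the two outgoing slopes of $\calI_{\C^2}$ at $w$, and its vanishing means these two slopes are opposite, i.e. $\calI_{\C^2}$ passes through $w$ without changing slope. Since this holds at every interior vertex of $\cal S$, and $\calI_{\C^2}$ is linear on each edge, it is linear on all of $\cal S$. I do not expect a genuine obstacle here; the only point deserving attention is the verification that, for $\C^2$, ``being an interior point of a string'' really does force valency $2$ and $w\neq\ord_0$ — after which everything is formal and follows at once from the Laplacian formula.
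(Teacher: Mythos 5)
Your argument is correct and is exactly the intended one: the paper presents Corollary~\ref{cor:linearity on strings} as an immediate consequence of Proposition~\ref{proposition_laplacian_smooth}, and your derivation just spells out why the right-hand side of the Laplacian formula vanishes at any interior point of a string in the smooth case (valency $2$, genus $0$, not $\ord_0$). Nothing is missing.
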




\subsection{Dehn twists and screw numbers}
\label{subsection_monodromy_screw_numbers}

 Set $S^1=\{z \in \C \colon |z|=1\}$.  Let $f \colon A \to A$   be an orientation preserving diffeomorphism of  a disjoint union $A=\coprod_{i=1}^r A_i$ of annuli  $A_i \cong S^1\times[0,1] $ which cyclically exchanges the annuli  $A_i$ and which  is periodic on the union   $\partial A=\coprod_{i=1}^r \partial A_i$ of their boundaries. 
    Let $N$ be an integer such that $f^N$ is the identity on  $\partial A$. 
    Then, up to isotopy fixed on   $\partial A$, the map $f$ is characterized by a rational number $\tau$ defined as follows.
	Let us choose an annulus $A_{i_0}$ among the $A_i$ and fix an isomorphism $A_{i_0}\cong S^1\times[0,1]$.
	Observe that the restriction of $f^N$ to $A_{i_0}$ is, up to isotopy fixed on $\partial A_{i_0}$, a product of Dehn twists.  
	Consider the transverse oriented path $\delta = \{1\} \times [0,1]$ inside $S^1\times[0,1] $, 
	and let us orient the circle  $c = S^1 \times \{1\}$ in such a way that the intersection number $\delta.c$ of $\delta$ with $c$ on the oriented surface $S^1\times[0,1]$ is equal to $+1$. 
 
  \begin{definition} The {\it screw number} $\tau$  of $f$ is  the rational number defined by the following equality in $H_1(S^1\times[0,1], \Z)$: 
   \[
   N\tau  c  = f^N(\delta) - \delta.
   \] 
Observe that $\tau$ does not depend on the choice of the integer $N$ such that $f^N$ is the identity on $\partial A$, nor on the choice of the annulus $A_{i_0}$.
\end{definition}

In the sequel, we will use the two following simple lemmas. 
\begin{lemma}   \label{lem:twist1}  Set $A = S^1\times[0,1]$ and decompose  the annulus $A$ as the union of the two concentric annuli $A_1= S^1\times[0,\frac{1}{2}]$ and $A_2= S^1\times[\frac{1}{2}, 1]$.
Let $f \colon A \to A$ be an orientation preserving diffeomorphism such that $f(A_1) = A_1$,  $f(A_2) = A_2$, and there exists $N>0$ such that $f^N$ is the identity on $S^1 \times \{ 0,\frac12,1\}$.
Then, if $f|_{A_1}$ and $f|_{A_2}$ have screw numbers $\tau_1$ and $\tau_2$ respectively, $f$ has screw number $\tau_1+\tau_2$. 
\end{lemma}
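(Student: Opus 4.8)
\textbf{Proof plan for Lemma~\ref{lem:twist1}.}

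The plan is to compute both sides of the defining equation for the screw number directly, using the additivity of the transverse intersection picture. First I would fix an isomorphism $A \cong S^1 \times [0,1]$ compatibly with the given decomposition, so that $A_1 = S^1 \times [0,\tfrac12]$ and $A_2 = S^1 \times [\tfrac12,1]$, and I would take the transverse path $\delta = \{1\} \times [0,1]$ as in the definition, together with the core circle $c = S^1 \times \{1\}$ oriented so that $\delta \cdot c = +1$ on the oriented surface. The key observation is that $\delta$ decomposes as the concatenation $\delta = \delta_1 * \delta_2$, where $\delta_1 = \{1\} \times [0,\tfrac12]$ is a transverse path in $A_1$ and $\delta_2 = \{1\} \times [\tfrac12,1]$ is a transverse path in $A_2$, and that $f^N$ fixes the common endpoint $(1,\tfrac12)$ since by hypothesis $f^N$ is the identity on $S^1 \times \{0,\tfrac12,1\}$ for a suitable $N$ (which we may choose to work simultaneously for $f$, $f|_{A_1}$, and $f|_{A_2}$, as the screw number is independent of this choice).

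Next I would compute $f^N(\delta) - \delta$ in $H_1(A,\partial A; \Z)$, or more concretely track the homology class: since $f^N$ preserves $A_1$ and $A_2$ and fixes their boundaries pointwise, we have $f^N(\delta) = f^N(\delta_1) * f^N(\delta_2)$, hence
\[
f^N(\delta) - \delta = \big(f^N(\delta_1) - \delta_1\big) + \big(f^N(\delta_2) - \delta_2\big)
\]
as $1$-cycles in $A$ (relative to the boundary). By the definition of the screw numbers of $f|_{A_1}$ and $f|_{A_2}$, the first difference equals $N\tau_1 c_1$ and the second equals $N\tau_2 c_2$, where $c_i$ is the appropriately oriented core circle of $A_i$. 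Since $A_1$ and $A_2$ are concentric annuli inside $A$ with the orientation and the transverse path chosen compatibly, the classes $c_1$, $c_2$, and $c$ all coincide in $H_1(A;\Z) \cong \Z$; the orientation conventions match precisely because $\delta_1$, $\delta_2$, $\delta$ are all sub-arcs of $\{1\}\times[0,1]$ with the same transverse orientation, so the normalization $\delta_i \cdot c_i = +1$ is consistent with $\delta \cdot c = +1$. Therefore $f^N(\delta) - \delta = N(\tau_1 + \tau_2) c$, which by definition means the screw number of $f$ is $\tau_1 + \tau_2$.

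I expect the only genuinely delicate point to be the bookkeeping of orientations: one must check that the identification $H_1(A_1;\Z) \cong H_1(A;\Z) \cong H_1(A_2;\Z)$ sends $c_1$ and $c_2$ to $+c$ and not $-c$, and that the common value of $N$ can indeed be chosen. The first is handled by noting that the inclusions $A_i \hookrightarrow A$ are orientation-preserving homotopy equivalences and the transverse arcs are literally restrictions of a single arc $\delta$, so the sign conventions in the three copies of the definition are forced to agree. The second is immediate since if $f^{N_1}$ and $f^{N_2}$ are the identity on the relevant boundary circles, then $N = \mathrm{lcm}(N_1,N_2)$ works for all three maps at once, and the screw number is insensitive to this choice by the remark following the definition. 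Everything else is the additivity of concatenation of paths in relative homology, which is routine.
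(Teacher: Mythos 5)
Your proof is correct and follows essentially the same line as the paper: decompose $\delta$ into $\delta_1 \cup \delta_2$, apply the defining relation $f^N(\delta_i) - \delta_i = N\tau_i c_i$ on each sub-annulus, observe that the core circles $c_1$, $c_2$, $c$ are all homologous in $A$ under the matching orientation conventions, and add. The extra remarks you include on why the orientations of $c_1$, $c_2$, $c$ agree and on the common choice of $N$ (already supplied by the hypothesis) are sound but not essential to the argument as the paper presents it.
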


\begin{proof} 
Set $\delta_1 = \{1\} \times  [0,\frac{1}{2}]$, $\delta_2 = \{1\} \times  [\frac{1}{2},1]$ and $\delta = \delta_1 \cup \delta_2 = \{1\} \times  [0,1] $. 
Set also $c_1 = S^1 \times \{\frac{1}{2}\}$ and  $c_2 = S^1 \times \{1\}$, oriented so that $\delta.c_i  = +1$ in $A$, so $c_1$ and $c_2$ have the same homology class on $A$. 
Then $ N \tau_1  c_1  = f^N(\delta_1) - \delta_1$ and $N  \tau_2  c_2  = f^N(\delta_2) - \delta_2$ hold as equalities of cycles in $A_1$ and $A_2$ respectively. 
Since $c_1 $ is homologous to $ c_2$ in $A$ and $\delta = \delta_1 \cup \delta_2$, we deduce that $ N(\tau_1 + \tau_2)  c_1 = f^N(\delta) - \delta$ in $H_1(A , \Z)$.
This proves that the screw number of $f$ equals $\tau_1+\tau_2$. 
\end{proof}

\begin{lemma}  \label{lem:twist2} Let $A =  S^1  \times  [0,1]$ be an annulus and let $\ell \colon A \to A$ be a cyclic cover of degree $\deg(\ell)$. 
Let $\phi \colon A \to A$  and $\phi' \colon A \to A$ be two orientation preserving diffeomorphisms such that $\ell \circ \phi = \phi' \circ \ell$ and such that both $\phi$ and $\phi'$ have a power which is the identity on the boundary of $A$, and let $\tau$ and $\tau'$ denote the screw numbers of $\phi$ and $\phi'$ respectively.
Then $\tau' = \deg(\ell) \tau$.
\end{lemma}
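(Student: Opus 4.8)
The statement compares the screw number $\tau$ of a diffeomorphism $\phi$ of an annulus $A = S^1 \times [0,1]$ with the screw number $\tau'$ of a diffeomorphism $\phi'$ that it covers via a cyclic cover $\ell$ of degree $d = \deg(\ell)$, the compatibility being $\ell \circ \phi = \phi' \circ \ell$. The plan is to unwind the definition of the screw number on each side, pick coordinates on $A$ so that $\ell$ becomes an explicit $d$-fold covering map of annuli, and track what happens to the transverse path $\delta$ and the boundary circle $c$.

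First I would choose the identification $A = S^1 \times [0,1]$ and normalize $\ell$ so that, after an isotopy, $\ell(w, s) = (w^d, s)$ on $S^1 \times [0,1]$; this is the standard normal form for a cyclic (connected) cover of the annulus, and one may use it freely up to isotopy fixed on the boundary since the screw numbers only depend on isotopy classes. Let $N$ be an integer such that both $\phi^N$ and $(\phi')^N$ are the identity on $\partial A$ (such an $N$ exists by hypothesis; enlarging $N$ does not change either screw number). By definition $N\tau\, c = \phi^N(\delta) - \delta$ in $H_1(A, \Z)$, where $c = S^1 \times \{1\}$ suitably oriented and $\delta = \{1\} \times [0,1]$, and likewise $N\tau'\, c = (\phi')^N(\delta) - \delta$ downstairs. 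The key computation is to compare the homology classes of the transverse arcs. The path $\delta$ upstairs maps under $\ell$ to $\delta$ downstairs (since $\ell(1,s) = (1,s)$); conversely $\ell_* \colon H_1(A) \to H_1(A)$ sends the generator $[c]$ (upstairs) to $d\,[c]$ (downstairs), because the core circle wraps $d$ times.

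Now apply $\ell$ to the relation $N\tau\, c = \phi^N(\delta) - \delta$: using $\ell \circ \phi^N = (\phi')^N \circ \ell$ and $\ell(\delta) = \delta$, the right-hand side becomes $(\phi')^N(\delta) - \delta$, which equals $N\tau'\, c$ by the downstairs definition; the left-hand side becomes $N\tau\, \ell_*(c) = N\tau\, d\, c$. Comparing coefficients of the generator $[c] \in H_1(A, \Z) \cong \Z$ gives $N\tau' = N\tau\, d$, hence $\tau' = d\,\tau$, which is the claim. The one point requiring a little care — and the step I expect to be the main (mild) obstacle — is checking the compatibility of orientations: that the orientation convention $\delta . c = +1$ used upstairs is carried by the degree-$d$ covering to the same convention downstairs (so that no sign is introduced), and that $\ell$ is indeed orientation preserving so that $\ell_*[c] = +d\,[c]$ rather than $-d\,[c]$. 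This follows from the normal form $\ell(w,s) = (w^d, s)$, which is orientation preserving and sends the oriented transverse arc to the oriented transverse arc while wrapping the boundary circle positively $d$ times; once this is recorded the homological computation is immediate.
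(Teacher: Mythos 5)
Your proof is correct and takes essentially the same approach as the paper: apply $\ell$ to the homological relation $\phi^N\delta-\delta=N\tau c$, use the intertwining $\ell\circ\phi^N=(\phi')^N\circ\ell$ together with $\ell(\delta)\sim\delta$ and $\ell_*(c)=\deg(\ell)\,c$ in $H_1(A,\Z)$, and compare. The only stylistic difference is that you normalize $\ell$ to the explicit form $(w,s)\mapsto(w^d,s)$ to make these two homological facts transparent, while the paper simply asserts them; both versions are equally valid.
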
   

\begin{proof}   Let $N$ be a positive integer such that both $\phi^N$ and  $(\phi')^N$ are the identity on $\partial A$. 
Then, using the same notations as before, the screw number $\tau$ of $\phi$ is defined by $\phi^N \delta - \delta = N\tau c$. Therefore  $\ell (\phi^N \delta) - \ell(\delta) = N\tau \ell(c)$. 
Observe that we have $\ell (\delta) - \delta=0$ and $\ell(c) - \deg (\ell) c =0$
in $H_1(A,\Z)$. 
Since $ \ell \circ \phi^N = ({\phi'})^N \circ \ell$, this implies that $({\phi'})^N \delta - \delta = N\tau \deg (\ell) c$, proving that the screw number $\tau'$ of $\phi'$ is equal to $\deg(\ell)\tau$. 
 \end{proof}

\subsection{Skeletal metric and the monodromy of the Milnor fibration} \label{subsection_monodromy_lengths}

Let $(X,0)$ be the germ of a complex surface with an isolated singularity, let $\pi \colon X_{\pi} \to X$ be a good resolution of $(X,0)$ which factors through the blowup of the maximal ideal, and let $h\colon (X,0) \to (\C,0)$ be a generic linear form on $(X,0)$ (that is, $h^{-1}(0)$ is a generic hyperplane section of $(X,0)$).
In this section we describe the lengths of the edges of $\Gamma_\pi$ as screw numbers of the monodromy of some pieces of the Milnor fibration of $h$.

Choose an embedding of $(X,0)$ into a smooth germ $(\C^n,0)$ and, given $\epsilon,\eta>0$, consider balls $B_{\epsilon} =\{z \in \C^n; \|z\| \leq \epsilon\}$ in $\C^n$ and $D_{\eta} = \{z \in \C \,|\, |z| \leq \eta\}$ and $D_{\eta}^*  = D_{\eta}  \setminus \{0\}$ in $\C$.
If $\eta<<\epsilon$ are small enough, then $B_\epsilon$ is a Milnor ball for $X$, that is $X$ intersects transversally the boundary of $B_{\epsilon'}$ for all $\epsilon'<\epsilon$, and the restriction $h|_{T_{\epsilon, \eta}} \colon T_{\epsilon, \eta} \to D_{\eta}^*$ of $h$ to ${T_{\epsilon, \eta}} = B_{\epsilon}    \cap h^{-1}(D_{\eta}^*)$ is a topologically trivial fibration called the  \emph{Milnor--L\^e fibration} of $h$.
We also call ${T_{\epsilon, \eta}}$ a \emph{Milnor tube} for $L$, and given $t$ in $D_{\eta}^*$ denote by $F_t = (h |_{T_{\epsilon,\eta}} )^{-1}(t)$ a fiber of this fibration. 

The monodromy of $h{|_{T_{\epsilon,\eta}}} $ admits a quasi-periodic representative $\phi$, which means that there exist a disjoint union of annuli $A$ embedded in the fiber of $h{|_{T_{\epsilon,\eta}}}$ and an integer $N\geq 1$ such that $\phi(A) = A$ and $\Phi^N$ is the identity map outside of the interior of $A$.  
Such a representative $\Phi$ can be described using a suitable resolution of $(X,0)$ in the following way.
Let $\pi \colon X_{\pi} \to X$ be a good resolution of $(X,0)$ which factors through the blowup of the maximal ideal. 
In what follows, we identify the fibers $F_t$ with their inverse image through $\pi$ (recall that $\pi$ is a diffeomorphism outside $0$) without changing the notation.

For each component $E_v$ of $E = \pi^{-1}(0)$, let $N(E_v)$ be a tubular neighborhood of $E_v$ in $X_{\pi}$, which is the total space of a normal disc-bundle to $E_v$ in $X_\pi$.  
Observe that $\pi^{-1}(B_{\epsilon})$ can be identified with a tubular neighborhood of $E$ obtained by plumbing the disc-bundles $N(E_v)$.  
For each vertex $v$ of $\Gamma_\pi$, we set
\[
\cal N (E_v) =  \overline{N(E_v)\setminus    \bigcup_{v' \neq v} N(E_{v'}) }.
\]
For each irreducible component $\gamma$ of $h^{-1}(0)$ in $(X,0)$, denote by $\gamma^*$ the strict transform of $\gamma$ in $X_\pi$, let $A_{\gamma^*}$ be a small disc in $E_v \setminus \bigcup_{v' \neq v}  E_{v'}$ centered at $\gamma^* \cap E_v$, and let $N(\gamma^*)$ be the restriction of $N(E_v)$ over $A_{\gamma^*}$, so that $N(\gamma^*)$ is a small polydisc neighborhood of $\gamma^*$ in $X_{\pi}$.
Set 
\[
\check{\cal N}(E_v) = \overline{\cal N(E_v)\setminus     \bigcup_{\gamma \subset \{h=0\}} N(\gamma^*) } \quad \text{ and } \quad F_{v,t} = F_t \cap  \check{\cal N}(E_v).
\]
We will often simply denote $F_{v,t}$ by $F_v$; since all fibers are diffeomorphic no risk of confusion will arise.

The intersection  $\partial T_{\epsilon,\eta} \cap \pi\big(\check{\cal N}(E_v)\big)$, where $\partial T_{\epsilon,\eta}=h^{-1}(\partial D_\eta)\cap B_\epsilon$, is fibered  by the oriented circles obtained by intersecting with $\partial T_{\epsilon,\eta}$ the image via $\pi$ of the disc-fibers of $N(E_v)$.
Then the monodromy $\phi \colon F_t \to F_t$ is defined on  each $F_{v,t}$ as the diffeomorphism of first return of these circles, so it is a periodic diffeomorphism. 

Now consider an edge $e=[v,v']$ in $\Gamma_{\pi}$ and let $q \in E_v \cap E_{v'}$ be the corresponding intersection point. 
Let $\cal N(e)$ be the component of $N(E_v) \cap N(E_{v'})$ containing $q$. 
The intersection $F_{e,t} = F_t \cap \cal N(e)$ (or simply $F_e$) is the disjoint union of $\gcd(m_v,m_{v'})$ annuli which are  cyclically exchanged by $\phi$.
Its monodromy is related to the length of the edge $e$ by the following proposition.

\begin{proposition}(\cite[Theorem 7.3.(iv)]{MatsumotoMontesinos2011})  \label{rk:dehn twist}  Denote by $\phi_e$ the restriction $\phi{|}_{F_e} \colon F_{e} \to F_{e}$ of the monodromy $\phi$ to $F_e$. 
Then $\phi_e^{m_v m_{v'}} = id_{F_e}$ and $\phi_e$ has screw number $ -\frac{1}{m_vm_{v'}}$.  
In other words, this screw number is the opposite of $\mathrm{length} (e)$.
\end{proposition}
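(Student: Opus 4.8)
The plan is to reduce the statement to an explicit local computation near the intersection point $q \in E_v \cap E_{v'}$, using the standard description of the Milnor fibration in the resolution. First I would set up local coordinates $(u_1,u_2)$ on $X_\pi$ centered at $q$ in which $E_v = \{u_1 = 0\}$ and $E_{v'} = \{u_2 = 0\}$, so that the pullback of a generic linear form $h$ is $h\circ\pi = u_1^{m_v} u_2^{m_{v'}} \cdot (\text{unit})$; since $\pi$ factors through the blowup of the maximal ideal, $h$ is generic enough that this unit can be absorbed and we may take $h\circ\pi = u_1^{m_v}u_2^{m_{v'}}$ on $\mathcal N(e)$. The piece $\mathcal N(e)$ is a bidisc $\{|u_1|\le \delta,\ |u_2|\le\delta\}$, and $F_{e,t} = \{u_1^{m_v}u_2^{m_{v'}} = t\}$ inside it; this is the classical Pham–Brieskorn-type local model and is well known to be a disjoint union of $\gcd(m_v,m_{v'})$ annuli cyclically permuted by the monodromy, which is already recorded in the excerpt.

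Next I would analyze the monodromy. The geometric monodromy of $h\circ\pi$ around $t=0$ is realized on the local model $\{u_1^{m_v}u_2^{m_{v'}} = t\}$, $t = \eta e^{2\pi i\theta}$, $\theta\in[0,1]$, by the flow that rotates $u_1$ and $u_2$; concretely one can write a vector field supported near this piece whose time-$1$ flow sends $(u_1,u_2)$ along a path ending at $(e^{2\pi i a/m_v}u_1, e^{2\pi i b/m_{v'}}u_2)$ with $a m_{v'} + b m_v \equiv$ (appropriate integer mod $m_v m_{v'}$). Raising to the power $m_v m_{v'}$ kills the permutation of the $\gcd(m_v,m_{v'})$ annuli and returns to the identity on the boundary, giving $\phi_e^{m_v m_{v'}} = \mathrm{id}_{F_e}$; this is the first assertion. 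For the screw number, I would fix one annulus component $A_{i_0}\cong S^1\times[0,1]$, take the transverse path $\delta$, and track $\phi_e^{m_v m_{v'}}(\delta) - \delta$ in $H_1(S^1\times[0,1],\Z)$. The point is that the full $m_v m_{v'}$-fold monodromy on this annulus is a single Dehn twist to the power equal to the "defect," and the Hirzebruch–Jung / continued-fraction combinatorics of the pair $(m_v,m_{v'})$ makes this defect equal to $-1$ on each of the $\gcd$ annuli; dividing by $N = m_v m_{v'}$ gives screw number $-\tfrac{1}{m_v m_{v'}}$.

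Alternatively — and this is probably the cleanest route, so I would likely present it this way — I would deduce the result from the $\mathbb C^2$ case via the generic projection $\ell\colon (X,0)\to(\mathbb C^2,0)$ that is generic with respect to $\pi$. On $\mathbb C^2$, with $\sigma\colon Y\to\mathbb C^2$ the image resolution, the analogous statement for the curvette annuli is a direct continued-fraction computation (it is essentially the content of the cited \cite[Theorem 7.3.(iv)]{MatsumotoMontesinos2011}), so it can be taken as the base case. Then the restriction of $\ell$ to $F_e$ maps it to the corresponding annulus piece $F_{\bar e}$ downstairs as a cyclic cover of some degree $d$, and the monodromies are intertwined: $\ell\circ\phi_e = \phi_{\bar e}\circ\ell$. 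By Lemma~\ref{lem:twist2}, the screw number of $\phi_{\bar e}$ equals $d$ times that of $\phi_e$; combining with the explicit lengths downstairs and the relation $m_v m_{v'} = d\cdot(\text{denominator downstairs})$ coming from how multiplicities transform under the cover, one gets $\tau(\phi_e) = -\tfrac{1}{m_v m_{v'}}$. The order statement $\phi_e^{m_v m_{v'}} = \mathrm{id}$ follows because $\phi_{\bar e}$ has order equal to its denominator and $\phi_e^{m_v m_{v'}}$ projects to $\phi_{\bar e}^{(\text{that denominator})\cdot(\text{something})} = \mathrm{id}$, while $\phi_e^{m_v m_{v'}}$ is isotopic to a product of Dehn twists with screw number $m_v m_{v'}\cdot(-\tfrac{1}{m_v m_{v'}}) = -1 \in \Z$ on each annulus — wait, that would not be the identity; so I must instead argue the order directly from the local model, where $\phi_e$ is genuinely a finite-order-on-boundary quasi-periodic map of period dividing $m_v m_{v'}$ and the cyclic permutation of the $\gcd$ annuli has order $m_v m_{v'}/\gcd(m_v,m_{v'})$ while the return map on each is periodic of the complementary order. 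The main obstacle, then, is precisely this bookkeeping: pinning down the exact Dehn-twist exponent (equivalently, that the numerator of the screw number is exactly $-1$) requires either the continued-fraction identity for the Hirzebruch–Jung string between $v$ and $v'$ or a careful orientation-sensitive computation in the local model $u_1^{m_v}u_2^{m_{v'}} = t$, and getting the sign and the normalization by $N$ right is where the real care is needed; everything else is formal consequence of Lemmas~\ref{lem:twist1} and~\ref{lem:twist2} together with the structure of $\pi^{-1}(B_\epsilon)$ as a plumbing.
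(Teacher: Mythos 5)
Note first that the paper does not actually prove this proposition: it is stated with \cite[Theorem~7.3.(iv)]{MatsumotoMontesinos2011} as its justification and no proof is supplied. So there is no argument in the paper to compare yours against; you are attempting to supply something the paper deliberately treats as a black box.

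As for the attempt itself: the first route (a direct computation in the local model $u_1^{m_v}u_2^{m_{v'}}=t$) is the right one, and it is essentially the content of the cited theorem, but you do not carry it out. You assert that the Hirzebruch--Jung bookkeeping produces a defect of $-1$ and then explicitly say you are unsure of the sign and the normalization by $N$; since that bookkeeping \emph{is} the proposition, this is a framework rather than a proof. The apparent contradiction you flag at the end is real and should be resolved rather than left hanging: if $\phi_e^{m_vm_{v'}}$ were literally the identity on $F_e$, then the defining relation $N\tau c = f^N(\delta)-\delta$ would force $\tau=0$. The correct reading, dictated by the definition in Section~\ref{subsection_monodromy_screw_numbers}, is that $\phi_e^{m_vm_{v'}}$ is the identity on $\partial F_e$ (so that $N=m_vm_{v'}$ is an admissible period in the screw-number formula), while rel boundary $\phi_e^{m_vm_{v'}}$ is a single negative Dehn twist on each of the $\gcd(m_v,m_{v'})$ annuli. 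Your second route, reducing to $\C^2$ via $\widetilde{\ell}$, is both circular and out of logical order: the $\C^2$ case \emph{is} this proposition for a smooth germ and still requires the local computation (or the citation, at which point one may as well cite it directly for $(X,0)$); moreover $\widetilde{\ell}(e)$ is in general a string of edges rather than a single edge, so Lemma~\ref{lem:twist2} alone does not apply and you would also need Lemma~\ref{lem:twist1} together with the length/degree relation that the paper deduces \emph{from} this proposition in Proposition~\ref{proposition:degree_formula}.
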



\subsection{The resolution adapted to a projection} \label{subsec:adapted resolution}
    
Let $\ell \colon (X,0) \to (\C^2,0)$ be a generic projection as in Section \ref{subsec:projection} and let $\pi \colon X_{\pi} \to X$ be a good resolution of $(X,0)$. 
Consider the minimal sequence $\sigma_{\pi} \colon Y_{\sigma_\pi} \to \C^2$ of blowups of points starting with the blowup of the origin of $\C^2$  such that $\widetilde{\ell}\big(V(\Gamma_{\pi})\big) \subset  V(\Gamma_{\sigma})$.
Now let $\tilde{\pi} \colon X_{\tilde\pi} \to X$ be the good resolution of $(X,0)$ obtained by pulling back $\sigma_{\pi}$, through $\ell$, normalizing the resulting surface, and then resolving the remaining singularities. 
Denote by $\ell_{\pi} \colon  X_{\tilde\pi} \to Y_{\sigma_\pi}$ the  projection morphism, so that $\sigma_{\pi} \circ \ell_{\pi} =  \ell \circ \tilde{\pi}$.
The resolution  $\tilde{\pi}$ factors through $\pi$, and if we denote by $\alpha_{\pi} \colon  X_{\tilde\pi} \to X_{\pi}$ the resulting morphism we obtain the following commutative diagram: 
\[
\xymatrix{  X_{\tilde\pi}   \ar@/^1.2pc/[rr]^{\tilde{\pi}}   \ar[r]^{\alpha_{\pi}}  \ar[rd]^{{\ell}_{\pi}} & X_{\pi} \ar[r]^{\pi} & X \ar[d]^{{\ell}}   \\
       & Y_{\sigma_\pi} \ar[r]^{\sigma_{\pi}}  & \C^2}
\]
By construction, the following properties are satisfied: 
\begin{enumerate}
\item For every vertex $v$ of $\Gamma_{\pi}$, we have $\tilde \ell(v) \in V(\Gamma_{\sigma_\pi})$;
\item For every vertex $\nu$ of $\Gamma_{\sigma_\pi}$, we have $\tilde{\ell}^{-1}(\nu) \subset V(\Gamma_{\tilde{\pi}})$.
\end{enumerate}

\begin{definition} \label{def:adapted}
We call the map $\tilde{\pi} \colon X_{\tilde\pi} \to X$ defined as above the resolution of $(X,0)$ \emph{adapted to $\pi$ and $\ell$}.
\end{definition}

As part of the data of the adapted resolution we also keep the morphisms $\sigma_\pi\colon Y_{\sigma_\pi} \to \C^2$ and $\ell_\pi\colon X_{\tilde{\pi}}\to Y_{\sigma_\pi}$.

Observe that if the good resolution $\pi$ factors through the Nash transform of $(X,0)$ and if $\ell$ is generic with respect to $\pi$, then $\ell$ is also generic with respect to the adapted resolution $\tilde{\pi}$.

\begin{example}  \label{ex:A2}
	Consider the hypersurface $(X,0) \subset (\C^3,0)$ defined by the equation $x^2+y^2+z^3=0$. 
	This is the standard singularity $A_2$. 
	The exceptional divisor of its minimal resolution $\pi \colon X_{\pi} \to X$ consists of two $\mathbb P^1$-curves $E_{v_0}$ and $E_{v'_0}$ that intersect transversally at a point; its dual graph $\Gamma_\pi$ is depicted on the right of Figure~\ref{fig:A2}.
	To see this, we consider the generic projection  $\ell=(y,z) \colon (X,0) \to (\C^2,0)$ and we perform Laufer's algorithm from \cite{Laufer1972} (already used in Example \ref{example:E8_graph}.
	Denote by $\sigma \colon Y \to \C^2$ the minimal embedded resolution of the discriminant curve $\Delta$ of $\ell$, which is the cusp $y^2+z^3=0$; its dual graph $\Gamma_\sigma$ is the tree depicted on the left of Figure~\ref{fig:A2}, with vertices labeled $\nu_0, \nu_1, \nu_2$ in their order of appearance in the sequence of blowups.
	Then, performing Laufer's algorithm, we obtain the resolution $\pi' \colon X_{\pi'} \to X$ whose dual graph $\Gamma_\pi$ is at the middle of Figure~\ref{fig:A2}.
	Its vertices are labeled $v_0, v'_0, v_1$ and $v_2$, and we have  $\tilde{\ell}(v_0) = \tilde{\ell}(v'_0) = \nu_0$,  $\tilde{\ell}(v_1)  = \nu_1$ and  $\tilde{\ell}(v_2) = \nu_2$. 
	Denote by $\pi''$ the good resolution of $(X,0)$ obtained by blowing down the exceptional curve $E_{v_1}$.
	Then $\pi''$ is the minimal good resolution of $(X,0)$ that factors through its Nash transform of $(X,0)$.
	If we further blow down the exceptional curve $E_{v_2}$, which has self-intersection -1 in $X_{\pi''}$, we obtain the minimal resolution $\pi$ of $(X,0)$ that we described before. 
	\begin{figure}[h] 
		\centering
		\begin{tikzpicture}  
		
		\draw[thin ](-1,0)--(1,0);
		\draw[fill ] (-1,0)circle(2pt);
		\draw[fill ] (0,0)circle(2pt);
		\draw[fill ] (1,0)circle(2pt);
		
		\draw[thin,>-stealth,->](0,0)--+(0.7,0.7);
		\node(a)at(1,1){ $\Delta^*$};

		\node(a)at(1,-0.3){ $-3$};
		\node(a)at(-1,-0.3){ $-2$};
		\node(a)at(0,-0.3){ $-1$};
		
		\node(a)at(1,-0.6){ $\nu_0$};
		\node(a)at(-1,-0.6){ $\nu_1$};
		\node(a)at(0,-0.6){ $\nu_2$};
		\node(a)at(0,-1.2){$\Gamma_{\sigma}$};
		
		\begin{scope}[xshift=3.5cm]
		
		\draw[thin ](-1,0)--(1,0);
		\draw[thin ](0,0)--(0,1);
		
		\draw[fill ] (-1,0)circle(2pt);
		\draw[fill ] (0,0)circle(2pt);
		\draw[fill ] (1,0)circle(2pt);
		\draw[fill ] (0,1)circle(2pt);
		

		\node(a)at(1,-0.3){ $-3$};
		\node(a)at(-1,-0.3){ $-3$};
		\node(a)at(0,-0.3){ $-2$};
		\node(a)at(-0.4,1){ $-1$};
		
		\node(a)at(1,-0.6){ $v'_0$};
		\node(a)at(-1,-0.6){ $v_0$};
		\node(a)at(0,-0.6){ $v_2$};
		\node(a)at(0.3,1){ $v_1$};
		\node(a)at(0,-1.2){$\Gamma_{\pi'}$};
		
		\end{scope}
		
				\begin{scope}[xshift=7cm]
		
		\draw[thin ](-1,0)--(1,0);
		
		\draw[fill ] (-1,0)circle(2pt);
		\draw[fill ] (0,0)circle(2pt);
		\draw[fill ] (1,0)circle(2pt);
		

		\node(a)at(1,-0.3){ $-3$};
		\node(a)at(-1,-0.3){ $-3$};
		\node(a)at(0,-0.3){ $-1$};
		
		\node(a)at(1,-0.6){ $v'_0$};
		\node(a)at(-1,-0.6){ $v_0$};
		\node(a)at(0,-0.6){ $v_2$};
		\node(a)at(0,-1.2){$\Gamma_{\pi''}$};
		
		\end{scope}
		
		\begin{scope}[xshift=9.5cm]
		
		\draw[thin ](0,0)--(1,0);
		
		\draw[fill ] (0,0)circle(2pt);
		\draw[fill ] (1,0)circle(2pt);
		

		\node(a)at(1,-0.3){ $-2$};
		\node(a)at(0,-0.3){ $-2$};
		
		\node(a)at(1,-0.6){ $v'_0$};
		\node(a)at(0,-0.6){ $v_0$};
		\node(a)at(0.5,-1.2){$\Gamma_{\pi}$};
		
		\end{scope}
		\end{tikzpicture} 
		\caption{ }\label{fig:A2}
	\end{figure}

	\noindent	Observe that, since $\tilde{\ell}(v_0) = \tilde{\ell}(v'_0) = \nu_0$, the resolution $\pi$ is already adapted to itself and to $\ell$, and in this case $\sigma_\pi$ is the blowup of $\C^2$ at the origin, so that $\Gamma_{\sigma_\pi}=\{\nu_0\}$; observe that $\widetilde\ell(\Gamma_\pi)$ is not mapped into $\Gamma_{\sigma_\pi}$.
	However, $\sigma_{\pi''} = \sigma$, hence the resolution adapted to $\pi''$ and $\ell$ is $\pi'$. 
	This time we have $\widetilde\ell(\Gamma_{\pi''})\subset\Gamma_{\sigma_{\pi''}}$, which was to be expected since $\pi''$ factors through the Nash transform of $(X,0)$.
\end{example}

\subsection{The local degree formula} \label{subsec:degree}
 
Let $(X,0)$ be a complex surface germ.
For each divisorial valuation $v$ in $\NL(X,0)$, we are going to define the local degree $\deg(v) \in \N^*$ of $v$, which is an integer that measures the local topological degree at $v$ of a generic projection of $(X,0)$ onto $(\C^2,0)$.

Let $\pi \colon X_{\pi} \to X $ be a good resolution of $(X,0)$ that factors through its Nash transform and such that $v$ is associated with a component $E_v$ of its exceptional divisor $\pi^{-1}(0)$. 
Pick a projection $\ell \colon (X,0) \to (\C^2,0)$ which is generic with respect to $\pi$, let $\tilde{\pi} \colon X_{\tilde{\pi}} \to X$ be the resolution of $(X,0)$ adapted to $\pi$ and $\ell$, and let $ \ell_{\pi}  \colon X_{\tilde{\pi}} \to Y_{\sigma_\pi}$ be the natural morphism (see Definition~\ref{def:adapted}).
Set $\nu= \widetilde{\ell}(v)$ and let us denote by $C_\nu$ the component of $\sigma_{\pi}^{-1}(0)$ associated with $\nu$.
Note that ${\ell}_{\pi}(E_v) =  C_{\nu}$. 
For each component $E_i$ of $\tilde\pi^{-1}(0)$ (respectively $C_j$ of  $\sigma_{\pi}^{-1}(0)$), let us choose a tubular neighborhood disc bundle $N(E_i)$ (resp. $N(C_j)$), and consider the sets $\cal N(E_v)$ and $\cal N({C_\nu})$ introduced in Section~\ref{subsection_monodromy_lengths}.  
We can then adjust the disc bundles $N(E_i)$ and $N(C_j)$ in such a way that the cover $\ell$ restricts to a cover  
\[
\ell_v \colon \tilde\pi\big(\cal N(E_v)\big) \to   \sigma_{\pi}\big(\cal N(C_\nu)\big)
\]
branched precisely on the polar curve of $\ell$ (if $v$ is not a $\cal P$-node, the branching locus is just the origin).
Its degree only depends on $v$ and not on the choice of a generic projection $\ell$.

\begin{definition}  \label{def:degree} 
The \emph{local degree} $\deg(v)$ of $v$ is the degree $\deg(v) = \deg({\ell_v})$ of the cover $\ell_v$.
\end{definition}

We now prove two simple lemmas about the local degrees.
If $\pi\colon X_\pi\to X$ is a good resolution of $(X,0)$ that factors through the blowup of the maximal ideal and through the Nash transform of $(X,0)$, we denote by $V_N(\Gamma_\pi)$ the set of \emph{nodes} of $\Gamma_\pi$, that is the subset of $V(\Gamma_\pi)$ consisting of the $\cal P$-nodes, the $\cal L$-nodes, and of all the vertices of genus strictly grater than zero or whose valency in $\Gamma_\pi$ is at least three.
 
\begin{lemma}  \label{lemma:deg}
Let $\pi \colon X_\pi \to X$ be a good resolution of $(X,0)$ that factors through the blowup of the maximal ideal and through the Nash transform of $(X,0)$.
Then $\deg$ is constant on each connected component of $\Gamma_\pi \setminus V_N(\Gamma_{\pi})$.
\end{lemma}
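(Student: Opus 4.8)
The plan is to show that $\deg(v)$ does not change as we move along an edge of $\Gamma_\pi$ whose interior avoids the nodes, and then conclude by connectedness. So let $\cal S$ be a connected component of $\Gamma_\pi\setminus V_N(\Gamma_\pi)$; since every edge of $\Gamma_\pi$ is a string and $\cal S$ is obtained from strings by gluing at non-node vertices, it suffices to prove that $\deg$ takes the same value at the two divisorial valuations $v,v'$ corresponding to the endpoints of a single edge $e=[v,v']$ of $\Gamma_\pi$, under the assumption that at least one of $v,v'$ is not a node (more precisely, that the interior point of $e$ and, say, $v'$ are not nodes). Actually it is cleanest to argue directly for an arbitrary pair of divisorial valuations $v,v'$ lying in the closure of the same component $\cal S$ and connected by a segment of $\cal S$: refining $\pi$ if necessary (which is harmless by Lemma~\ref{lemma_reduction_bigger_modification}-type stability, or simply because the local degree is defined intrinsically on divisorial valuations), we may assume $v$ and $v'$ are adjacent vertices of $\Gamma_\pi$ and the open edge between them contains no node.

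The key computation is the following. Fix a projection $\ell$ generic with respect to $\pi$ and pass to the adapted resolution $\tilde\pi\colon X_{\tilde\pi}\to X$ together with $\ell_\pi\colon X_{\tilde\pi}\to Y_{\sigma_\pi}$ and $\sigma_\pi\colon Y_{\sigma_\pi}\to\C^2$ as in Section~\ref{subsec:adapted resolution}. Write $\nu=\widetilde\ell(v)$ and $\nu'=\widetilde\ell(v')$. Since the open edge $(v,v')$ contains no $\cal P$-node, the polar curve of $\ell$ does not meet the corresponding annular piece $\cal N(e)$, so the cover $\ell_\pi$ restricted over a neighbourhood of the corresponding point of $\Gamma_{\sigma_\pi}$ is unbranched; in fact $\widetilde\ell$ maps $e$ either to a single point of $\Gamma_{\sigma_\pi}$ or to an edge $[\nu,\nu']$, and by the description of $\widetilde\ell$ as a branched covering of graphs (end of Section~\ref{subsec:projection}) the restriction of $\ell_\pi$ to the union of plumbing pieces over $e$ is a genuine topological covering. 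I would then use the multiplicativity of covering degrees in the plumbing picture: the local degree $\deg(v)$ is the degree of the cover $\ell_v\colon\tilde\pi(\cal N(E_v))\to\sigma_\pi(\cal N(C_\nu))$, and I claim this is computed, piece by piece, from the degrees of the induced covers on the $3$-manifold pieces $\partial\cal N(E_v)$, the solid-torus/disc-bundle pieces, and the annular gluing regions, all of which fit together consistently along $e$. Concretely: $\deg(\ell_v)\cdot(\text{number of components of }\ell_v^{-1}\text{ over a small transverse disc})$ relates to $\deg(\ell_{v'})$ via the covering of the annulus $F_e$, and since crossing an edge with no node changes none of the relevant combinatorial data on the $\C^2$ side nor the ramification data of $\ell_\pi$, one gets $\deg(v)=\deg(v')$.

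An alternative, perhaps more robust, route that I would prefer to write up: compute $\deg(v)$ valuatively. The cover $\ell_v$ has degree equal to the local topological degree at $E_v$ of $\ell\circ\tilde\pi$ relative to $\sigma_\pi$, and this can be read off from multiplicities and the covering $\widetilde\ell\colon\Gamma_\pi\to\Gamma_{\sigma_\pi}$. Indeed, if $\widetilde\ell(e)=[\nu,\nu']$ is an edge, then along $e$ the local degree is the ramification index of the covering of graphs $\widetilde\ell|_e$, which is the same at $v$ and $v'$ because $e$ maps homeomorphically onto $[\nu,\nu']$ (there is a single edge, hence a single local branch); if instead $\widetilde\ell$ collapses $e$ to a point $\nu$, then $\deg$ is locally governed by the index of the cyclic quasi-ordinary cover over $C_\nu$, which again is locally constant along $e$ because no branching of $\ell$ occurs over the interior of $e$ and no splitting of $C_\nu$'s normal disc into several $\cal P$- or $\cal L$-strata happens there. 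Either way the value is unchanged.

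The main obstacle is making the "piece-by-piece" covering-degree bookkeeping precise without unfolding the entire Hirzebruch--Jung/Laufer machinery: one must be careful that the local degree as defined in Definition~\ref{def:degree} via the surface cover $\tilde\pi(\cal N(E_v))\to\sigma_\pi(\cal N(C_\nu))$ genuinely agrees with the degree computed from the graph covering $\widetilde\ell$, and that the absence of nodes in the interior of $e$ really does prevent any change (in particular that no $\cal P$-node — hence no polar branch, hence no branching of $\ell_v$ — and no $\cal L$-node — hence no change in the Rees/multiplicity data — is hidden there). I expect this to require invoking the structure of $\tilde\pi$ as a normalized pull-back of $\sigma_\pi$: over a regular part of $\Gamma_{\sigma_\pi}$ the surface $X_{\tilde\pi}$ has only quasi-ordinary (hence cyclic-quotient) singularities and its minimal resolution inserts only strings, along each of which the covering degree onto the corresponding string of $\Gamma_{\sigma_\pi}$ is constant. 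Granting that structural input, the proof reduces to the elementary remark that a string maps to a string (or a point) under a covering of graphs with constant ramification index, and locally constancy of $\deg$ on $\Gamma_\pi\setminus V_N(\Gamma_\pi)$ follows.
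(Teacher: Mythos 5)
Your proposal correctly isolates the one ingredient that makes this lemma work: since the interior of $\cal S$ contains no $\cal P$-node, no branch of the polar curve enters the associated plumbing piece, so the relevant restriction of $\ell$ is an unbranched cover. That is exactly what the paper exploits. But where you then launch into an edge-by-edge ``multiplicativity of covering degrees'' argument (your first route), the paper does something much more direct: it sets $\cal N(\cal S) = N(e)$ if $\cal S$ is the interior of a single edge $e$, and $\cal N(\cal S) = \bigcup_{v\in\cal S} N(E_v)$ otherwise, observes that the absence of $\cal P$-nodes in $\cal S$ implies $\ell|_{\pi(\cal N(\cal S))}$ is a regular cover of some degree $d$ away from $0$, and then notes that for \emph{any} divisorial $v$ in the interior of $\cal S$ (after refining so $v$ is a vertex), the cover $\ell_v$ of Definition~\ref{def:degree} is literally a restriction of $\ell_{\cal S}$ to $\cal N(E_v)$. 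That immediately gives $\deg(v)=d$ for all such $v$, without any piecewise comparison across edges. The paper's proof is about five lines.

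Your second route I would discourage. You propose to read $\deg(v)$ off from the ramification index of the graph covering $\widetilde\ell\colon\Gamma_\pi\to\Gamma_{\sigma_\pi}$, but the local degree as defined in Definition~\ref{def:degree} is the degree of a cover between $4$-dimensional pieces $\tilde\pi(\cal N(E_v))\to\sigma_\pi(\cal N(C_\nu))$, and this is \emph{not} a priori the ramification index of the dual graph covering at the single vertex $v$: several vertices of $\Gamma_{\tilde\pi}$ can lie over $\nu$ and collectively contribute to the fibers of $\ell_v$ (this is precisely the phenomenon that Lemma~\ref{lem: degree formula 2} later has to contend with). You recognize this as the ``main obstacle,'' and indeed it would require an argument of the length of Lemma~\ref{lem: degree formula 2} just to set up; the paper avoids the whole issue by never passing through $\widetilde\ell$. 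In short: right mechanism, unnecessarily heavy scaffolding. The missing move is to run the unbranched-cover observation globally over the whole connected component $\cal S$ at once, rather than locally edge-by-edge, so that the local degrees at all interior divisorial points of $\cal S$ are manifestly restrictions of one and the same cover.
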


\begin{proof} 
	Given a string $\cal S$ of $\Gamma_\pi$, we set $\cal N(\cal S) = N(e)$ if $\cal S$ is an edge $e$ of $\Gamma_\pi$, and $\cal N(\cal S) = \bigcup_v N(E_{v})$, where the union is taken over the set of vertices of $\Gamma_\pi$ contained in $\cal S$, otherwise.
	By definition, a connected component $\cal S$ of $  \Gamma_{\pi } \setminus V_N(\Gamma_{\pi})$ is a string of $\Gamma_\pi$.
	In particular, as $\cal S$ contains no $\cal P$-nodes, the polar curve of $\ell$ does not intersect $\cal N(\cal S) \setminus \{0\}$, and so the restriction $ \ell_{\cal S} = \ell|_{\cal N(\cal S)}$ of $\ell_{\cal S}$ to $\cal N(\cal S)$ is a regular cover outside $0$.
	Denote by $d$ the degree of this cover. 
	If $v$ is any divisorial point in the interior of $\cal S$, by further blowing up double points of the exceptional divisor we can assume without loss of generality that $v$ is a vertex of $\Gamma_\pi$.
	Then $\ell_v$ is the restriction of $\ell_{\cal S}$ to $\cal N(E_v)$, and therefore $\deg(v) = d$.  
\end{proof}

\begin{remark}
	Although this is not needed in the paper, it is worth noticing that Lemma~\ref{lemma:deg} allows us to extend the local degree to a map $\deg \colon \NL(X,0)\to\Z$ on the whole non-archimedean link $\NL(X,0)$.
	This map can be characterized as the unique upper semi-continuous map on $\NL(X,0)$ that takes the value $\deg(v)$ on any divisorial valuation $v$.
\end{remark}

As a consequence of Lemma~\ref{lemma:deg}, if $\pi$ is a resolution of $(X,0)$ that factors through the blowup of the maximal ideal  the Nash transform of $(X,0)$ then we can define $\deg(e)$ for any edge $e$ of the dual graph $\Gamma_{\pi}$ by setting $\deg(e) = \deg(v)$, where $v$ is any divisorial point in the interior of $e$. 
The following result gives an alternative way to compute local degrees.

\begin{lemma}\label{lem: degree formula 2} 
Let $\pi\colon X_\pi\to X$ be a good resolution of $(X,0)$  which factors through the blowup of the maximal ideal and through the Nash transform of $(X,0)$, let $\ell\colon (X,0)\to(\C^2,0)$ be a generic projection with respect to $\pi$, let $\tilde{\pi} \colon X_{\tilde\pi} \to X$ be the resolution of $(X,0)$ adapted to $\pi$ and $\ell$, let $v$ be a vertex of $\Gamma_{\pi}$, and let $e$ be an edge in $\Gamma_{\pi}$ adjacent to $v$. 
Denote by $W_{v,e}$ the set of the edges $e'$ of $\Gamma_{\tilde{\pi}}$ that are adjacent to $v$ and such that  $\widetilde \ell(e')  \subset \widetilde \ell(e)$.
Then we have
\[
\deg(v) = \sum_{e' \in W_{v,e}} \deg(e').
\]
\end{lemma}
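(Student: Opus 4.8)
The strategy is to translate the statement into an equality of topological degrees of restrictions of the finite cover $\ell$, using the tubular neighborhoods introduced in Section~\ref{subsection_monodromy_lengths}. First I would set up notation: let $\nu = \widetilde\ell(v) \in V(\Gamma_{\sigma_\pi})$ and let $\eta = \widetilde\ell(e)$, which by the properties recorded after Definition~\ref{def:adapted} is either a vertex of $\Gamma_{\sigma_\pi}$ or an edge of $\Gamma_{\sigma_\pi}$ adjacent to $\nu$. Since $\ell$ is generic with respect to $\pi$, it is also generic with respect to the adapted resolution $\tilde\pi$, so the local degrees $\deg(e')$ for $e' \in W_{v,e}$ are well defined via Definition~\ref{def:degree} and Lemma~\ref{lemma:deg}. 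The key geometric input is that $\ell_\pi \colon X_{\tilde\pi}\to Y_{\sigma_\pi}$ maps $E_v$ onto $C_\nu$ and, after adjusting the disc bundles, restricts to a finite cover $\ell_v\colon \tilde\pi\big(\cal N(E_v)\big)\to\sigma_\pi\big(\cal N(C_\nu)\big)$ whose degree is $\deg(v)$.

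The heart of the argument is a local count near the point of $E_v$ over which the relevant part of $\partial\cal N(E_v)$ sits. Concretely, fix a small disc-fiber $D$ of the normal bundle $N(C_\nu)$ that meets the part of $\partial\cal N(C_\nu)$ corresponding to the direction $\eta$ — i.e. $D$ is a meridian disc at the point $C_\nu\cap C_{\eta}$ if $\eta$ is an edge, or a generic meridian disc of $C_\nu$ if $\eta=\nu$. Then $\deg(v)$ counts the number of preimages (with their local covering multiplicities, but away from the polar curve the cover is unramified, so these are $1$) of such a disc inside $\tilde\pi\big(\cal N(E_v)\big)$; since $\cal S$ contains no $\cal P$-node the polar curve does not enter the relevant region. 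On the other hand, by construction of the adapted resolution, the meridian discs of $C_\nu$ over the direction $\eta$ pull back precisely to the meridian discs of $E_v$ lying in the directions $e'$ with $\widetilde\ell(e')\subset\eta$, that is, to the directions indexed by $W_{v,e}$, and the number of sheets of $\ell$ over each such $e'$-region is exactly $\deg(e')$ by definition of the local degree of an edge. Summing over the disjoint pieces of $\cal N(E_v)$ associated with the various $e'\in W_{v,e}$ gives $\deg(v)=\sum_{e'\in W_{v,e}}\deg(e')$. I would phrase this cleanly by writing $\cal N(E_v)$ (restricted to the part of $\partial \cal N(E_v)$ lying over the $\eta$-portion of $\partial\cal N(C_\nu)$) as the disjoint union, over $e'\in W_{v,e}$, of the pieces $\cal N(e')$ (or the subbundles of $\cal N(E_v)$ cut out by the directions of the $e'$), and observing that $\ell$ restricts on each such piece to a cover of degree $\deg(e')$ over the corresponding single piece of $\sigma_\pi\big(\cal N(C_\nu)\big)$.

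The main obstacle I anticipate is bookkeeping rather than conceptual: one must verify that the decomposition of the boundary of the disc bundle $N(E_v)$ induced by pulling back the decomposition of $\partial N(C_\nu)$ along $\ell_\pi$ matches exactly the combinatorial set $W_{v,e}$ — i.e. that an edge $e'$ of $\Gamma_{\tilde\pi}$ adjacent to $v$ contributes to the $\eta$-portion of $\partial N(E_v)$ if and only if $\widetilde\ell(e')\subset\widetilde\ell(e)$ — and that the disc bundles can be simultaneously adjusted so that $\ell$ is a genuine (branched) covering on each piece. This is exactly the sort of statement guaranteed by the construction of the adapted resolution and by the Hirzebruch--Jung normalization used there, so I would cite that construction and the properties (1)--(2) following Definition~\ref{def:adapted}, together with the argument already used in the proof of Lemma~\ref{lemma:deg}, to handle it. A minor additional point is to check the degenerate case $\eta = \nu$ (when $\widetilde\ell(\Gamma_\pi)$ is not contained in $\Gamma_{\sigma_\pi}$, as in Example~\ref{ex:A2}), where the sum over $W_{v,e}$ still accounts for all the sheets of $\ell_v$ in the direction $e$; this follows from the same local picture.
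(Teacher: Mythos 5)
Your plan is correct in spirit and parallels the paper's proof: both compute $\deg(v)$ and $\sum_{e'}\deg(e')$ as the number of preimages of a well-chosen base point of the branched covering $\ell$, then invoke local constancy of the covering degree across the boundary between the $v$-piece and the $e'$-pieces. The route differs in the ambient object. The paper works inside the two-dimensional Milnor fiber $F_t$ of a generic linear form $h=z_1|_X$: it identifies the region of interest with a disjoint union of annuli on which $\ell$ restricts to an honest cover $\ell'\colon\bigcup_{e'\in W_{v,e}}F_{e'}\to F'_{\widetilde\ell(e)}$ and evaluates $\deg(\ell')$ at boundary circles on the two ends of the string $\widetilde\ell(e)$ (giving $\deg(v)$ at one end, $\sum_{e'}\deg(w')$ at the other), followed by a preliminary blowup ensuring that the far endpoint $w'$ of each $e'$ is a valency-two non-node vertex so that $\deg(e')=\deg(w')$. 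You instead work with the full four-real-dimensional tubular neighborhoods and meridian discs of $N(C_\nu)$. This is viable in principle, but it is less precise: $\ell$ does not respect the fibration by meridian discs, so the preimage of a disc fiber need not be a union of disc fibers, and one really must count preimages of a \emph{point} near the relevant boundary component; the Milnor-fiber formulation is exactly what makes that count clean. Two smaller points: your degenerate case $\eta=\nu$ does not occur, because under the hypothesis that $\pi$ (hence $\tilde\pi$) factors through the Nash transform the map $\widetilde\ell|_{\Gamma_{\tilde\pi}}$ is a covering of graphs onto its image and never collapses an edge (the behavior of Example~\ref{ex:A2} you cite only happens for a resolution not factoring through the Nash transform); and you do not spell out why the count over each $e'$-piece is exactly $\deg(e')$, which needs the node-separation blowup mentioned above.
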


\begin{proof} 
Consider an embedding $(X,0) \subset (\C^n,0)$ and choose coordinates $(z_1\dots,z_n)$ on $\C^n$ in such a way that that $z_1$ and $z_2$ are generic linear forms and
$\ell=(z_1,z_2)\colon X\to \C^2$ is our generic linear projection.  
We consider  the generic linear form $h = z_1|_X \colon (X,0) \to (\C,0)$ on $(X,0)$.   
As in \cite[Section 4]{BirbrairNeumannPichon2014}, instead of considering a standard Milnor ball defined via the standard round ball in $\C^n$, we will consider a Milnor tube $ T_{R\epsilon, \epsilon}$ of $h$.
More precisely, given some sufficiently small
$\epsilon_0$ and some $R>0$, for every $\epsilon\le\epsilon_0$ we define
\[
B_\epsilon=\big\{(z_1,\dots,z_n)\,\big|\,|z_1|\le \epsilon, |(z_1,\dots,z_n)|\le
R\epsilon\big\}\quad\text{and}\quad S_\epsilon=\partial B_\epsilon\,.
\]
By \cite[Proposition 4.1]{BirbrairNeumannPichon2014} we can choose $\epsilon_0$ and $R$ so that, as soon as $\epsilon\le \epsilon_0$, the fiber $h^{-1}(t)$ intersects the standard sphere $\big\{ (z_1, \ldots,z_n) \in \C^n \,\big|\, || (z_1,\ldots,z_n) || = R\epsilon\big\}$ transversely for $|t|\le \epsilon$, and the polar curve of $\ell$ meets $S_\epsilon$ in the part where $|z_1|=\epsilon$. 
We set $B'_{\epsilon} = \ell(B_{\epsilon})$. 
In what follows, the Milnor fibers of $z \colon (\C^2,0) \to (\C,0)$ are denoted by $F'$, while the Milnor fibers of $h$ are denoted by $F$. 
Note that by the choice of $B_{\epsilon}$ and $B'_{\epsilon}$, we have $\ell(F_t) = F'_t$ for every $t$ in $D_{\eta}^*$.
Write $e=[v,w]$ and set $\tilde{v} = \tilde{\ell}(v)$ and $\tilde{w} = \tilde{\ell}(w)$. 
Then $\tilde{\ell}(e)$ is a string joining $\tilde{v}$ to $ \tilde{w}$ in $\Gamma_{\sigma_{\pi}}$, and the intersection  $F'_{\tilde{\ell}(e)} =  F'_t  \cap \sigma_{\pi}\big( \cal N\big(\tilde{\ell}(e)\big)\big) $, where $\cal N\big(\tilde{\ell}(e)\big)$ for the string $\tilde{\ell}(e)$ is defined as at the beginning of the proof of Lemma \ref{lemma:deg}, is a disjoint union of annuli.
For each edge $e' =[v,w']$ of $W_{v,e}$, the intersection  $F_{e'} = F_t \cap \tilde{\pi} \big( \cal N(e')\big)$    is a disjoint union of annuli.
After adjusting the bundles $N(E_i)$ and $N(C_j)$ if necessary, we can assume that the cover $\ell$ restricts to a (possibly disconnected) regular cover
\[
\ell' \colon \bigcup_{e' \in W_{v,e}} F_{e'} \to F'_{\tilde{\ell}(e)}.
\] 
On the one hand, the maps $\ell'$ and $\ell_v$ coincide on the boundary of $ F'_{\tilde{\ell}(e)}$ that is inside $\cal N(C_{\tilde{v}})$, and by computing the degree of $\ell'$ over a point on this boundary we see that $\deg(\ell') = \deg({v})$.
On the other hand, computing the degree of $\ell'$ over a point on the boundary of $ F'_{\tilde{\ell}(e)}$ that is inside $\cal N(C_{\tilde{w}})$, we obtain
 \[
 \deg(\ell') = \sum_{e' \in W_{v,e}} \deg( \ell|_{F_{e'}}) =  \sum_{e' \in W_{v,e}}  \deg (w').
 \]
Finally, by blowing-up  all the intersections $E_v \cap E_{v'}$ where $v , v'$ belong to the set of nodes $V_N(\Gamma_\pi)$, we can assume that there are no adjacent nodes in $\Gamma_\pi$. 
Therefore, for each edge $e' =[v,w']$ of $W_{v,e}$, the vertex $w'$ has valency $2$, $\cal N(E_{w'}) \cap F_t$ is a disjoint union of annuli each of whom has common boundary with one of the annuli of $F_{e'}$, and so we have $\deg(e') = \deg(w')$.  
This proves the Lemma.
\end{proof}

The following proposition, which we call the local degree formula, will play a key role in our proof of Theorem~\ref{theorem_main}.

\begin{proposition}\label{proposition:degree_formula}
Let $\pi \colon X_{\pi} \to X$ be a good resolution of $(X,0)$ which factors through the blowup of the maximal ideal and through the Nash transform of $(X,0)$, let $\ell\colon (X,0)\to(\C^2,0)$ be a generic projection with respect to $\pi$, and let $e$ be an edge of $\Gamma_{\pi}$.   
Then 
\[
\mathrm{length} \big(\widetilde \ell (e)\big) = \deg(e)  \mathrm{length}(e).
\]
\end{proposition}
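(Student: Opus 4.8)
The plan is to relate the metric on $\Gamma_\pi$ to that on $\Gamma_{\sigma_\pi}$ by tracking how the skeletal metric interacts with the branched cover $\widetilde\ell$, and to express both lengths in terms of screw numbers of the monodromy of the Milnor fibration of a generic linear form, via Proposition~\ref{rk:dehn twist}. The key point is that the length of an edge is (the opposite of) a screw number, that screw numbers multiply by the covering degree under cyclic covers of annuli (Lemma~\ref{lem:twist2}), and that the "total" screw number of a disjoint union of annuli adds up (Lemma~\ref{lem:twist1}).

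First I would set up the adapted resolution $\tilde\pi\colon X_{\tilde\pi}\to X$ to $\pi$ and $\ell$, with $\ell_\pi\colon X_{\tilde\pi}\to Y_{\sigma_\pi}$, as in Section~\ref{subsec:adapted resolution}, and reduce to the case where $e$ itself is an edge of $\Gamma_{\tilde\pi}$: indeed, since $\deg$ is constant on strings (Lemma~\ref{lemma:deg}) and the length of a string is additive, it suffices to prove the formula after replacing $e$ by any edge $e'$ of $\Gamma_{\tilde\pi}$ lying over $e$, because $\mathrm{length}(e)=\sum_{e'}\mathrm{length}(e')$ summed over the edges $e'$ of $\Gamma_{\tilde\pi}$ subdividing $e$ (the inclusion $\Gamma_\pi\hookrightarrow\Gamma_{\tilde\pi}$ is an isometry) and, correspondingly on the target side, $\widetilde\ell(e)$ is subdivided in $\Gamma_{\sigma_\pi}$ into the images $\widetilde\ell(e')$. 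Moreover $\deg(e)=\sum$ of the $\deg(e')$ over the edges $e'$ lying over a fixed edge of $\widetilde\ell(e)$, by Lemma~\ref{lem: degree formula 2}. Combining these bookkeeping facts, the general statement follows once it is known for a single edge $e'$ of $\Gamma_{\tilde\pi}$ whose image $\widetilde\ell(e')$ is a single edge of $\Gamma_{\sigma_\pi}$.

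Next, for such an edge $e'=[v',w']$ of $\Gamma_{\tilde\pi}$, I would look at the corresponding intersection point $q'\in E_{v'}\cap E_{w'}$ and its image in $Y_{\sigma_\pi}$, which is an intersection point of the two exceptional curves $C_{\widetilde\ell(v')}$ and $C_{\widetilde\ell(w')}$. Taking $h=z_1|_X$ a generic linear form on $(X,0)$ as in the proof of Lemma~\ref{lem: degree formula 2}, the piece $F_{e'}=F_t\cap\cal N(e')$ of its Milnor fiber is a disjoint union of annuli cyclically permuted by the monodromy $\phi$, and likewise $F'_{\widetilde\ell(e')}$ is a disjoint union of annuli permuted by the monodromy $\phi'$ of the generic linear form on $(\C^2,0)$. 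The restriction of the cover $\ell$ identifies, after adjusting the tubular neighbourhoods, $F_{e'}$ with a cyclic cover of $F'_{\widetilde\ell(e')}$ of degree $\deg(e')$, intertwining $\phi$ with $\phi'$; the relevant cyclic-cover degree of the individual annuli (after passing to a common period $N$ and collapsing the cyclic permutation to a single annulus, using Lemma~\ref{lem:twist1} to add up the screw numbers of the concentric/disjoint pieces) is exactly $\deg(e')$. Applying Lemma~\ref{lem:twist2} to this situation gives that the screw number of $\phi'|_{F'_{\widetilde\ell(e')}}$ equals $\deg(e')$ times the screw number of $\phi|_{F_{e'}}$. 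By Proposition~\ref{rk:dehn twist} these screw numbers are $-\mathrm{length}\big(\widetilde\ell(e')\big)$ and $-\mathrm{length}(e')$ respectively, so $\mathrm{length}\big(\widetilde\ell(e')\big)=\deg(e')\,\mathrm{length}(e')$, which is the desired formula for a single edge.

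The main obstacle, and the point that needs to be carried out carefully, is the bookkeeping in the reduction step: one must make sure the degrees and lengths really do add up compatibly between $\Gamma_\pi$, $\Gamma_{\tilde\pi}$, and $\Gamma_{\sigma_\pi}$, i.e. that the sum over edges $e'$ of $\Gamma_{\tilde\pi}$ lying over $e$ of $\deg(e')\,\mathrm{length}(e')$ telescopes to $\deg(e)\,\mathrm{length}(e)$. For this one uses that $\widetilde\ell|_{\Gamma_{\tilde\pi}}$ is a covering of graphs onto its image (Section~\ref{subsec:projection}), that the $\deg(e')$ over the edges in a single fiber of $\widetilde\ell$ over an edge of $\widetilde\ell(e)$ sum to $\deg(e)$ by Lemma~\ref{lem: degree formula 2}, and that lengths are additive along the subdivision of $e$ into the $e'$; combining these, the contributions organize along the string $\widetilde\ell(e)$ edge by edge, each edge of $\widetilde\ell(e)$ receiving total length $\mathrm{length}(e)/(\text{number of its preimages, weighted by degree})^{-1}$ — in other words the per-edge identity integrates up to the per-string identity. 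A secondary technical point is the precise matching of the cyclic covers on the level of annuli (tracking the number $\gcd(m_{v'},m_{w'})$ of annulus components and the induced covering degrees on each), which is exactly the kind of Hirzebruch–Jung computation that underlies Lemma~\ref{lem: degree formula 2}, and which I would invoke rather than redo.
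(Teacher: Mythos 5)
Your proof takes the same approach as the paper's: interpret the lengths of $e$ and $\widetilde\ell(e)$ as opposites of the screw numbers of the Milnor monodromies $\phi$ and $\phi'$ (Proposition~\ref{rk:dehn twist}), intertwine the two monodromies through $\ell$, and conclude by the multiplicativity of screw numbers under cyclic covers (Lemma~\ref{lem:twist2}). The difference is one of bookkeeping. The paper works directly with the edge $e$ of $\Gamma_\pi$ and the whole string $\mathcal S = \widetilde\ell(e)$ in $\Gamma_{\sigma_\pi}$: it uses Lemma~\ref{lem:twist1} to aggregate the screw numbers of $\phi'$ over the concentric pieces $F_{e_i}$ along $\mathcal S$ and then applies Lemma~\ref{lem:twist2} a single time. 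You instead subdivide $e$ into edges $e'$ of $\Gamma_{\tilde\pi}$ whose images $\widetilde\ell(e')$ are single edges of $\Gamma_{\sigma_\pi}$ and reduce to a per-edge statement. That reduction is sound (the interior of $e$ remains a string in $\Gamma_{\tilde\pi}$, so $\deg$ is constant on it by Lemma~\ref{lemma:deg}, and lengths are additive on both sides), and it has the small advantage that the string-aggregation step is replaced by trivial additivity of edge lengths, so you do not actually need Lemma~\ref{lem:twist1} for that purpose.

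That makes your invocation of Lemma~\ref{lem:twist1} suspicious, and indeed it is a misuse: you call on it to ``collapse the cyclic permutation to a single annulus'', but Lemma~\ref{lem:twist1} decomposes one annulus into two concentric sub-annuli preserved by the map; it says nothing about quotienting a cyclic permutation of several disjoint annuli. The real subtlety you are gesturing at --- that $F_{e'}$ and $F'_{\widetilde\ell(e')}$ are disjoint unions of possibly different numbers of annuli, so Lemma~\ref{lem:twist2}, stated for a cyclic cover of a single annulus by a single annulus, does not apply verbatim --- is genuine, but Lemma~\ref{lem:twist1} does not address it. The paper's way out is to pass to the iterates $\phi^{K'}$ and $(\phi')^{K'}$, with $K'$ the number of annulus components on the $(\mathbb C^2,0)$ side, so that the target-side monodromy preserves each annulus, and to apply Lemma~\ref{lem:twist2} to those iterates; you need the analogous step in your per-edge argument before invoking Lemma~\ref{lem:twist2}.
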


Observe that $\widetilde{\ell}(e)$ in general is not an edge of a dual graph of a sequence of blowups above the origin of $\C^2$, but only a string of edges.
Here we consider its length for the skeletal metric of $\NL(\C^2,0)$.

\begin{proof}
We use again the linear form $h = z_1 |_X$ and Milnor balls $B_{\epsilon}$ and $B'_{\epsilon} = \ell (B_{\epsilon})$ as at the beginning of the proof of Lemma \ref{lem: degree formula 2}.
Consider the fibers $F_t= h^{-1}(t) \cap B_{\epsilon}$ in $(X,0)$ and   $F'_t=\{z_1=t\} \cap B'_{\epsilon}$ in $(\C^2,0)$.
Denote by $v$ and $v'$ the two vertices of $\Gamma_\pi$ adjacent to $e$.
Let $\tilde{\pi} \colon X_{\tilde \pi} \to  X$ be the resolution of $(X,0)$ adapted to $\pi$ and $\ell$, and consider the commutative diagram $\ell \circ \tilde{\pi} = \ell_{\pi} \circ \sigma_{\pi}$ introduced in Section~\ref{subsec:adapted resolution}. 
Consider the string $\cal S= \widetilde{\ell}(e)$ of the dual tree  $\Gamma_{\sigma_{\pi}}$ of $\sigma_{\pi}$, and name its vertices $\nu_1,\ldots,\nu_n$ following the order of the string from $\nu_1=\widetilde \ell(v)$ to $\nu_n=\widetilde \ell(v')$.
Let us re-define  $\sigma_{\pi}\big(\cal N(\cal S)\big) = B'_{\epsilon} \cap \sigma_{\pi}\big(\cal N(\cal S)\big)$ and $\pi\big(\cal N(e)\big)  =B_{\epsilon} \cap  \pi\big(\cal N(e)\big)$ and note that that $\deg(e)$ is by definition the degree of the restriction  $\ell{|_{\pi{\textstyle(}\cal N(e){\textstyle)}}} \colon \pi\big(\cal N(e)\big) \to  \sigma_{\pi}\big(\cal N(\cal S)\big) $.
Next, set $F_{e,t} = F_{t} \cap \pi\big(\cal N(e)\big)$ and $F_{\cal S,t} = F'_t \cap \sigma_{\pi}\big(\cal N(\cal S)\big)$. 
Observe that the cover $\ell$ restricts to a regular cover $\ell_e \colon F_e \to F_{\cal S}$, because there are no polar curves passing by $e$ since $\pi$ factors through the Nash transform of $(X,0)$.
On the one hand, $F_e$ is a disjoint union of  $K = \gcd(m_v, m_{v'})$ annuli, and by Proposition \ref{rk:dehn twist}, the monodromy  $\phi \colon F_t \cap B_{\epsilon} \to F_t \cap B_{\epsilon} $ of the Milnor--L\^e fiber of $h \colon (X,0) \to (\C,0)$  restricts to a  map $\phi_e \colon F_e \to F_e$ with screw number 
\[
\tau = - \frac{1}{m_{v}m_{v'}} = -\mathrm{length}(e).
\]
On the other hand, for each $i=1,\ldots,n-1$, denote by $e_i=[\nu_i,\nu_{i+1}]$ the edge of $\Gamma_{\sigma_\pi}$ connecting $\nu_i$ to $\nu_{i+1}$.
The number $K'=\gcd (m_{\nu_i}, m_{\nu_{i+1}})$ does not depend on the choice of  $i$, as can be verified by computing the intersection number on $Y_{\sigma_\pi}$ of the divisor $\sum_{j=1}^n m_{\nu_j}E_{\nu_j}$ with each of the $E_{\nu_i}$, for $i=2,\ldots,n-1$.
By Proposition~\ref{rk:dehn twist},  $F_{e_i} =  F'_t \cap  \sigma_{\pi}\big(\cal N(e_i)\big)$ is a disjoint union of  $K'$ annuli which are cyclically exchanged by the monodromy $\phi'  \colon F'_t \cap B'_{\epsilon} \to F'_t \cap B'_{\epsilon}$ of the Milnor--L\^e fiber of $z_1 \colon (\C^2,0) \to (\C,0)$ and the restriction  of $\phi' |_{F_{e_i}} \colon F_{e_i} \to F_{e_i}$ has screw number $-\mathrm{length}(e_i)$. 
Now, $F_{\cal S} $   is a disjoint union of  $K'$ annuli and the intersections  $F_{e_i}=F'_t \cap \sigma_{\pi}\big(\cal N(e_i)\big)$, for $i=1,\ldots,n-1$, are concentric unions of $K'$ annuli inside $F_{\cal S}$. 
Therefore, applying  Lemma \ref{lem:twist1} we obtain that the restriction  $\phi'|_{F_{\cal S}} \colon F_{\cal S} \to F_{\cal S}$ has screw number
\[
\tau' =  - \sum_{i=1}^{n-1} \mathrm{length}(e_i) = -  \mathrm{length}(\cal S).
\]
A representative of the monodromy  $\phi' \colon F'_t \to F'_t$   is the  first return diffeomorphism on $F'_t$ of a  $1$-dimensional flow transverse to all the Milnor fibers, lifted from the standard unit tangent vector field  to  the circle $S^1_{|t|}$. 
Since $h  =  z_1 \circ \ell $, a representative of the monodromy $\phi  \colon F_t  \to F_t$  of the Milnor--L\^e fibration of $h$ is obtained by lifting this flow by $\ell$ and by taking its first return diffeomorphism on $F_t$. 
Therefore, we have $\ell \circ \phi = \phi' \circ \ell$. 
Moreover,  the restriction $\ell|_{{F_e}} \colon F_e \to F_{\cal S}$ is a cover with degree $\deg(e)$.  
Applying Lemma \ref{lem:twist2} to $\phi^{K'}$ and $({\phi'})^{K'}$ we then obtain $\tau' = \deg(e) \tau$.  
This concludes the proof.
\end{proof}

\subsection{Lifting $\Delta$ under $\widetilde\ell$.} 
We now explain how to use adapted resolutions and the local degree formula to relate the Laplacian of $\calI_X$ to that of $\calI_{\C^2}$ via the generic projection $\widetilde{\ell}$.

\begin{proposition}\label{proposition_lifting_Delta}
Let  $\pi \colon X_{\pi} \to X$ be a good resolution of $(X,0)$ which factors through the blowup of the maximal ideal and through the Nash transform of $(X,0)$ and let $\ell\colon (X,0)\to(\C^2,0)$ be a generic projection with respect to $\pi$.
Let $\tilde\pi\colon X_{\tilde\pi}\to X$ be the good resolution of $(X,0)$ adapted to $\pi$ and $\ell$.
For each vertex $v$ of $V(\Gamma_{\pi})$, we have   
\[
\Delta_{\Gamma_{\tilde\pi}} \big(\cal I_X \big)(v) =  \deg(v) \Delta_{\Gamma_{\sigma_\pi}}\big(\cal I_{\C^2}\big)\big(\widetilde{\ell}(v)\big).
\]
\end{proposition}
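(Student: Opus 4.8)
The plan is to write each Laplacian as the sum of the outgoing slopes of the relevant inner rate function over the edges emanating from the vertex in question, to match these slopes edge by edge by means of the local degree formula (Proposition~\ref{proposition:degree_formula}), and then to collapse the edge-by-edge contributions using Lemma~\ref{lem: degree formula 2}.

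Since $\pi$ factors through the blowup of the maximal ideal and through the Nash transform of $(X,0)$ and $\tilde{\pi}$ dominates $\pi$, the resolution $\tilde{\pi}$ also has these two properties, and $\ell$ is generic with respect to $\tilde{\pi}$ by the observation following Definition~\ref{def:adapted}. Consequently $\calI_X$ is linear with integral slopes on each edge of $\Gamma_{\tilde\pi}$ (Lemma~\ref{lemma_inner_rate_function_NL}), and $\calI_X(w)=\calI_{\C^2}\big(\widetilde\ell(w)\big)$ for every vertex $w$ of $\Gamma_{\tilde\pi}$, because such a $w$ cannot lie in a connected component of $\NL(X,0)\setminus\Gamma_{\tilde\pi}$ (see the discussion following Lemma~\ref{lemma_inner_rate_function_NL}). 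Moreover, by property~(2) of the adapted resolution $\widetilde\ell$ sends the interior of any edge of $\Gamma_{\tilde\pi}$ off the vertices of $\Gamma_{\sigma_\pi}$, hence into the interior of a single edge; so each edge $e'$ of $\Gamma_{\tilde\pi}$ maps under $\widetilde\ell$ onto a segment contained in a well-defined edge $\bar e(e')$ of $\Gamma_{\sigma_\pi}$, of length $\length\big(\widetilde\ell(e')\big)=\deg(e')\length(e')$ by Proposition~\ref{proposition:degree_formula} applied to $\tilde{\pi}$.

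Fix now a vertex $v$ of $\Gamma_\pi$ and set $\nu=\widetilde\ell(v)$. Writing $s_{e'}$ for the outgoing slope of $\calI_X$ at $v$ along an edge $e'=[v,w']$ of $\Gamma_{\tilde\pi}$ and $t_{\bar e}$ for the outgoing slope of $\calI_{\C^2}$ at $\nu$ along an edge $\bar e$ of $\Gamma_{\sigma_\pi}$, we have $\Delta_{\Gamma_{\tilde\pi}}(\calI_X)(v)=\sum_{e'\ni v}s_{e'}$ and $\Delta_{\Gamma_{\sigma_\pi}}(\calI_{\C^2})(\nu)=\sum_{\bar e\ni\nu}t_{\bar e}$. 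The key point is that $s_{e'}=\deg(e')\,t_{\bar e(e')}$ for every $e'$: indeed $\calI_X$ is linear on $e'$, $\calI_{\C^2}$ is linear on the segment $\widetilde\ell(e')\subseteq\bar e(e')$ since an edge of $\Gamma_{\sigma_\pi}$ is a string (Corollary~\ref{cor:linearity on strings}), and using $\calI_X(v)=\calI_{\C^2}(\nu)$ and $\calI_X(w')=\calI_{\C^2}\big(\widetilde\ell(w')\big)$ we get
\[
s_{e'}=\frac{\calI_X(w')-\calI_X(v)}{\length(e')}=\frac{\calI_{\C^2}\big(\widetilde\ell(w')\big)-\calI_{\C^2}(\nu)}{\length(e')}=t_{\bar e(e')}\cdot\frac{\length\big(\widetilde\ell(e')\big)}{\length(e')}=\deg(e')\,t_{\bar e(e')}.
\]
Summing over $e'$ and grouping the edges of $\Gamma_{\tilde\pi}$ at $v$ according to which edge of $\Gamma_{\sigma_\pi}$ they are sent into, it remains only to prove that for every edge $\bar e$ of $\Gamma_{\sigma_\pi}$ adjacent to $\nu$ one has $\sum_{e'\ni v,\ \widetilde\ell(e')\subseteq\bar e}\deg(e')=\deg(v)$; granting this, $\Delta_{\Gamma_{\tilde\pi}}(\calI_X)(v)=\sum_{\bar e\ni\nu}\deg(v)\,t_{\bar e}=\deg(v)\,\Delta_{\Gamma_{\sigma_\pi}}(\calI_{\C^2})(\nu)$, as wanted.

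The remaining degree identity is where the main difficulty lies. When $\bar e$ is the first edge of the string $\widetilde\ell(e)$ for some edge $e$ of $\Gamma_\pi$ at $v$, the set $\{e'\ni v : \widetilde\ell(e')\subseteq\bar e\}$ coincides with the set $W_{v,e}$ appearing in Lemma~\ref{lem: degree formula 2} (each $\widetilde\ell(e')$ lies in a single edge, and $\bar e$ is the only edge of the string $\widetilde\ell(e)$ adjacent to $\nu$), and the identity is exactly the conclusion of that lemma. In general, however, $\widetilde\ell$ need not be simplicial on dual graphs, $\widetilde\ell(\Gamma_\pi)$ may be a proper subtree of $\Gamma_{\sigma_\pi}$, and $\Gamma_{\tilde\pi}$ may acquire, over $v$, new branches mapping to directions at $\nu$ that are not visible from $\Gamma_\pi$; to cover every edge $\bar e$ at $\nu$ one reduces to the previous case by applying Lemma~\ref{lem: degree formula 2} to a good resolution that dominates $\pi$, still factors through the blowup of the maximal ideal and the Nash transform, with respect to which $\ell$ remains generic, and which does possess an edge at $v$ pointing in the direction of $\bar e$, while invoking Lemma~\ref{lemma:deg} to ensure that the local degrees are unaffected by this enlargement. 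This bookkeeping — keeping track of how the edges of $\Gamma_{\tilde\pi}$ at $v$ distribute over the directions at $\nu$ and checking that the local degree $\deg(v)$ splits correctly along each of them — is the delicate part of the proof.
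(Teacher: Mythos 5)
You follow the same overall strategy as the paper: expand both Laplacians as sums of outgoing slopes, match each slope upstairs to the corresponding slope downstairs scaled by a local degree via Proposition~\ref{proposition:degree_formula}, and collapse the contributions, grouped by the directions at $\nu=\widetilde\ell(v)$, using the degree identity of Lemma~\ref{lem: degree formula 2}. The first half of your argument agrees with the paper's; the only cosmetic difference is that you use property~(2) of the adapted resolution to ensure that each edge of $\Gamma_{\tilde\pi}$ at $v$ maps into a single edge of $\Gamma_{\sigma_\pi}$, while the paper invokes the quasi-ordinary singularities appearing in the Hirzebruch--Jung construction of $\tilde\pi$ together with the fact that $\Pi^*$ avoids the double points of $\sigma_\pi^{-1}(0)$. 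Both routes are valid.

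The issue you raise in the last paragraph is real, but it is also present, implicitly, in the paper's own argument, which partitions the star of $v$ in $\Gamma_{\tilde\pi}$ by the vertices $\nu_j$ adjacent to $\nu$ in $\Gamma_{\sigma_\pi}$, notes that each part is nonempty (the cover is surjective onto each boundary component of $\cal N(C_\nu)$), and then cites Lemma~\ref{lem: degree formula 2} once per $j$ without verifying that each $[\nu,\nu_j]$ is realized as the first edge of $\widetilde\ell(e)$ for some edge $e$ of $\Gamma_\pi$ at $v$, which is what the statement of the lemma formally requires. Your proposed fix by enlarging $\pi$ is not the most economical, and as you yourself note it is not fully worked out: replacing $\pi$ changes $\sigma_\pi$ and $\tilde\pi$ along with it, so the degree data has to be tracked through all three changes, and Lemma~\ref{lemma:deg} alone does not account for the modified $\Gamma_{\sigma_\pi}$. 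The cleaner closure of this gap, which is essentially what the paper's argument reduces to, is to observe that the \emph{proof} of Lemma~\ref{lem: degree formula 2} --- computing the degree of the cover $\ell_v\colon\tilde\pi\big(\cal N(E_v)\big)\to\sigma_\pi\big(\cal N(C_\nu)\big)$ over a point near an arbitrary boundary component of $\cal N(C_\nu)$ --- works verbatim for each $j$; nothing in that degree count requires an edge of $\Gamma_\pi$ at $v$ pointing in the direction of $\nu_j$. With that reading, your argument is correct and mirrors the paper's.
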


\begin{proof} 
Let $\mathrm{E}_v$ denote the set of those edges of $\Gamma_{\tilde{\pi}}$ that are adjacent to $v$, and for every edge $e$ in $\mathrm{E}_v$, denote by $v_e$ the second extremity of $e$.
On the other hand, set $\nu=\widetilde \ell(v)$ and let $\nu_1, \ldots, \nu_r$ be the vertices of $\Gamma_{\sigma_{\pi}}$ which are adjacent to the vertex $\nu$. 
Since $\pi$ factors through the Nash transform of $(X,0)$, the strict transform of the polar curve of $\ell$ on $X_\pi$ is a union of curvettes of the irreducible components $E_v$, for $v$ ranging among the vertices of $\Gamma_{\pi}$. 
Therefore, the strict transform of the discriminant curve $\Delta = \ell(\Pi)$ by $\sigma_{\pi}$ intersects the exceptional divisor $\sigma_{\pi}^{-1}(0)$ of $\sigma_{\pi}$ at smooth points, not passing through the intersecting points $C_{\nu_i} \cap C_{\nu}$. 
This means that 
the remaining singularities over the points   $p_j=C_{\nu_j} \cap C_{\nu}$ after the pull-back of $\sigma_{\pi}$ by $\ell$ in the construction of Subsection~\ref{subsec:adapted resolution} are quasi-ordinary singularities branched over the curve germ $(C_{\nu_j} \cup C_{\nu}, p_j)$. 
Therefore, for each edge $e$ in $\mathrm{E}(v)$ the image $\tilde{\ell}(e)$ is contained in the edge $[\nu, \nu_j]$ for some $j$ and conversely, for each $j=1,\ldots,r$, the inverse image $\tilde{\ell}^{-1}\big([\nu, \nu_j]\big)$ of the edge $[\nu, \nu_j]$ through $\tilde{\ell}$ is a union of segments in $\Gamma_{\tilde{\pi}}$ that depart from $v$, and each such segment contains exactly one of the edges of $\mathrm{E}_v$.
For each $j$, let $e_{j,1}, \ldots, e_{j,k_j}$ be these vertices. 
We have 
$
\St_v = \coprod_{j=1}^r  \{ e_{j,1}, \ldots, e_{j,k_j}\},
$
and so the Laplacian of $\calI_X$ on $\Gamma_{\tilde\pi}$ at $v$ is
\[
\Delta_{\Gamma_{\tilde\pi}} \big(\cal I_X\big)(v)
 = 
\sum_{e \in \St_v} \frac{\cal I_X(v_e) -  \cal I_X(v)}{\mathrm{length}(e)}
 =
\sum_{j=1}^r \sum_{i=1}^{k_j} \frac{\cal I_X(v_{e_{j,i}}) -  \cal I_X(v)}{\mathrm{length}(e_{j,i})}.
\]
By Lemma~\ref{lemma_definition_inner_rate}, we have $\cal I_X(w) =  \cal I_{\C^2}\big(\widetilde \ell (w)\big)$ for each vertex of $\Gamma_{\tilde{\pi}}$.
On the other hand, by Proposition~\ref{proposition:degree_formula} for every $j=1,\ldots,r$ and $i =1,\ldots,k_j$ we have 
\[
\deg(e_{j,i}) \mathrm{length} (e_{j,i}) =    \mathrm{length}([\nu,\tilde{\ell}(v_{e_{j,i}})]).
\]
Moreover, since $\cal I_{\C^2}$ is linear on the edge $[\nu, \nu_j]$ and $\tilde{\ell}(v_{e_{j,i}}) \in [\nu, \nu_j]$, we also have 
\[
\frac
{\cal I_{\C^2}\big( \tilde{\ell}(v_{e_{j,i}})\big) -  \cal I_{\C^2}(\nu)}
{ \mathrm{length}([\nu,\tilde{\ell}(v_{e_{j,i}})])  } 
=
\frac
{\cal I_{\C^2}(\nu_j) - \cal I_{\C^2}(\nu)}
{ \mathrm{length}([\nu,\nu_j])}\,.
\]
By putting this all together, we obtain 
\[
\Delta_{\Gamma_{\tilde\pi}} \big(\cal I_X\big)(v)
 = 
\sum_{j=1}^r  \Big(     \sum\nolimits_{i=1}^{k_j}   \deg(e_{j,i})    \Big)  
\frac{\cal I_{\C^2}(\nu_j) -  \cal I_{\C^2}(\nu)}
{\mathrm{length}([\nu,\nu_j ])}\,.
\]
Finally, by Lemma~\ref{lem: degree formula 2}, for every $j \in 1,\ldots,r$ we have $\sum_{i=1}^{k_j}    \deg(e_{j,i}) = \deg(v)$.  
We deduce that 
\[
\Delta_{\Gamma_{\tilde\pi}} \big(\cal I_X\big)(v)
 =
\deg(v)  \Delta_{\Gamma_{\sigma_\pi}}\big(\cal I_{\C^2}\big)(\nu),
\]
which is what we wanted to prove.
\end{proof}

\subsection{Lifting $K$ under $\widetilde\ell$.} 
The last ingredient we need for the proof of Theorem~\ref{theorem_main} is a tool to compute also the canonical divisor of a dual resolution graph of our singular surface via a generic projection.

\begin{proposition}\label{proposition_lifting_K} 
Let  $\pi \colon X_{\pi} \to X$ be a good resolution of $(X,0)$ which factors through the blowup of the maximal ideal and through the Nash transform of $(X,0)$ and let $\ell\colon (X,0)\to(\C^2,0)$ be a generic projection with respect to $\pi$.
Let $\tilde\pi\colon X_{\tilde\pi}\to X$ be the good resolution of $(X,0)$ adapted to $\pi$ and $\ell$, and let $v$ be a vertex of $\Gamma_{\pi}$. 
Then
\[
K_{\Gamma_{\tilde\pi}}(v) - m_vp_v    = \deg(v)  K_{\Gamma_{\sigma_{\pi}}}\big(\widetilde{\ell}(v)\big).
\] 
\end{proposition}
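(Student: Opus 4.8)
The plan is to compare, vertex by vertex, the canonical divisor of $\Gamma_{\tilde\pi}$ with the pullback of the canonical divisor of $\Gamma_{\sigma_\pi}$ along the branched cover $\ell_\pi \colon X_{\tilde\pi} \to Y_{\sigma_\pi}$, using the Riemann--Hurwitz formula for the restriction of $\ell_\pi$ to the curves $E_v$ and $C_{\widetilde\ell(v)}$. Recall that $K_{\Gamma_{\tilde\pi}}(v) = m_v\big(\mathrm{val}_{\Gamma_{\tilde\pi}}(v) + 2g(v) - 2\big) = -m_v\chi(\check E_v)$, where $\check E_v$ is $E_v$ with its double points in $\tilde\pi^{-1}(0)$ removed; similarly $K_{\Gamma_{\sigma_\pi}}(\widetilde\ell(v)) = -m_{\widetilde\ell(v)}\chi(\check C_\nu)$ for $\nu = \widetilde\ell(v)$. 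First I would express $m_v$ in terms of $m_\nu$ and the local degree: since $\ell_\pi$ maps $E_v$ onto $C_\nu$ and $z_1\circ\pi$ has order $m_v$ along $E_v$ while $z_1\circ\sigma_\pi$ has order $m_\nu$ along $C_\nu$, a local computation at a smooth point of $E_v$ (as in the proof of Lemma~\ref{lemma_definition_inner_rate}) shows that the degree $d_v$ of the map $E_v \to C_\nu$ induced by $\ell_\pi$ satisfies $m_v \cdot d_v = m_\nu \cdot \deg(v)$ — more precisely, one relates $d_v$ to $\deg(v)$ by noting that $\deg(v)$ is the degree of the plumbing-level cover $\ell_v$, which near $E_v$ is a fibered cover whose base-degree is $d_v$ and whose fiber-degree accounts for the ramification of $z_1$.

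The core of the argument is then Riemann--Hurwitz applied to the degree-$d_v$ map $f\colon \check E_v \to \check C_\nu$ obtained by restricting $\ell_\pi$ and removing the (finitely many) double points on both sides. This gives $\chi(\check E_v) = d_v\,\chi(\check C_\nu) - R$, where $R$ is the total ramification contribution over the interior smooth points of $\check C_\nu$. The key geometric input is that this ramification is controlled precisely by the polar curve: since $\ell$ is generic with respect to $\pi$ and $\pi$ factors through the Nash transform, the branch locus of $\ell_\pi$ consists of the components $C_\mu$ of $\sigma_\pi^{-1}(0)$ together with the strict transform of the discriminant $\Delta$, which meets $\sigma_\pi^{-1}(0)$ at smooth points along the $\cal P$-node components; the extra ramification of $f$ over the interior of $\check C_\nu$ therefore comes exactly from the $p_v$ points where the strict transform of the polar curve meets $E_v$, each contributing its ramification index. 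Assembling this, $R$ should work out (after multiplying through by the appropriate multiplicities and using the relation between $m_v$, $m_\nu$, $d_v$ and $\deg(v)$) to produce exactly the correction term $m_v p_v$, yielding
\[
-m_v\chi(\check E_v) = \deg(v)\big(-m_\nu\chi(\check C_\nu)\big) + m_v p_v,
\]
which rearranges to the claimed formula. One must be slightly careful to treat separately the case where $v$ is not a $\cal P$-node (then $p_v = 0$, the restricted cover is unramified over the interior of $\check C_\nu$, and the statement reduces to pure Riemann--Hurwitz with the degree bookkeeping), and the case where $v$ is a $\cal P$-node.

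The main obstacle I expect is the precise bookkeeping of multiplicities and degrees: disentangling the "base degree" $d_v$ of $E_v \to C_\nu$ from the local topological degree $\deg(v)$ of the plumbing neighborhoods, and checking that the ramification index of $f$ at a point where a polar curve branch hits $E_v$ contributes exactly $m_v$ (not $d_v$, not $\deg(v)$) to the correction, so that the branch points assemble to $m_v p_v$ rather than to some other combination. This is where the genericity of $\ell$ with respect to $\pi$ (each polar branch is a curvette meeting $E_v$ transversally at a smooth point) and the quasi-ordinary nature of the singularities resolved in passing from the naive pullback to $\tilde\pi$ must be used carefully; I would verify it by a local coordinate computation at such a point, analogous to the Jacobian computation in the proof of Lemma~\ref{lem:inner rate }.
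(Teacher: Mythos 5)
Your plan takes a genuinely different route from the paper. The paper applies the Hurwitz formula not to the map of curves $\ell_\pi|_{E_v}\colon E_v\to C_\nu$, but to the restriction $\ell_v|_{F_v}\colon F_v\to F'_\nu$ of $\ell$ to the pieces of the Milnor fibers of a generic linear form sitting over $E_v$ and $C_\nu$. Working on $F_v$ makes the bookkeeping immediate: the degree of $F_v\to F'_\nu$ equals $\deg(v)$ by the very definition of $\deg(v)$, the branch locus in $F_v$ consists visibly of the $p_v m_v$ points where $F_v$ meets $\Pi^*$ (each a simple branch point by genericity), and the further regular $m_v$-fold cover $F_v\to E_v\cap\check{\cal N}(E_v)$ coming from the disc-bundle structure then gives the relation to $K_{\Gamma_{\tilde\pi}}(v)$, with a separate argument for the $l_v$ extra punctures when $v$ is an $\cal L$-node, using the identity $\deg(v)=m_v l_v$. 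Your approach, pushing Riemann--Hurwitz down to $\check E_v\to\check C_\nu$, would instead need you to prove that this map of curves is proper of degree $d_v$, that $m_v d_v = m_\nu\deg(v)$ (a single local computation does not give this global degree relation; you need for instance the commutativity of the square formed by the disc-bundle projections and the two covers), that each of the $p_v$ polar points is a \emph{simple} branch point of the map of curves, and that there is no other ramification --- in particular none over the point where the strict transform of a generic line meets $C_\nu$, which is exactly what the paper's $\cal L$-node case addresses. It could be made to work, and would then dispense with the separate $\cal L$-node treatment, but those verifications re-derive in a different guise most of what the paper gets for free from the $F_v$-level picture.

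There is also one concrete mistake in your last paragraph: you say each polar branch point should contribute ``exactly $m_v$ (not $d_v$, not $\deg(v)$)'' to the correction, so that $p_v$ of them assemble to $m_v p_v$. That is not how the accounting goes at the curve level. Each polar point is a simple ramification point of the degree-$d_v$ map $\check E_v\to\check C_\nu$, contributing $1$ to Riemann--Hurwitz, so one gets $\chi(\check E_v)=d_v\chi(\check C_\nu)-p_v$; the factor $m_v$ enters only afterwards, when you multiply through by $m_v$ and use $m_v d_v=m_\nu\deg(v)$. Each polar branch does contribute $m_v$ in the paper's proof, but only because the paper works on the $m_v$-fold cover $F_v$ of the curve piece, where each polar branch lifts to $m_v$ simple branch points. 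Conflating the $E_v$-level and $F_v$-level Euler-characteristic counts at exactly this step would make the computation come out wrong, and is the main gap to close before your argument is sound.
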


\begin{proof}
We use again the notations of Sections~\ref{subsection_monodromy_lengths} and \ref{subsec:degree}: we write $E_v$ for the divisor on $X_{\tilde\pi}$ associated with $v$, and $C_\nu$ for the divisor on $Y_{\sigma_\pi}$ associated with $\nu=\widetilde{\ell}(v)$, and again we consider the cover $\ell_v \colon  \tilde{\pi}\big( \cal N(E_v)\big) \to \cal \sigma_{\pi}\big(N(C_{\nu})\big)$ introduced in Section~\ref{subsec:degree}.
We choose again a linear form $z_1 \colon (\C^2,0) \to (\C,0)$ on $(\C^2,0)$ such that $h=\ell \circ z_1 \colon (X,0) \to (\C,0)$ is a generic linear form on $(X,0)$, as in the proof of the degree formula (Proposition~\ref{proposition:degree_formula}).  
For $t>0$ small enough, let   $F_t = h^{-1}(t) \cap B_{\epsilon}$ be the Milnor--L\^e fiber of $L$ and let $F'_t = \{z_1=t\}\cap B'_{\epsilon}$ be that of $h$, so that we have $\ell (F_t) = F'_t$. 
Set $F_v = F_t \cap  \tilde{\pi}\big(\cal N(E_v)\big)$ and $F'_{\nu} = F'_t \cap  \sigma_{\pi}\big(\cal N(E_{\nu})\big)$. 
Then $\ell_v$  restricts to a map $\ell_v|_{F_v}  \colon F_v \to F'_{\nu}$. 
 
Let $l_v$ (respectively $p_v$) be the number of irreducible components of $h=0$ (resp. of the polar curve of $\ell$) whose strict transforms by $\pi$ intersects $E_v$.
In particular, $l_v>0$ (resp. $p_v>0$) if and only if $v$ is an $\cal L$-node (resp. a $\cal P$-node). 
 
The cover $\ell_v \colon F_v \to F'_{\nu}$ is branched on $p_v m_v$ points of $F_v$. 
Since $\ell$ is generic, the images of these points are $p_v m_v$ distinct points of $F'_{\nu}$ and if $p\in F'_{\nu}$ is one of them,  $\ell_v^{-1}(p)$ consists of $\deg(v)-1$ points. 
Therefore, applying Hurwitz formula we obtain $\chi(F_v) - \big(\deg(v)-1\big)m_{v} p_v =\deg(v)\big( \chi(F'_{\nu}) -m_{v} p_v\big)$, that is
\begin{equation}\label{eq_chi_in_proof}
	\chi(F_v) + m_v  p_v = \deg(v) \chi(F'_{\nu}).
\end{equation}
Let us identify $F_v$ with its pull-back by $\tilde\pi$ and let us consider again the generic linear form $h \colon (X,0) \to (\C^2,0)$ on $(X,0)$. 
If $p$ is a smooth point of $E_v$ which does not intersect the strict transform $h^*$ of $h$ on $X_{\tilde\pi}$, the total transform of $h$ is   of the form $(h \circ \tilde{\pi})(u,v) = u^{m_v}$ in suitable local coordinates $(u,v)$ centered at $p$.  
Therefore, the  map $ \cal N(E_v) \to E_v \cap \cal N(E_v)$ which sends each fiber-disc of $\cal N(E_v)$ to its intersecting point with $E_v$ restricts to a regular cover $\rho_v \colon F_v \to E_v  \cap \cal N(E_v)$ of degree $m_v$. 
Then, applying Hurwitz formula again, we  obtain $\chi(F_v)=m_v \chi\big(E_v  \cap \cal N(E_v)\big)$. 
By the same argument, we also have  $\chi(F'_\nu) = m_{\nu}  \chi\big(C_{\nu} \cap \cal N(C_{\nu})\big)$. 

Assume first that $v$ is not an $\cal L$-node, that is $l_v=0$. 
Then $\chi\big(E_v  \cap \cal N(E_v)\big) = 2-2g(v)-{\val_{\Gamma_{\tilde{\pi}}}}(v)$. 
This implies that $\chi(F_v) = - K_{\Gamma_{\pi}}(v)$.
By the same argument, we have $\chi(F'_{\nu}) = - K_{\Gamma_{\sigma_{\pi}}}(\nu)$.
Using equation \eqref{eq_chi_in_proof}, we deduce that ${K_{\Gamma_{\tilde{\pi}}}}(v)  - m_vp_v    = \deg(v)  K_{\Gamma_{\sigma_{\pi}}}(\nu)$, which is what we wanted.

Assume  now that $v$ is  an $\cal L$-node. 
Then $\chi\big(E_v  \cap \cal N(E_v)\big) = 2-2g(v)- {\val_{\Gamma_{\tilde{\pi}}}}(v)-l_v $, which   implies $\chi(F_v) = - K_{\Gamma_{\pi}}(v)  -m_v l_v$, and  since $\nu$ is the root vertex of $\Gamma_{\sigma_{\pi}}$, we have $m_{\nu}=1$, and we get   $\chi(F'_\nu) =   \chi(C_{\nu})-1 =- K_{\Gamma_{\sigma_{\pi}}}(\nu)-1$. 
Using equation \eqref{eq_chi_in_proof}, we  then  obtain ${K_{\Gamma_{\tilde{\pi}}}}(v)  - m_vp_v +  m_vl_v    = \deg(v) \big( K_{\Gamma_{\sigma_{\pi}}}(\nu) +1\big)$, that is
\begin{equation}\label{eq_chi_in_proof2}
{K_{\Gamma_{\tilde{\pi}}}}(v)  - m_vp_v    = \deg(v)  K_{\Gamma_{\sigma_{\pi}}}(\nu)    + \big(\deg(v) -   m_vl_v\big).
\end{equation}  
Since $v$ is an $\cal L$-node, $\nu  =\widetilde{\ell} (v)$ is the root vertex of the tree $\Gamma_{\sigma_{\pi}}$.  
Let $(\gamma,0) \subset (\C^2,0)$ be a generic  complex line. 
Its  strict transform $\gamma^*$ by $\sigma_{\pi}$ is  a curvette of $E_{\nu}$ and since $m_{\nu}=1$, the intersection $\gamma \cap F'_v$ consists of a single point $p$. 
The cardinal of $\ell_v^{-1}(p)$ is equal to the degree $\deg(v)$ of the cover $\ell_v$  and also to the number of points in the intersection $\ell^{-1}(\gamma) \cap F_v$, which is exactly $m_v l_v$. 
This proves that $\deg(v) - m_vl_v=0$. 
Replacing this in equation \eqref{eq_chi_in_proof2}, we obtain ${K_{\Gamma_{\tilde{\pi}}}}(v)  - m_vp_v    = \deg(v)  K_{\Gamma_{\sigma_{\pi}}}(\nu)$, which is what we wanted to prove. 
\end{proof}


\subsection{Proof of Theorem \ref{theorem_main}} \label{subsec:main proof}
We are finally ready to put together all the pieces we prepared so far and finish the proof of our main theorem.

Thanks to Lemma \ref{lemma_reduction_bigger_modification}, it suffices to prove the theorem for a good resolution $\pi$ of $(X,0)$ that factors through the blowup of the maximal ideal and through the Nash transform of $(X,0)$.   
Again by the same lemma, it is enough to prove it for the resolution $\tilde\pi$ of $(X,0)$ adapted to $\pi$ and to a given projection $\ell$ generic with respect to $\pi$.
As the formula can be easily verified at all vertices of $\Gamma_{\tilde\pi}$ that are not vertices of $\Gamma_\pi$ thanks to Lemma~\ref{lem:inner rate } by the same argument as the one in the proof of Proposition~\ref{proposition_laplacian_smooth}, we just have to prove that the formula holds for $\Delta_{\Gamma_{\tilde\pi}}(\calI_X|_{\Gamma_{\tilde\pi}})$ at a vertex $v$ of $\Gamma_\pi$, that is we want to prove that
\[
\Delta_{\Gamma_{\tilde\pi}}(\cal I_X)(v) - K_{\Gamma_{\tilde\pi}}(v)  - 2 m_v l_v + m_v p_v  =0.
\]
Set $\nu = \widetilde{\ell}(v)$.
As we have already proved the theorem for the modification $\sigma_\pi$ of $(\C^2,0)$ in Proposition~\ref{proposition_laplacian_smooth}, we have $ \Delta_{\Gamma_{\sigma_\pi}}(\cal I_{\C^2})(\nu)  - K_{\Gamma_{\sigma_{\pi}}}(\nu)  - 2=0 $ if $\nu=\ord_0$ is the root of $\NL(\C^2,0)$, while $ \Delta_{\Gamma_{\sigma_\pi}}(\cal I_{\C^2})(\nu)  - K_{\Gamma_{\sigma_{\pi}}}(\nu) = 0$ otherwise.
By Proposition \ref{proposition_lifting_Delta}, we have  $\Delta_{\Gamma_{\tilde\pi}}(\cal I_X)(v)  = \deg(v) \Delta_{\Gamma_{\sigma_\pi}}(\cal I_{\C^2})(\nu)$, while by Proposition \ref{proposition_lifting_K} we have $K_{\Gamma_{\tilde\pi}}(v)  - m_vp_v    = \deg(v)  K_{\Gamma_{\sigma_{\pi}}}(\nu)$. 
Therefore, we have
\[
\Delta_{\Gamma_{\tilde\pi}}(\cal I_X)(v) - K_{\Gamma_{\tilde\pi}}(v)  - 2 m_v l_v + m_v p_v   = \deg(v)\big(\Delta_{\Gamma_{\sigma_\pi}}(\cal I_{\C^2})(\nu)  - K_{\Gamma_{\sigma_{\pi}}}(\nu)\big) -2m_vl_v.
\] 
If $v$ is not an $\cal L$-node, then $l_v = 0$ and we get 
\[
\Delta_{\Gamma_{\tilde\pi}}(\cal I_X)(v) - K_{\Gamma_{\tilde\pi}}(v)  - 2 m_v l_v + m_v p_v   =  \deg(v)\big(\Delta_{\Gamma_{\sigma_\pi}}(\cal I_{\C^2})(\nu)  - K_{\Gamma_{\sigma_{\pi}}}(\nu)\big)  = 0.
\]
If $v$ is an $\cal L$-node, then $\nu=\ord_0$ and, since as shows in the course of the proof of Proposition~\ref{proposition_lifting_K} we have $\deg(v)=m_vl_v$, we obtain:
\[
\Delta_{\Gamma_{\tilde\pi}}(\cal I_X)(v) - K_{\Gamma_{\tilde\pi}}(v) - 2 m_v l_v + m_v p_v   =  \deg(v)\big(\Delta_{\Gamma_{\sigma_\pi}}(\cal I_{\C^2})(\nu)  - K_{\Gamma_{\sigma_{\pi}}}(\nu) -2\big) = 0.
\]
This completes the proof of Theorem~\ref{theorem_main}.\hfill\qed


\section{Applications}
\label{section:applications}

In this section we will derive several consequences of Theorem~\ref{theorem_main}.

\subsection{L\^e--Greuel--Teissier formula} \label{subsection:le-greuel}
 
As a first application of Theorem~\ref{theorem_main}, we recover the L\^e--Teissier formula of \cite{LeTeissier1981} in the particular case of a generic linear form $h \colon (X,0) \to (\C,0)$. 
It is a singular version of the L\^e--Greuel  formula (see \cite{Greuel1975, Le1974} for the classical statement and \cite[Theorem 4.2(A)]{Massey1996} for a more general version).

\begin{proposition}
	\label{proposition:Le-Greuel}
	Let $(X,0)$ be a surface germ with isolated singularity and let  $h \colon (X,0) \to (\C,0)$ be a generic linear form. 
	Consider the Milnor--L\^e fiber $F_t = \{h=t\} \cap B_{\epsilon}$ of $h$, for given real numbers $\epsilon>0$ and $\eta >0$ such that $\eta << \epsilon < 1$. 
	Then we have 
	\[
	m(\Pi,0) = m(X,0)-\chi(F_t),
	\]
	where $m(X,0)$ denotes the multiplicity of $(X,0)$ and $m(\Pi,0)$ denotes the multiplicity of the polar curve $\Pi$ of a generic linear projection $\ell \colon (X,0) \to (\C^2,0)$. 
\end{proposition}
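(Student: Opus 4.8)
The plan is to obtain the formula as a numerical consequence of our main result, Theorem~\ref{theorem_main}, using the fact recalled in Section~\ref{subsection_preliminaries_laplacians} that the Laplacian of a piecewise linear function on a metric graph is a divisor of degree zero, and that the pushforward of divisors preserves degree. First I would fix a good resolution $\pi\colon X_\pi\to X$ of $(X,0)$ that factors through the blowup of the maximal ideal and through the Nash transform, and a projection $\ell\colon(X,0)\to(\C^2,0)$ that is generic with respect to $\pi$, with $\Pi$ its polar curve. By Theorem~\ref{theorem_main} we have $\Delta_{\Gamma_\pi}(\calI_X)=K_{\Gamma_\pi}+2L-(r_\pi)_*P$ in $\Div(\Gamma_\pi)$; taking degrees and using that the left-hand side has degree zero yields the numerical identity $\deg(K_{\Gamma_\pi})+2\deg(L)-\deg(P)=0$. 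It then suffices to identify each of the three degrees with a geometric invariant.

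For $\deg(L)=\sum_vm_vl_v$ I would use that $m(X,0)$ equals the intersection number at $0$ of two generic hyperplane sections $\gamma$ and $\gamma'$ of $(X,0)$. Writing the total transform $\pi^*\gamma=\gamma^*+\sum_vm_vE_v$ of a generic element of the maximal ideal and computing $(\gamma,\gamma')_0$ on $X_\pi$ by the projection formula, and noting that $\gamma^*$ and ${\gamma'}^*$ are disjoint unions of curvettes of the $\cal L$-nodes meeting them transversally at pairwise distinct points, one finds $(\gamma,\gamma')_0=\sum_vm_vl_v$, so that $\deg(L)=m(X,0)$. The identical computation with $\Pi$ in place of $\gamma'$, now using that the strict transform of $\Pi$ on $X_\pi$ is a union of curvettes of the $\cal P$-nodes because $\pi$ factors through the Nash transform and $\ell$ is generic with respect to $\pi$, gives $\deg(P)=\sum_vm_vp_v=(\Pi,\gamma)_0=m(\Pi,0)$.

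For the last term I would compute $\deg(K_{\Gamma_\pi})=-\sum_vm_v\chi(\check E_v)$ via the Euler characteristic of the Milnor--L\^e fibre $F_t$ of $h$ seen from the resolution $\pi$, an argument essentially contained in the proof of Proposition~\ref{proposition_lifting_K}. Identifying $F_t$ with its preimage by $\pi$ and using the decomposition of $F_t$ over $\Gamma_\pi$ from Section~\ref{subsection_monodromy_lengths}, the pieces of $F_t$ lying over the edges of $\Gamma_\pi$ and over the strict transforms of the components of $h^{-1}(0)$ are disjoint unions of annuli, hence have vanishing Euler characteristic, and since they meet the remaining pieces along circles one gets $\chi(F_t)=\sum_v\chi(F_{v,t})$. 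As $F_{v,t}$ is a degree $m_v$ unramified cover of $E_v\cap\check{\cal N}(E_v)=\check E_v\setminus\{l_v\text{ points}\}$, we obtain $\chi(F_{v,t})=m_v\bigl(\chi(\check E_v)-l_v\bigr)$, whence
\[
\chi(F_t)=\sum_vm_v\chi(\check E_v)-\sum_vm_vl_v=-\deg(K_{\Gamma_\pi})-m(X,0).
\]
Substituting $\deg(K_{\Gamma_\pi})=-\chi(F_t)-m(X,0)$, $\deg(L)=m(X,0)$ and $\deg(P)=m(\Pi,0)$ into the degree-zero identity gives exactly $m(\Pi,0)=m(X,0)-\chi(F_t)$. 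The computations of $\deg(L)$ and $\deg(P)$ are routine intersection theory on the resolution; the only point requiring care is the Euler characteristic computation, that is the decomposition of the Milnor fibre over $\Gamma_\pi$ and the covering structure of its pieces, which is however a classical A'Campo--L\^e type argument already worked out in the proof of Proposition~\ref{proposition_lifting_K}.
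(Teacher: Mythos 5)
Your proposal is correct and takes essentially the same approach as the paper: both deduce the formula from the degree-zero property of the Laplacian applied to Theorem~\ref{theorem_main}, identify $\sum_v m_v l_v$ and $\sum_v m_v p_v$ with $m(X,0)$ and $m(\Pi,0)$, and obtain the $\chi(F_t)$ term from the decomposition of the Milnor fibre into the pieces $F_v$ together with annuli. The only cosmetic difference is that you phrase the three identifications as computations of $\deg(K_{\Gamma_\pi})$, $\deg(L)$, $\deg(P)$ and spell out the intersection-theoretic argument for $\deg(L)$ and $\deg(P)$, which the paper simply asserts.
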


\begin{proof} 
	Let $\pi \colon X_{\pi} \to X$ be a good resolution of $(X,0)$ that factors through the blowup of the maximal ideal and through the Nash transform of $(X,0)$ and let $\Gamma_{\pi}$ be the dual graph of $\pi$. 
	As already observed in Subsection \ref{subsection_preliminaries_laplacians}, the degree of the Laplacian of a piecewise linear function on a metric graph is equal to zero. 
	Applying this on the inner rate function $\cal I_X$, we get
	\[
	\sum_{v \in V(\Gamma_{\pi})} \Delta_{\Gamma_\pi}\big(\calI_X\big)(v) = 0.
	\]
	If we replace the coefficients of this Laplacian by their expression given by Theorem~\ref{thm:B} (which as observed in Remark~\ref{remark:theorem_main_implies_B} is an immediate consequence of Theorem~\ref{theorem_main}), we obtain
	\[
	\sum_{v \in V(\Gamma_{\pi})} m_vl_v-  \sum_{v \in V(\Gamma_{\pi})}   m_v p_v-   \sum_{v \in V(\Gamma_{\pi})}  m_v \big( \chi(\check E_{v}) - l_v\big)  = 0.
	\]
	The first sum equals $m(X,0)$, while the second one equals $m(\Pi,0)$.
	Moreover, using again the notations of the proof of Proposition \ref{proposition_lifting_K}, for all $v \in V(\Gamma_{\pi})$ we have 
	\[
	\chi(F_v) = m_v \chi\big(E_v  \cap \cal N(E_v)\big) = m_v \big( \chi(\check E_{v}) - l_v\big),
	\]
	as $E_v  \cap \cal N(E_v)$ is obtained from $\check E_v$ by removing $l_v$ small discs, one for each point of $E_v$ through which a component of the pullback of a generic linear form passes.
	Since the connected components of $F_t \setminus \coprod_{v \in V(\Gamma_{\pi})}F_v$ are annuli and have therefore trivial Euler characteristic, the additivity of $\chi$ gives
	\[
	\chi (F_t) = \sum_{v \in V(\Gamma_{\pi})}  \chi(F_v) = \sum_{v \in V(\Gamma_{\pi})}  m_v \big( \chi(\check E_{v}) - l_v\big),
	\]
	which completes the proof.
\end{proof}

\subsection{A numeric version of Theorem \ref{theorem_main}} \label{prop:alternative}

We will now prove an alternative, more numeric version of  Theorem~\ref{theorem_main} which will be later used to show our Theorem~\ref{thm:A} from the introduction. 

First, we need to introduce some notation.   
Let $(X,0)$ be a complex surface germ with isolated singularities and let $\pi \colon X_{\pi} \to X$ be a good resolution of $(X,0)$. 
Let $h: (X,0) \to (\C,0)$ be a generic linear form on $(X,0)$ and denote by $h^*$ the strict transform via $\pi$ of the curve defined by $h=0$ in $X$.
For every vertex $v$ of $\Gamma_\pi$ denote by $l_\pi(v)$ the intersection multiplicity $h^*\cdot E_v$ of $h^*$ with the exceptional component $E_v$ of $\pi^{-1}(0)$ associated with $v$.
Observe that whenever $\pi$ factors through the blowup of the maximal ideal of $(X,0)$, then $l_\pi(v)$ coincides with the integer $l_v$ defined in the introduction. 
Similarly, let $\ell \colon (X,0) \to (\C^2,0)$ be a generic plane projection of $(X,0)$ with respect to $\pi$, and denote by $\pi_\pi(v)$ the intersection multiplicity $\Pi^*\cdot E_v$ of the strict transform $\Pi^*$ via $\pi$ of the polar curve $\Pi$ associated with $\ell$ with the exceptional curve $E_v$.
Whenever $\pi$ factors through the Nash transform of $(X,0)$, then $p_{\pi}(v) =p_{v}$, where, as in the introduction, $p_{v}$ denotes the number of connected components of $\Pi^*$ which meet $E_{v}$.

\begin{remark}  \label{rk:p2} 
Let  $h \colon (X,0) \to (\C,0)$ be a generic linear form and let $(\delta,0)$ be an irreducible curve germ on $(X,0)$ which is not a component of $h^{-1}(0)$ and whose strict transform $\delta^*$ by $\pi$ meets an exceptional component $E_v$ at a point $p$.
Then, by writing $h \circ \pi$ in local coordinates centered at $p$, we easily see that the intersection multiplicity $(E_v, \delta^*)_p$ of the curve germs $(E_v,p)$ and $(\delta^*,p)$ at $p$ equals $m_v^{-1}\mult_0(\delta)$, where $\mult_0(\delta)$ denotes the multiplicity of $\delta$ at $0$. 
Therefore, the integer $p_\pi(v)$ can also be computed as 
 	\[
 	p_{{\pi}}(v)
 	 = h^*\cdot E_v
 	 = \sum_{\delta \subset \Pi_{v}} \delta^*\cdot E_v
 	 = \sum_{\delta \subset \Pi_{v},\, p \in E_v} (E_v, \delta^*)_p,
 	\]
where the sums are indexed over the set $H_v$ of components of $h$ whose strict transform via $\pi$ meets $E_v$.
The analogous statement holds for $\Pi$ and the integers $p_\pi(v)$.
\end{remark}

To keep the notation short and consistent with the statements given in the introduction, we denote by $q_v=\calI_X(v)$ the inner rate of $v$.
We can now state the main result of the subsection.

\begin{proposition} \label{proposition:application_linear_algebra} 
	Let $(X,0)$ be the germ of a complex surface with isolated singularities, let $\pi \colon X_{\pi} \to X$ be a good resolution of $(X,0)$ and let $\Pi$ be the polar curve of a projection $\ell \colon (X,0) \to (\C^2,0)$ that is generic with respect to $\pi$.
	Let $v_1, \ldots, v_n$ be the vertices of the dual graph $\Gamma_{\pi}$ and denote by $E_{v_1},\ldots,E_{v_n}$ the corresponding irreducible components of $\pi^{-1}(0)$. 
	Consider the vectors $\underline{a} = {}^t(m_{v_1} q_{v_1}, \ldots, m_{v_n} q_{v_n})$, $\underline{L} = {}^t\big(l_{\pi}(v_1), \ldots, l_{\pi}(v_n)\big)$, $\underline{P} = {}^t\big(p_{\pi}(v_1) , \ldots, p_{\pi}(v_n)\big)$, and $\underline{K}=  {}^t\big(  k_\pi(v_1), \ldots, k_\pi(v_n)\big)$, where $k_\pi(v_i) = \val_{\Gamma_\pi}(v_i)+2g(v_i)-2=m_{v_i}^{-1}K_{\Gamma_{\pi}}(v_i)$, and denote by ${M=(E_{v_i}\cdot E_{v_j})_{1\leq i,j\leq n}}$ the intersection matrix of the exceptional divisor $\pi^{-1}(0)$ of $\pi$.	
	Then we have
	\[
	M \underline{a} = \underline{K}  +  \underline{L}    -  \underline{P}.
	\]	
\end{proposition}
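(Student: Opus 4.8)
The plan is to identify the $\Q$-divisor $\sum_i m_{v_i}q_{v_i}E_{v_i}$ on $X_\pi$, up to corrections supported on the exceptional divisor and on the strict transforms of a generic hyperplane section and of the polar curve, with the divisor of a meromorphic $2$-form attached to the generic projection $\ell$, and then to obtain each coordinate of the asserted vector identity by intersecting with $E_{v_i}$ and applying the adjunction formula on the smooth surface $X_\pi$.

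First I would fix an embedding $(X,0)\subset(\C^n,0)$ and coordinates $(z_1,\dots,z_n)$ on $\C^n$ so that $\ell=(z_1,z_2)$ is the chosen generic projection and $h=z_1|_X$ is a generic linear form on $(X,0)$, and consider the holomorphic $2$-form $\omega:=d(z_1\circ\pi)\wedge d(z_2\circ\pi)$ on $X_\pi$, which is a nonzero rational section of $\Omega^2_{X_\pi}=\O_{X_\pi}(K_{X_\pi})$ since $\ell\circ\pi$ is generically \'etale. The divisor $\operatorname{div}(\omega)$ is the zero divisor of the Jacobian of $\ell\circ\pi$, hence is supported on the critical locus of $\ell\circ\pi$; away from the exceptional divisor this critical locus is exactly the strict transform $\Pi^*$ of the polar curve, along which $\omega$ vanishes to order $1$ by the genericity of $\ell$, while the order of $\omega$ along a generic (smooth, not on $\Pi^*$) point of $E_{v_i}$ equals $m_{v_i}(1+q_{v_i})-1$ by the local normal forms computed in the proof of Lemma~\ref{lemma_definition_inner_rate}, the remark following it, and the proof of Lemma~\ref{lem:inner rate}. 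Therefore
\[
\operatorname{div}(\omega)=\sum_{i=1}^n\big(m_{v_i}(1+q_{v_i})-1\big)E_{v_i}+\Pi^*
=\sum_{i=1}^n m_{v_i}q_{v_i}E_{v_i}+\pi^*\{h=0\}-h^*-\sum_{i=1}^n E_{v_i}+\Pi^*,
\]
where I have used the identity $\pi^*\{h=0\}=\sum_i m_{v_i}E_{v_i}+h^*$.

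Next I would intersect both sides of this equality with $E_{v_i}$. On the left, since $\operatorname{div}(\omega)$ is linearly equivalent to $K_{X_\pi}$, the adjunction formula on the smooth surface $X_\pi$ gives $\operatorname{div}(\omega)\cdot E_{v_i}=K_{X_\pi}\cdot E_{v_i}=2g(v_i)-2-E_{v_i}^2$. On the right I would use that $\pi^*\{h=0\}$ is a principal divisor, so $\pi^*\{h=0\}\cdot E_{v_i}=0$; that $h^*\cdot E_{v_i}=l_\pi(v_i)$ and $\Pi^*\cdot E_{v_i}=p_\pi(v_i)$ by the definitions recalled just before the statement; that $\big(\sum_j E_{v_j}\big)\cdot E_{v_i}=E_{v_i}^2+\val_{\Gamma_\pi}(v_i)$; and that $\big(\sum_j m_{v_j}q_{v_j}E_{v_j}\big)\cdot E_{v_i}$ is precisely the $i$-th entry of $M\underline a$. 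Equating the two expressions and cancelling the common summand $E_{v_i}^2$ yields
\[
(M\underline a)_i=\val_{\Gamma_\pi}(v_i)+2g(v_i)-2+l_\pi(v_i)-p_\pi(v_i)=k_\pi(v_i)+l_\pi(v_i)-p_\pi(v_i),
\]
which is the $i$-th coordinate of $\underline K+\underline L-\underline P$; as $i$ is arbitrary, this proves the proposition.

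The step I expect to be the main obstacle is the precise determination of $\operatorname{div}(\omega)$, i.e.\ that the exceptional multiplicities are exactly $m_{v_i}(1+q_{v_i})-1$ and that $\Pi^*$ occurs with multiplicity exactly $1$. The first point is essentially already contained in the local coordinate computations carried out in the proofs of Lemma~\ref{lemma_definition_inner_rate} and Lemma~\ref{lem:inner rate}, performed at a generic smooth point of $E_{v_i}$ not lying on the strict transform of the polar curve; the second is a standard feature of generic projections, namely that the polar curve is reduced and that $\ell$ restricts to a generically injective map onto the discriminant curve, so that $\ell$ is a simple fold along $\Pi$. Once these facts are granted, everything else is routine bookkeeping with intersection numbers on $X_\pi$. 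When $\pi$ moreover factors through the blowup of the maximal ideal and the Nash transform, the resulting identity is equivalent to Theorem~\ref{theorem_main}, via the relation between the Laplacian of $\calI_X$ on $\Gamma_\pi$ and the intersection form $M$ together with the fact that $q_{v_i}=1$ whenever $l_\pi(v_i)\neq 0$.
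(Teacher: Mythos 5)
Your proof is correct, and it takes a genuinely different route from the paper. The paper obtains this proposition as a corollary of the Laplacian formula (Theorem~\ref{theorem_main}), which is itself proved via the Milnor-fibration and screw-number machinery of Sections~\ref{subsection_monodromy_screw_numbers}--\ref{subsec:degree}; it then removes the factorization hypotheses on $\pi$ by an inductive blowdown argument. You instead give a direct intersection-theoretic proof on $X_\pi$: you identify $\operatorname{div}(\omega)$ for $\omega=d(z_1\circ\pi)\wedge d(z_2\circ\pi)$ using the local normal forms already established in the proof of Lemma~\ref{lemma_definition_inner_rate} (and the remark that follows it, which records exactly the exponent $m_v(1+q_v)-1$), then pair with each $E_{v_i}$ and use adjunction. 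This bypasses Theorem~\ref{theorem_main} entirely, so in fact it yields an independent proof of that theorem as well, and it is more self-contained and arguably more transparent. It also makes precise the birational interpretation of inner rates anticipated in Remark~\ref{remark:discrepancies}: the coefficient $m_vq_v$ is essentially a Mather/Jacobian discrepancy of the generic projection. The two ingredients you flag as the main concern — that $\operatorname{ord}_{E_{v_i}}(\omega)=m_{v_i}(1+q_{v_i})-1$ and that $\Pi^*$ appears with multiplicity one — are indeed the crux, but both are available: the first is exactly the computation in the remark after Lemma~\ref{lemma_definition_inner_rate}, performed at any smooth point of $E_{v_i}$ away from $\Pi^*$ and the other components (so it does not require $\pi$ to factor through the Nash transform), and the second is a standard consequence of the genericity of $\ell$ (the polar curve is reduced and $\ell$ is a simple fold along it). Note also that the definitions of $l_\pi(v)$ and $p_\pi(v)$ as intersection numbers, rather than numbers of branches, are precisely what is needed for your pairing computation to go through even when $\Pi^*$ or $h^*$ meets double points of $\pi^{-1}(0)$. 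What the paper's route buys is that the proposition is a near-immediate formal consequence once Theorem~\ref{theorem_main} is known; what your route buys is independence from the topological machinery and a cleaner conceptual source for the formula.
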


\begin{proof}
Let $h \colon (X,0) \to (\C,0)$ be a generic linear form on $(X,0)$ such that the strict transform $h^*$of $h^{-1}(0)$ by $\pi$ does not intersect the strict transform of the polar curve $\Pi$.
Assume for the time being that $\pi \colon X_{\pi} \to X$ factors through the blowup of the maximal ideal and through the Nash transform of $(X,0)$. 
Since the total transform $(h)$ of $h$ by $\pi$, which is $(h) = \sum_{i=1}^n m_{v_i} E_{v_i} + h^*$, is a principal divisor (see \cite[Theorem 2.6]{Laufer1972}), for every vertex $v$ of $\Gamma_{\pi}$ we have $(h) \cdot E_v = 0$, that is 
\begin{equation}\label{eqn:proof_application}
\sum_{[v,v']} m_{v'}  + m_v E_v^2 + l_{v} = 0,
\end{equation}
where the sum runs over the edges of $\Gamma_\pi$ adjacent to $v$.
Consider the coefficient of the Laplacian at a vertex $v$, as given by the formula of Theorem~\ref{theorem_main}, and divide both sides by $m_v$.
This yields the equality
\[
\sum_{[v,v']} m_{v'} (q_{v'}-q_v) = \val_{\Gamma_\pi}(v) + 2g(E_v) - 2 + 2l_v -p_v,
\]
which combined with equation \eqref{eqn:proof_application} gives
\[
q_v m_v E_v^2 + \sum_{[v,v']} m_{v'}q_{v'} = \val_{\Gamma_\pi}(v) + 2g(E_v) - 2 + (2-q_v)l_v -p_v.
\]
Now, observe that $l_v$ vanishes unless $v$ is an $\cal L$-node of $(X,0)$, in which case $q_v=1$, so that for every $v$ we have $(2-q_v)l_v=l_v$. 
This proves that $M \underline{a} = \underline{K}  +  \underline{L}    -  \underline{P}$.
 
Let us now prove the formula without the assumption that  $\pi \colon X_{\pi} \to X$ factors through the blowup of the maximal ideal and through the Nash transform of $(X,0)$.
Since there exists a good resolution $\hat{\pi}$ of $(X,0)$ that dominates $\pi$ and factors through both, it is sufficient to work by induction and prove that if $\rho \colon X_{\pi'} \to  X_{\pi}$ is the blowup of a point $q \in \pi^{-1}(0)$ and if the proposition holds for  $\pi' = \pi \circ \rho$, then it also holds for $\pi$. 
	
Let $E_v$ be a component of $\pi^{-1}(0)$ passing through $q$ and denote by $E_{v''}$ the exceptional component created by the blowup $\rho$.
Observe that if $C$ is a curve germ on $X_\pi$ at $q$, since $E_{v''}^{2}=-1$ then by basic intersection theory we have
\begin{align*}\label{eq:intersection_number_blowup}
C\cdot E_v & = (C^* + d E_{v''})\cdot (E_v+E_{v''}) = C^* \cdot (E_v+E_{v''}) + d (E_v\cdot E_{v''}+ E_{v''}^{-1}) \\
& = C^* \cdot (E_v+E_{v''}) + d (1-1)= C^* \cdot E_v+ C^* \cdot E_{v''}\,,
\end{align*}
where $C^*$ denotes the strict transform of $C$ via $\rho$ and $d$ is the order of $C$ in $q$.
Note that the leftmost intersection number above is computed in $X_\pi$, while all the others are computed in $X_{\pi'}$.
In particular, we have
\begin{equation}\label{eq:intersection_number_blowup}
l_{{\pi}}(v) = l_{{\pi'}}(v) +l_{{\pi'}}(v'') \quad \text{ and } \quad p_{{\pi}}(v) = p_{{\pi'}}(v) +p_{{\pi'}}(v'').
\end{equation}
	
Assume now that $q$ is an intersection point of two irreducible components $E_v$ and $E_{v'}$ of $\pi^{-1}(0)$.
Since the formula that we want to prove is true for the vertex $v''$ of $\Gamma_{\pi'}$ by our inductive hypothesis, we have
\begin{equation} \label{eq1}
-{m_{v''}} q_{v''} + m_v q_v + m_{v'} q_{v'}= {l_{\pi'}(v'')} -p_{\pi'}(v''),
\end{equation}
where we have used the fact that $k_{\pi'}(v'')=0$.
On the other hand, the formula is also true for $v$ in $\Gamma_{\pi'}$, so we have: 
	\begin{equation} \label{eq2}
 (E_{v}^2-1)m_v q_v + m_{v''} q_{v''}  + \sum_{[v,w], w \neq v''} m_w q_w  = k_{\pi'}(v) +l_{\pi'}(v) - p_{{\pi'}}(v),
\end{equation}
where $E_{v}^2$ denotes the self-intersection of $E_v$ in $X_{\pi}$. 
Observe that the integer $\sum_{[v,w], w \neq v''} m_w q_w$, where the sum runs over edges of $\Gamma_{\pi'}$, is equal to the sum  $\sum_{[v,w], w \neq v'} m_w q_w$ running over edges of $\Gamma_{\pi}$, and that moreover we have $k_{\pi'}(v) = k_{\pi}(v)$. 
Therefore, by summing the equalities~\eqref{eq1} and \eqref{eq2} and using equation~\eqref{eq:intersection_number_blowup}, we obtain
\[
E_{v}^2m_v q_v + m_{v'} q_{v'} + \sum_{[v,w], w \neq v'} m_w q_w = k_{\pi}(v) + l_{\pi'}(v) - p_{{\pi}}(v),
\]
which is exactly what we wanted to prove for the vertex $v$. 
By symmetry, the formula also holds for $v'$ in $\Gamma_{\pi}$. 
Moreover, the formula remains unchanged for all the vertices different from $v$ and $v'$. 
	
Assume now that $q\in E_v$ is a smooth point of $\pi^{-1}(0)$.  
The formula for $v''$ in $\Gamma_{\pi'}$ is:
\begin{equation} \label{eq3}
-m_{v''} q_{v''} + m_v q_v = l_{\pi'}(v'') - p_{\pi'}(v''),
\end{equation}
while the formula for $v$ in $\Gamma_{\pi'}$ is equation \eqref{eq2} as before. 
Again, we obtain the formula we wanted for $v$ in $\Gamma_{\pi}$ by summing the two equalities~\eqref{eq2} and \eqref{eq3} and applying equation~\eqref{eq:intersection_number_blowup}.
\end{proof}

\subsection{From global geometric data to inner rates}
\label{subsection:global_to_local}

The following result, which is a generalization of Theorem~\ref{thm:A} given in the introduction, is a simple consequence of Proposition~\ref{proposition:application_linear_algebra}.
It explains in which sense the metric structure of the germ $(X,0)$, which is a very local datum, is determined by more global geometric data, namely by the topology of a good resolution of $(X,0)$ and the position of the components of a generic hyperplane section of $(X,0)$ and of the components of the polar curve of a generic projection of $(X,0)$ onto $(\C^2,0)$.

\begin{corollary}\label{corollary_global_to_inner}
	Let $(X,0)$ be an isolated complex surface singularity and let 
	$\pi\colon X_\pi\to X$ be a good resolution of $(X,0)$ that factors through the blowup of the maximal ideal of $(X,0)$. 
	Then the inner rates of the vertices of $\Gamma_\pi$ are determined by the following data:
\begin{enumerate}
	\item 
		\label{corollary_global_to_inner_topology}
		the topological data consisting of the dual graph $\Gamma_\pi$ decorated with the Euler classes and the genera of its vertices;
		
	\item 
		\label{corollary_global_to_inner_hypsections}
		the number $l_v$, for every vertex $v$ of $\Gamma_\pi$;
		
	\item \label{corollary_global_to_inner_polar}
		the number $p_\pi(v)$, for every vertex $v$ of $\Gamma_\pi$.
\end{enumerate}
Moreover, if $\pi$ also factors through the Nash transform of $(X,0)$, then the data above determines completely the inner rate function $\calI_X$ on the whole of $\NL(X,0)$, and hence the local inner metric structure of the germ $(X,0)$.
\end{corollary}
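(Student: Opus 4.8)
The goal is to prove Corollary~\ref{corollary_global_to_inner}: that the data (i)--(iii) determine the inner rates on $\Gamma_\pi$, and that when $\pi$ also factors through the Nash transform this determines $\calI_X$ on all of $\NL(X,0)$.

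\medskip

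The plan is to read everything off Proposition~\ref{proposition:application_linear_algebra}. First I would observe that the data in items (i)--(iii) are precisely what is needed to build the right-hand side of the equation $M\underline a = \underline K + \underline L - \underline P$. Indeed, the intersection matrix $M$ and the vector $\underline K$ (whose entries are $\val_{\Gamma_\pi}(v_i)+2g(v_i)-2$) depend only on the decorated dual graph $\Gamma_\pi$, i.e. on item (i); the vector $\underline L$ has entries $l_\pi(v_i)$, which agree with the $l_v$ of item (ii) since $\pi$ factors through the blowup of the maximal ideal (as noted just before Proposition~\ref{proposition:application_linear_algebra} and in Remark~\ref{rk:p2}); and the vector $\underline P$ has entries $p_\pi(v_i)$, which is exactly item (iii). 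So the right-hand side is a known vector. The next step is to invert $M$: the intersection matrix of the exceptional divisor of a good resolution of a normal surface singularity is negative definite by Mumford's criterion, hence invertible over $\Q$. Therefore $\underline a = M^{-1}(\underline K + \underline L - \underline P)$ is uniquely determined, and since $a_i = m_{v_i} q_{v_i}$ with $m_{v_i}$ read off from $\Gamma_\pi$ (it is the multiplicity of $E_{v_i}$ in $\pi^{-1}(0)$, computable from the decorated graph as $-M^{-1}$ applied to the vector $\underline L$ again, or simply from the fact that the total transform of $h$ is principal), we recover every $q_{v_i}=\calI_X(v_i)$. This proves the first assertion.

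\medskip

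For the ``moreover'' part, suppose $\pi$ additionally factors through the Nash transform. By Lemma~\ref{lemma_inner_rate_function_NL}, $\calI_X$ is the unique continuous function on $\NL(X,0)$ whose restriction to every $\Gamma_{\pi'}$ (for $\pi'$ a good resolution factoring through the blowup of the maximal ideal and the Nash transform) is piecewise linear, and that is linear on the edges of such a $\Gamma_{\pi'}$. By the description of $\NL(X,0)$ as the inverse limit $\varprojlim_{\pi'}\Gamma_{\pi'}$, it suffices to determine $\calI_X$ on the vertices of every such $\Gamma_{\pi'}$ dominating $\pi$. Any such $\pi'$ is obtained from $\pi$ by a finite sequence of point blowups, and the values of $\calI_X$ at the new vertices are computed from the values at the vertices of $\Gamma_\pi$ by the explicit recursion of Lemma~\ref{lem:inner rate }: blowing up a smooth point of $E_v$ gives a new divisor of multiplicity $m_v$ and inner rate $q_v + 1/m_v$; blowing up a double point $E_v\cap E_{v'}$ gives multiplicity $m_v+m_{v'}$ and inner rate $(q_v m_v + q_{v'} m_{v'})/(m_v+m_{v'})$. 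Since the first step has already determined all $q_v$ and $m_v$ on $\Gamma_\pi$, iterating this recursion determines $\calI_X$ at every divisorial point, and then by continuity on all of $\NL(X,0)$. Finally, by Remark~\ref{remark:maxmin}, knowledge of all the inner rates $q_v$ allows one to compute the inner contact of any pair of curve germs via the minimax formula, so $\calI_X$ encodes the inner metric structure of $(X,0)$, which completes the proof.

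\medskip

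I expect no serious obstacle here: the whole corollary is a formal consequence of Proposition~\ref{proposition:application_linear_algebra} together with the negative-definiteness of $M$ and the already-established propagation lemmas. The one point requiring a little care is making explicit that $M$, $\underline K$, and (via the hypothesis on $\pi$) $\underline L$ are genuinely recoverable from the decorated graph alone, and noting that $p_\pi(v)$ equals $p_v$ precisely because $\pi$ factors through the Nash transform (Remark~\ref{rk:p2}), so that item (iii) as stated — phrased in terms of the configuration of the polar curve — is the correct input. I would also remark that, strictly speaking, for the first assertion one does not even need $\pi$ to factor through the Nash transform: Proposition~\ref{proposition:application_linear_algebra} holds for an arbitrary good resolution, with $\underline P$ given by the intersection multiplicities $p_\pi(v)$, so the statement can be given in that slightly greater generality, which matches the phrasing of the corollary.
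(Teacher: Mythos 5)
Your proof is correct and follows essentially the same route as the paper: build $\underline K$, $\underline L$, $\underline P$ from the given data, invert the negative definite intersection matrix $M$ to get first the multiplicities $\underline m = -M^{-1}\underline L$ and then $\underline a = M^{-1}(\underline K + \underline L - \underline P)$, and divide to recover the $q_{v_i}$. For the ``moreover'' part, where the paper invokes Lemma~\ref{lemma_inner_rate_determined_on_skeleton} (a closed formula $\calI_X(v)=\calI_X(r_\pi(v))+d(v,r_\pi(v))$ for a suitable metric $d$), you instead explicitly propagate the values by iterating the blowup recursion of Lemma~\ref{lem:inner rate }; since that lemma is exactly what is used to prove Lemma~\ref{lemma_inner_rate_determined_on_skeleton}, the two arguments are the same in substance, yours merely unrolling the induction rather than packaging it. One small stray remark: you say ``$p_\pi(v)$ equals $p_v$ precisely because $\pi$ factors through the Nash transform,'' but item (iii) is deliberately phrased in terms of $p_\pi(v)$, so no such identification is needed anywhere in the argument — you notice this yourself in your closing sentence, and the proof is unaffected.
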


Observe that we do not need to require the metric on $\Gamma_\pi$ to be part of the initial data.
Indeed, the multiplicities $m_v$ which we use to define it can be easily deduced from the data given in \ref{corollary_global_to_inner_topology} and \ref{corollary_global_to_inner_hypsections}, as will be explained in the proof.

\begin{proof} Number the vertices of $\Gamma_\pi$ as $v_1,\ldots,v_n$, and denote by $M=(E_{v_i}\cdot E_{v_j})_{1\leq i,j\leq n}$ the intersection matrix of the exceptional divisor $\pi^{-1}(0)$ of $\pi$. 
Then, in the notation of Proposition~\ref{proposition:application_linear_algebra}, the data described in the statement determine the three vectors $\underline{K}$, $\underline{L}$ and $\underline{P}$.
By combining the equations \eqref{eqn:proof_application} for $v=v_1,\ldots,v_n$ we obtain the linear system $M\underline m = -L$, where $\underline m = {}^t(m_{v_1},\ldots,m_{v_n})$ is the vector of the multiplicities of $h$ along  the exceptional components. 
The matrix $M$, being negative definite, is invertible, and therefore we can retrieve the vector of multiplicities as $\underline m = - M^{-1}L$.
Set $\underline b = \underline{K} +\underline{L} - \underline{P}$.  
By Proposition \ref{proposition:application_linear_algebra}, we have $M \underline a=\underline{b}$. 
We then have $\underline a = M^{-1} \underline b $, and therefore we obtain the inner rates $q_v$ for every vertex $v$ of $\Gamma_\pi$ by dividing each entry of $\underline a $ by the corresponding multiplicity $m_v$.
Now, if $\pi$ factors through the Nash transform of $(X,0)$, as $\calI_X$ is linear on the edges of $\Gamma_\pi$ (Corollary~\ref{cor:linearity on strings_general}) this determines the inner rate function $\calI_X$ on the dual graph $\Gamma_\pi$.
It remains to show that $\cal I_X$ is completely determined by its restriction to $\Gamma_\pi$.
This is the content of the next lemma (Lemma~\ref{lemma_inner_rate_determined_on_skeleton}).
\end{proof}

The next result will require us to consider a different metric $d$ on $\NL(X,0)$.
This is defined on a dual graph $\Gamma_\pi$ in a similar way as the one introduced in Section~\ref{subsection_preliminaries_NL}, by declaring the length of an edge connecting two divisorial valuations $v$ and $v'$ to be equal to $\lcm\{m_v,m_{v'}\}^{-1}=\gcd\{m_v,m_{v'}\}/m_vm_{v'}$.
Being stable under further blowup by a computation analogous to the one performed in Section~\ref{subsection_preliminaries_NL}, these metrics on dual graphs induce a metric $d$ on $\NL(X,0)$.
The difference between the two metrics has also been discussed in \cite[Section 7.4.10]{Jonsson2015} and, in a more arithmetic setting, in \cite[Remark 2.3.4]{BakerNicaise2016}.
Recall that $r_\pi\colon \NL(X,0)\to\Gamma_\pi$ denotes the usual retraction map.

\begin{lemma}\label{lemma_inner_rate_determined_on_skeleton}
	Let $\pi$ be a good resolution of $(X,0)$ which factors through the the blowup of the maximal ideal and through the Nash transform of $(X,0)$. 
	Then, for every $v$ in $\NL(X,0)$, we have
	\[
	\calI_X(v)=\calI_X\big(r(v)\big) + d\big(v,r_\pi(v)\big).
	\]
\end{lemma}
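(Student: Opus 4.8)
\textbf{Plan for the proof of Lemma~\ref{lemma_inner_rate_determined_on_skeleton}.}
The plan is to reduce the statement to a local computation on a single maximal ball in the non-archimedean link, and to exploit the fact that both sides of the desired identity are governed by the explicit recursion for the inner rate from Lemma~\ref{lem:inner rate }. Since $\calI_X$, $r_\pi$, and the metric $d$ are all continuous, and since divisorial points are dense in $\NL(X,0)$, it suffices to prove the formula when $v$ is a divisorial valuation. Moreover, we may replace $\pi$ by any good resolution $\pi'$ dominating it and factoring through the blowup of the maximal ideal and the Nash transform: if the formula holds for $\pi'$ it holds for $\pi$, because $r_\pi = \tilde r\circ r_{\pi'}$ for the natural retraction $\tilde r\colon\Gamma_{\pi'}\to\Gamma_\pi$, and one checks directly that $\calI_X\big(r_\pi(v)\big)+d\big(v,r_\pi(v)\big)$ is unchanged when passing from $\pi$ to $\pi'$ along a point blowup — this is exactly the additivity of $d$ along the inserted segment together with the linearity of $\calI_X$ on strings (Corollary~\ref{cor:linearity on strings_general}), which guarantees the two contributions telescope correctly. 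Hence we may assume that $v$ itself is a vertex of $\Gamma_\pi$, in which case $r_\pi(v)=v$, $d\big(v,r_\pi(v)\big)=0$, and the identity is trivial.

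The real content is therefore the case where $v$ is \emph{not} a vertex of $\Gamma_\pi$, i.e. $v$ lies in one of the connected components of $\NL(X,0)\setminus\Gamma_\pi$. After blowing up, we may assume $w:=r_\pi(v)$ is a vertex of $\Gamma_\pi$ and that $v$ is a divisorial valuation obtained from a finite sequence of point blowups all centered over $w$ or over smooth points of the exceptional divisor created in this process, with all centers lying in the half-open ball $r_\pi^{-1}(w)\setminus\Gamma_\pi$. The key point is that, once we have left $\Gamma_\pi$ through a smooth point of an exceptional component, every subsequent blowup is performed at a smooth point of the exceptional divisor (the component $E_w$ is the only "old" component nearby, and we are blowing up away from its intersections with other components). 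So we are in the repeated application of case~\ref{lem:inner rate  SMOOTH PT} of Lemma~\ref{lem:inner rate }: at each blowup of a smooth point lying on the component $E_u$ of multiplicity $m_u$, the new component has multiplicity $m_u$ again and inner rate $q_u+1/m_u$, while the length (for the metric $d$, which on such an edge equals $1/\mathrm{lcm}\{m_u,m_u\}=1/m_u$) added to the path from $w$ to $v$ is precisely $1/m_u$. Summing these equal increments along the sequence of blowups, we get $\calI_X(v)-\calI_X(w)=\sum 1/m_{u_i}=d(v,w)$, which is the assertion.

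The only subtlety to address carefully is the first step of leaving $\Gamma_\pi$: one blows up a smooth point $p$ of the component $E_w$, and here one must use that the strict transform of the polar curve does not pass through $p$ (valid because $\pi$ factors through the Nash transform and, if needed, we move $p$), so that case~\ref{lem:inner rate  SMOOTH PT} of Lemma~\ref{lem:inner rate } genuinely applies with $m_v=m_w$ and $q_v=q_w+1/m_w$; and that the edge thus created has $d$-length $1/m_w$, since both endpoints have multiplicity $m_w$. For all later blowups the relevant component continues to have the constant multiplicity $m_w$ along this chain, so the computation is a geometric telescoping with constant step $1/m_w$ — more precisely, $\calI_X$ grows by $1/m_w$ per blowup and the $d$-distance grows by $1/m_w$ per blowup, uniformly. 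The main obstacle, then, is not any deep new idea but rather bookkeeping: one must be sure that \emph{every} divisorial point of the ball $r_\pi^{-1}(w)\setminus\Gamma_\pi$ is obtained by such a chain of smooth-point blowups (so that the singular-point case~\ref{lem:inner rate  SINGULAR PT}, with its different behavior of both $\calI_X$ and $d$, never intervenes beyond the internal edges of $\Gamma_\pi$ itself), and that passing to a dominating resolution at the start of the argument is harmless — both of which follow from the structure of $\NL(X,0)$ as the projective limit of dual graphs and from Corollary~\ref{cor:linearity on strings_general}.
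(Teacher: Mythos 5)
Your first paragraph, read charitably, contains a correct reduction that is essentially the paper's argument: telescoping along a chain of blowups from $\pi$ to a big enough $\pi'$, using additivity of $d$ under sandwiched retractions, and verifying each step by linearity of $\calI_X$ on the newly created edge together with the slope computed from Lemma~\ref{lem:inner rate }\,(\ref{lem:inner rate  SMOOTH PT}). Had you stopped there (after actually writing down the slope computation rather than only citing Corollary~\ref{cor:linearity on strings_general}, which gives linearity but not the slope), the proof would be complete and would match the paper's.

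However, the subsequent paragraphs, which you present as the ``real content,'' contain a genuine error. You claim that ``once we have left $\Gamma_\pi$ through a smooth point of an exceptional component, every subsequent blowup is performed at a smooth point of the exceptional divisor,'' and, relatedly, that ``every divisorial point of the ball $r_\pi^{-1}(w)\setminus\Gamma_\pi$ is obtained by such a chain of smooth-point blowups'' with the multiplicity constant at $m_w$ throughout. This is false. After blowing up a smooth point $p\in E_w$ to create $E_1$ (with $m_{E_1}=m_w$), one may blow up the \emph{intersection} point $E_w\cap E_1$, producing a divisor $E_2$ with $m_{E_2}=2m_w$. The valuation $v_{E_2}$ has center $p$ on $X_\pi$, hence $r_\pi(v_{E_2})=w$ and $v_{E_2}\in r_\pi^{-1}(w)\setminus\Gamma_\pi$; yet it requires a singular-point blowup, and it does not have multiplicity $m_w$. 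More generally, all the divisorial valuations lying in the interiors of the edges created over $p$ — a dense set of the branch — arise only through blowups at intersection points. So case~(\ref{lem:inner rate  SINGULAR PT}) of Lemma~\ref{lem:inner rate } does intervene, contrary to what you assert, and your telescoping by constant increments $1/m_w$ misses all these points.

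The paper's proof avoids this trap by reversing the order of operations: it reduces to the case of a \emph{single} smooth-point blowup (singular blowups contribute $0$ on both sides because they change neither $r_\pi$ nor $d$), and then covers the whole new edge $e=[v_0,v_1]$ — including all the ``interior'' divisorial valuations reached by subsequent singular blowups — in one stroke, by linearity of $\calI_X$ on $e$ once the formula is verified at the endpoint $v_1$ via Lemma~\ref{lem:inner rate }\,(\ref{lem:inner rate  SMOOTH PT}). Your argument, as written, covers only the ``ray'' of smooth-point blowups and would need to be supplemented by an explicit linearity argument on each edge; as it stands it is incomplete.
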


Since $\calI_X$ is linear on the edges of $\Gamma_\pi$, this actually proves that $\cal I_X$ is completely determined by its restriction to the vertices of $\Gamma_\pi$.

\begin{proof}
	Let $\pi'$ be the minimal good resolution of $(X,0)$ that factors through $\pi$ and such that $v$ is contained in $\Gamma_{\pi'}$.
	Observe that, if $\pi''$ is a good resolution of $(X,0)$ that is sandwiched in between $\pi'$ and $\pi$, so that $\Gamma_\pi\subset\Gamma_{\pi''}\subset\Gamma_{\pi'}$, then we have $d\big(v,r_{\pi}(v)\big)=d\big(v,r_{\pi''}(v)\big)+d\big(r_{\pi''}(v),r_{\pi}(v)\big)$.
	This equality, together with the fact that $\pi'$ can be obtained from $\pi$ as a sequence of point blowups, allows us to reduce ourselves without loss of generality to the case where $\pi'$ is the composition of $\pi$ with the blowup of $X_\pi$ at a smooth point $p$ of the exceptional divisor $\pi^{-1}(0)$.
	Let $v_0$ denote the vertex of $\Gamma_\pi$ corresponding to the divisor $E_{v_0}$ containing $p$ and let $v_1$ denote the vertex of $\Gamma_{\pi'}$ corresponding to the exceptional divisor of the blowup at $p$, so that $v$ is a point of the edge $e=[v_0,v_1]$ of $\Gamma_{\pi'}$.
	Since $\calI_X$ is linear on $e$, to prove the theorem it is sufficient to prove that $\calI_X(v_1)=\calI_X(v_0)+d(v_0,v_1)$.
	This follows immediately from the definition of $d$ and from Lemma~\ref{lem:inner rate }.\ref{lem:inner rate  SMOOTH PT}.
\end{proof}

\begin{remark}\label{remark_laplacian_determines_function}
	Instead of relying on Proposition~\ref{proposition:application_linear_algebra}, part of the proof of Corollary~\ref{corollary_global_to_inner} 
	can also be replaced by a purely combinatorial argument.
	Indeed, one can prove that if $F$ is a piecewise linear map on a metric graph $\Gamma$, then the Laplacian $\Delta_\Gamma(F)$ of $F$ determines $F$ uniquely up to an additive constant as follows.
	Assume that $F$ and $F'$ are two piecewise linear functions on $\Gamma$ with the same Laplacian, so that $G = F-F'$ is a piecewise linear function such that $\Delta_\Gamma(G)=0$. 
	In order to show that $G$ is constant one can reason by contradiction as follows.
	Assume that there exists an oriented segment $[v,v']$ in $\Gamma$ along which $G$ is strictly increasing.
	Then, as $\Delta_\Gamma(G)(v')=0$, there exists a different segment $[v',v'']$ along which $G$ is strictly increasing as well.
	By iterating the same construction we obtain an infinite chain of segments along which $G$ grows.
	As $\Gamma$ is finite, this yields a closed path along which $G$ strictly increases, which is absurd.
\end{remark}

\begin{remark} 
	In earlier papers on the subject the inner rates where always computed by considering a generic projection $\ell \colon (X,0) \to (\C^2,0)$ and lifting the inner rates of the components of the exceptional divisor of a suitable resolution of the discriminant curve of $\ell$. 
	For this reason, only simple examples have been computed, since it is generally very hard to decide whether a projection is generic, and moreover computing discriminant curves is not simple. 
	Outside of the simplest examples, the calculations were usually done via computational tools such as Maple (see for example \cite[Example 15.2]{BirbrairNeumannPichon2014} or \cite[Appendix]{SampaioFernandes2018}).
	However, it is generally much simpler to compute the data appearing in Corollary~\ref{corollary_global_to_inner}, and particularly so for hypersurfaces in $(\C^3,0)$.
	This means that our result often allows for a much simpler computation of the inner rates, as is the case in the next example, where we compute the inner rates for the surface singularity of \cite[Example 15.2]{BirbrairNeumannPichon2014}. 
\end{remark}

\begin{example}\label{example:very_big}
	Consider the hypersurface singularity $(X,0)\subset(\C^3,0)$ defined by the equation $(zx^2+y^3)(x^3+zy^2)+z^7=0$. 
	A good resolution $\pi\colon X_\pi\to X$ of $(X,0)$ can be computed explicitly, we refer to \cite[Example 15.2]{BirbrairNeumannPichon2014} for the details.
	The exceptional divisor of $\pi$ is a configuration of copies $\mathbb P^1$ whose dual graph $\Gamma_\pi$ is drawn in Figure~\ref{fig:very big example} represents the  dual graph $\Gamma_{\pi}$. 
	The vertices of $\Gamma_\pi$ are decorated with their self-intersection, with solid arrows representing the components of the polar curve $\Pi$ of a generic projection of $(X,0)$, and with dashed arrows representing those of a generic hyperplane section of $(X,0)$.
 \begin{figure}[h]
  \begin{center}
\begin{tikzpicture}
  \draw[thin,dashed, >-stealth,->](-1.5,2.5)--+(-1.2,0.2);
    \draw[thin,dashed, >-stealth,->](-1.5,2.5)--+(-1.2,0.4);
      \draw[thin,dashed, >-stealth,->](-1.5,2.5)--+(-1.2,0.6);

   \draw[thin,>-stealth,->](-1.5,2.5)--+(-1.2,-0.2);
    \draw[thin,>-stealth,->](-1.5,2.5)--+(-1.2,-0.4);
     \draw[thin,>-stealth,->](-1.5,2.5)--+(-1.2,-0.6);

      \draw[thin,dashed, >-stealth,->](1.5,2.5)--+(1.2,0.2);
    \draw[thin,dashed, >-stealth,->](1.5,2.5)--+(1.2,0.4);
      \draw[thin,dashed, >-stealth,->](1.5,2.5)--+(1.2,0.6);

   \draw[thin,>-stealth,->](1.5,2.5)--+(1.2,-0.2);
    \draw[thin,>-stealth,->](1.5,2.5)--+(1.2,-0.4);
     \draw[thin,>-stealth,->](1.5,2.5)--+(1.2,-0.6);

 \draw[thin,>-stealth,->](-2,0)--+(-1.2,0.4);
  \draw[thin,>-stealth,->](2,0)--+(1.2,0.4);
    \draw[thin,>-stealth,->](0,0)--+(0,1.4);

    \draw[thin,>-stealth,->](0,3.25)--+(0.2,1.2);
     \draw[thin,>-stealth,->](0,2.86)--+(0.5,1.1);
                  \draw[thin,>-stealth,->](0,2.5)--+(0.6,1);
                   \draw[thin,>-stealth,->](0,2.12)--+(0.7,0.9);

              \draw[thin,>-stealth,->](0,1.75)--+(0.8,0.7);

 \draw[] (-2,0)circle(2.5pt);
  \draw[thin ](-2,0)--(-1,1);

  \draw[thin ](0:0)--(-1,1);
 
     \draw[thin ](0:0)--(1,1);

        \draw[thin ](1,1)--(2,0);
            \draw[] (2,0)circle(2.5pt);
   \draw[thin ](1,1)--(1.5,2.5);
    \draw[thin ](-1,1)--(-1.5,2.5);
    \draw[ fill] (1.5,2.5)circle(2.5pt);
     \draw[ fill] (-1.5,2.5)circle(2.5pt);
     \draw[thin ](-1.5,2.5)--(1.5,2.5);

\draw[fill] (-1,1)circle(2.5pt);
\draw[fill] (1,1)circle(2.5pt);
\draw[fill] (0,0)circle(2.5pt);
\draw[fill ] (2,0)circle(2.5pt);
\draw[fill ] (-2,0)circle(2.5pt);

\draw[thin] (-1.5,2.5)..controls (-0.5,3) and (0.5,3)..(1.5,2.5);
\draw[thin] (-1.5,2.5)..controls (-0.5,2) and (0.5,2)..(1.5,2.5);
\draw[thin] (-1.5,2.5)..controls (-0.5,3.5) and (0.5,3.5)..(1.5,2.5);
\draw[thin] (-1.5,2.5)..controls (-0.5,1.5) and (0.5,1.5)..(1.5,2.5);
\node(a)at(2,-0.35){-2};
\node(a)at(1,0.65){-1};
\node(a)at(0,-0.35){-5};
\node(a)at(-1,0.65){-1};
\node(a)at(-2,-0.35){-2};

\node(a)at(-0.4,3.5){-1};
\node(a)at(-0.4,1.6){-1};
\node(a)at(-1.7,2.2){-23};
\node(a)at(1.7,2.2){-23};
 
     \draw[ fill] (0,2.5)circle(2.5pt);
           \draw[ fill] (0,2.86)circle(2.5pt);
             \draw[ fill] (0,2.12)circle(2.5pt);
               \draw[ fill] (0,1.75)circle(2.5pt);
                 \draw[ fill] (0,3.25)circle(2.5pt);

  \end{tikzpicture} 
  \end{center}
\caption{}\label{fig:very big example}\end{figure}

\noindent We deduce from the data contained in Figure~\ref{fig:very big example} the vectors $\underline{m} = -M^{-1} \underline{L}$ and $\underline{b}=\underline{K}+ \underline{L} - \underline{P}$. 
We thus obtain the graph on the left of Figure~\ref{fig:very big example2}, whose vertices are decorated by the pairs $(m_v, b_v)$. Finally, by computing $ \underline a={}^t(q_{v_1}m_{v_1},\ldots,q_{v_n}m_{v_n}) =M^{-1} \underline b$ and dividing each entry by the corresponding  multiplicity  $m_{v_i}$, we deduce the inner rates $q_{v_i}$, which are inscribed on the graph on the right of Figure~\ref{fig:very big example2}. 

 \begin{figure}[h]
  \begin{center}
\begin{tikzpicture}
  
 \draw[] (-2,0)circle(2.5pt);
  \draw[thin ](-2,0)--(-1,1);
  \draw[thin ](0:0)--(-1,1);
     \draw[thin ](0:0)--(1,1);
     
        \draw[thin ](1,1)--(2,0);
            \draw[] (2,0)circle(2.5pt);
   \draw[thin ](1,1)--(1.5,2.5);
    \draw[thin ](-1,1)--(-1.5,2.5);
    \draw[ fill] (1.5,2.5)circle(2.5pt);
     \draw[ fill] (-1.5,2.5)circle(2.5pt);
     \draw[thin ](-1.5,2.5)--(1.5,2.5);

\draw[fill] (-1,1)circle(2.5pt);
\draw[fill] (1,1)circle(2.5pt);
\draw[fill] (0,0)circle(2.5pt);
\draw[fill ] (2,0)circle(2.5pt);
\draw[fill ] (-2,0)circle(2.5pt);

\draw[thin] (-1.5,2.5)..controls (-0.5,3) and (0.5,3)..(1.5,2.5);
\draw[thin] (-1.5,2.5)..controls (-0.5,2) and (0.5,2)..(1.5,2.5);
\draw[thin] (-1.5,2.5)..controls (-0.5,3.5) and (0.5,3.5)..(1.5,2.5);
\draw[thin] (-1.5,2.5)..controls (-0.5,1.5) and (0.5,1.5)..(1.5,2.5);
\node(a)at(2,-0.35){$(5, -2)$};
\node(a)at(1.8,0.9){$(10, 1 )$};
\node(a)at(0,-0.35){$(4,-1)$};
\node(a)at(-1.8,1.1){$(10, 1)$ };
\node(a)at(-2,-0.35){$(5,-2 )$};

\node(a)at(0,3.6){$(2,-1)$};
\node(a)at(0,1.3){$(2,-1)$};
\node(a)at(-2,2.5){$(1,4)$};
\node(a)at(2,2.5){$(1,4)$};
 
     \draw[ fill] (0,2.5)circle(2.5pt);
           \draw[ fill] (0,2.86)circle(2.5pt);
             \draw[ fill] (0,2.12)circle(2.5pt);
               \draw[ fill] (0,1.75)circle(2.5pt);
                 \draw[ fill] (0,3.25)circle(2.5pt);

                 
                 \begin{scope}[xshift=6cm]
                 
                  \draw[] (-2,0)circle(2.5pt);
  \draw[thin ](-2,0)--(-1,1);
  \draw[thin ](0:0)--(-1,1);
     \draw[thin ](0:0)--(1,1);
     
        \draw[thin ](1,1)--(2,0);
            \draw[] (2,0)circle(2.5pt);
   \draw[thin ](1,1)--(1.5,2.5);
    \draw[thin ](-1,1)--(-1.5,2.5);
    \draw[ fill] (1.5,2.5)circle(2.5pt);
     \draw[ fill] (-1.5,2.5)circle(2.5pt);
     \draw[thin ](-1.5,2.5)--(1.5,2.5);

\draw[fill] (-1,1)circle(2.5pt);
\draw[fill] (1,1)circle(2.5pt);
\draw[fill] (0,0)circle(2.5pt);
\draw[fill ] (2,0)circle(2.5pt);
\draw[fill ] (-2,0)circle(2.5pt);

\draw[thin] (-1.5,2.5)..controls (-0.5,3) and (0.5,3)..(1.5,2.5);
\draw[thin] (-1.5,2.5)..controls (-0.5,2) and (0.5,2)..(1.5,2.5);
\draw[thin] (-1.5,2.5)..controls (-0.5,3.5) and (0.5,3.5)..(1.5,2.5);
\draw[thin] (-1.5,2.5)..controls (-0.5,1.5) and (0.5,1.5)..(1.5,2.5);
\node(a)at(2,-0.35){$  \frac{  7}{  5}$};
\node(a)at(1.4,1){$  \frac{  6}{  5}$};
\node(a)at(0,-0.35){$  \frac{  5}{  4}$}; 
\node(a)at(-1.4,1){$  \frac{  6}{  5}$};
\node(a)at(-2,-0.35){$  \frac{  7}{ 5}$};

\node(a)at(0,3.6){$ \frac{ 3}{  2}$};
\node(a)at(0,1.3){$ \frac{  3}{  2}$};
\node(a)at(-1.8,2.5){$ 1$};
\node(a)at(1.8,2.5){$ 1$};

\node(a)at(2.5,1.5){};
 
     \draw[ fill] (0,2.5)circle(2.5pt);
           \draw[ fill] (0,2.86)circle(2.5pt);
             \draw[ fill] (0,2.12)circle(2.5pt);
               \draw[ fill] (0,1.75)circle(2.5pt);
                 \draw[ fill] (0,3.25)circle(2.5pt);
                 
                 \end{scope}

  \end{tikzpicture} 
  \end{center}
\caption{On the left: pairs $(m_v, b_v)$. On the right: inner rates $q_v$.}\label{fig:very big example2}\end{figure}
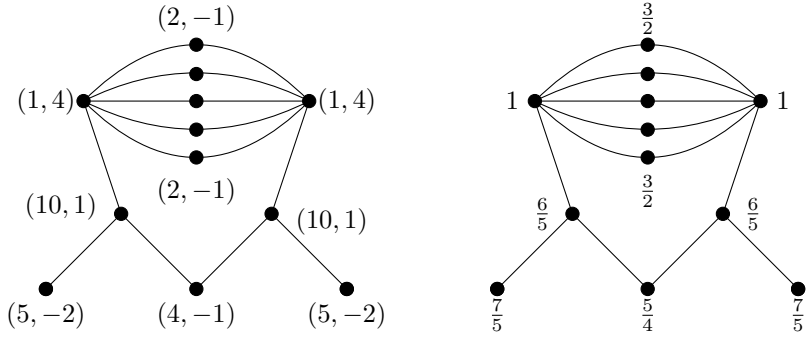
\end{example}

\subsection{Restrictions on generic hyperplane sections and generic polar curves.}
\label{subsection:restrictions_L_P}
Proposition \ref{proposition:application_linear_algebra}  gives fairly strong restrictions on the possible values of $\underline{L}  -  \underline{P}$ and so on the possible relative positions of the $\cal L$ and $\cal P$-nodes, as a consequence of the fact that the coefficients of the vectors $\underline L$ and $\underline P$ must be positive and that $q_v$ must be always at least 1, and precisely 1 at the $\cal L$-nodes of $\NL(X,0)$. 
This can be interpreted as a first step towards the famous question of D. T. L\^e (see \cite[Section 4.3]{Le2000} or \cite[Section 8]{BondilLe2002}) inquiring about the existence of a duality between the two main algorithms of resolution of a complex surface, via normalized blowups of points (\cite{Zariski1939}) or via normalized Nash transforms (\cite{Spivakovsky1990}).

\begin{example} \label{example:BS}
To illustrate this, let us consider the graph of Figure \ref{fig:BS1}.
This graph is the minimal dual resolution graph of any member of the classical Brian\c{c}on--Speder family introduced in \cite{BrianconSpeder1975}, the family of hypersurfaces in $(\C^3,0)$ defined by the equations $x^5+z^{15}+y^7z+txy^6=0$, which depend on the parameter $t$ (it is a $\mu$-constant family which is not Whitney equisingular). 
The number {$12$} between brackets means that the corresponding exceptional component has genus {$12$}; we warn the reader that this genus was mistakenly claimed to be 8 in \cite[Example 15.7]{BirbrairNeumannPichon2014} (note that this has no impact on the validity of the results of that paper).
\begin{figure}[h]
	\begin{center}
		\begin{tikzpicture}
		\draw[thin ](0,0)--(1.5,0);
		
		\draw[fill] (0,0)circle(2pt);
		\draw[fill] (1.5,0)circle(2pt);
		
		\node(a)at(0,0.3){ $-3$};
		\node(a)at(0,-0.3){{$[12]$}};
		\node(a)at(1.5,0.3){ $-2$};
		
		\end{tikzpicture} 
		\caption{ }\label{fig:BS1}
	\end{center}
\end{figure}

\noindent As shown in \emph{loc.\ cit.}, the configurations of $\cal L$-nodes differ for $t\neq 0$ and $t=0$.  
The dual graphs of the minimal resolutions factoring through the blowup of the maximal ideal are depicted in Figure \ref{fig:BS1-L nodes}.  
The numbers between parenthesis are the multiplicities $m_v$ and the arrows represent the components of a generic hyperplane section of $(X,0)$. 
We label the vertices of both configurations by $v_1,\ldots,v_4$.

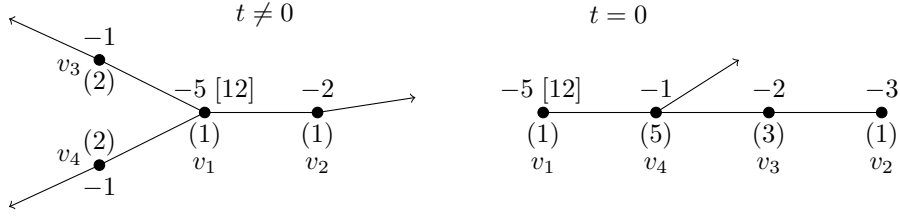
\begin{figure}[h]
	\begin{center}
		\begin{tikzpicture}
		
		\draw[thin ](0,0)--(1.5,0);
		\draw[thin ](0,0)--+(-1.4,0.7);
		\draw[thin ](0,0)--+(-1.4,-0.7);

		\draw[thin,>-stealth,->](1.5,0)--+(1.3,0.2);
		\draw[thin,>-stealth,->](-1.4,0.7)--+(-1.2,0.55);
		\draw[thin,>-stealth,->](-1.4,-0.7)--+(-1.2,-0.555);

		\draw[fill    ] (0,0)circle(2pt);
		\draw[fill   ] (1.5,0)circle(2pt);
		
		\draw[fill    ] (-1.4,0.7)circle(2pt);
		\draw[fill   ] (-1.4,-0.7)circle(2pt);
		
		\node(a)at(-0.2,0.3){ $-5$};
		\node(a)at(0.4,0.3){$[12]$};
		\node(a)at(0,-0.3){ $(1)$};
		\node(a)at(0,-0.7){ $v_1$};
		
		\node(a)at(1.5,-0.3){ (1)};
		\node(a)at(1.5,-0.7){$v_2$};
		\node(a)at(1.5,0.3){ $-2$};
		
		\node(a)at(-1.4,1){ $-1$};
		\node(a)at(-1.4,-1){ $-1$};
		
		\node(a)at(-1.4,0.4){ $(2)$};
		\node(a)at(-1.8,0.6){ $v_3$};
		
		\node(a)at(-1.4,-0.4){$(2)$};
		\node(a)at(-1.8,-0.6){ $v_4$};
		
		\node(a)at(0.8,1.3){ $t\neq0$};

		\begin{scope}[xshift=4.5cm]
		\node(a)at(1.0,1.3){ $t=0$};
		\draw[thin ](0,0)--(4.5,0);
		
		\draw[thin,>-stealth,->](1.5,0)--+(1.1,0.7);
		
		\draw[fill   ] (0,0)circle(2pt);
		\draw[fill   ] (1.5,0)circle(2pt);
		\draw[fill  ] (3,0)circle(2pt);
		\draw[fill   ] (4.5,0)circle(2pt);
		
		\node(a)at(-0.35,0.3){ $-5$};
		\node(a)at(0.25,0.3){{$[12]$}};
		\node(a)at(0,-0.3){ $(1)$};
		\node(a)at(0,-0.7){ $v_1$};

		\node(a)at(1.5,0.3){ $-1$};
		\node(a)at(1.5,-0.3){$(5)$};
		\node(a)at(1.5,-0.7){$v_4$};
		
		\node(a)at(3,0.3){ $-2$};
		\node(a)at(3.0,-0.3){$(3)$};
		\node(a)at(3.0,-0.7){$v_3$};
		
		\node(a)at(04.5,0.3){ $-3$};
		\node(a)at(4.5,-0.3){$(1)$};
		\node(a)at(4.5,-0.7){$v_2$};
		
		\end{scope}

		\end{tikzpicture} 
		\caption{Dual graphs of the minimal resolutions factoring through the blowup of the maximal ideal for the generic element ($t\neq0$) and for the central fiber ($t=0$) of the Brian\c con--Speder family.}\label{fig:BS1-L nodes}
	\end{center}
\end{figure}
For each of the two graphs of Figure \ref{fig:BS1-L nodes} we will now examine the possible values for $\underline{P}_{\pi}$ and for the inner rates, by looking at the linear equations coming from the equality $M \underline{a} = \underline{K} +  \underline{L}   -  \underline{P}$. 

Let us treat the case of the graph on the left of Figure~\ref{fig:BS1-L nodes} first. 
At $v_3$ the equation is $-2q_{v_3}+ q_{v_1} =-1+1-p_{\pi}(v_3)$. 
Since $v_3$ is an $\cal L$-node, we have $q_{v_3} = 1$, and so $q_{v_1} =2-p_{\pi}(v_3)$. 
Since $q_{v_1}$ is strictly greater than $1$ and $p_\pi(v_3)$ is a positive integer, this implies that $p_{\pi}(v_3)=0$, and therefore $q_{v_1}=2$. 
By symmetry, we also have $p_{\pi}(v_4)=0$ (and of course $q_{v_4} =1$).  
The equation corresponding to $v_1$ gives $-5 q_{v_1}+2q_{v_3} + 2q_{v_4}  +q_{v_2} =  {25}-p_{\pi}(v_1)$. 
Since $q_{v_3}=q_{v_4}=q_{v_2}=1$  and  $q_{v_1}=2$, we obtain $p_{\pi}(v_1) =30$.  
Finally, the equation for $v_2$ is: $-2q_{v_2} + q_{v_1}=-1+1-p_{\pi}(v_2)$.
Replacing $q_{v_2}=1$ and $q_{v_1}=2$, we then obtain $p_{\pi}(v_2)=0$. 
Therefore, this data of  $\underline{L}$ determines a unique possible value for  $\underline{P}$ and for the inner rates on $\Gamma_{\pi}$ (which corresponds then to that of the singularity   $x^5+z^{15}+y^7z+txy^6=0$ with $t \neq 0$). 
We have obtained  $(q_{v_1}, q_{v_2},q_{v_3} , q_{v_4})= (2,1,1,1)$ and $ \underline{P} = {}^t({30},0,0,0)$

Let us  now treat the graph on the right of Figure~\ref{fig:BS1-L nodes}. 
The equation for $v_1$ is $-5q_{v_1}+5 q_{v_4} = {23} -p_{\pi}(v_1)$. 
Since $q_{v_4}=1$, this gives $ p_{\pi}(v_1)= {18} +5 q_{v_1}$. 
For $v_4$, we get:  $-5 q_{v_4} + q_{v_1} +3q_{v_3} = 1-p_{\pi}(v_4)$, that is $q_{v_1} + 3q_{v_3} =  6- p_{\pi}(v_4)$. 
Since $m_{v_3}  =3$, we have $q_{v_3} \in \frac13 \N$. 
Since $q_{v_3} >1$, we obtain  $q_{v_3}  \geq \frac43$ and since $m_{v_1}  =1$, we have $q_{v_1} \in  \N_{\geq 2}$. 
We then get $  q_{v_1} +3q_{v_3} \geq 6$. 
Since $q_{v_1} + 3q_{v_3} =  6- p_{\pi}(v_4)$, this implies  $q_{v_3}  = \frac43$,  $q_{v_1} =2$ and $ p_{\pi}(v_4)=0$. 
Moreover, $ p_{\pi}(v_1)= {18} +5 q_{v_1} = {28}$. 
Now, the equation for $v_3$ is: $-2.3.q_{v_3} + 5 q_{v_4} + q_{v_2}=- p_{\pi}(v_3)$,   therefore $q_{v_2}= 3- p_{\pi}(v_3)$. 
In the other hand, the last equation is $-3 q_{v_2} +3q_{v_3}  = -1- p_{\pi}(v_2)$, which gives $ p_{\pi}(v_2) = 3 q_{v_2}-5$. 
Since $q_{v_2} \in \N_{>1}$, we obtain two cases: either $p_{\pi}(v_3)=1$, $q_{v_2} =2$ and $p_{\pi}(v_2)=1$, or $ p_{\pi}(v_3)=0$, $q_{v_2} =3$ and $p_{\pi}(v_2)=4$. 
So there are exactly two possible configurations for the inner rates on $\Gamma_{\pi}$ and for the vector $\underline{P}$: 

\noindent
{\bf Case 1.} $(q_{v_1}, q_{v_2},q_{v_3} , q_{v_4})= (2,2,4/3, 1)$ and $ \underline{P} = {}^t({28},1,1,0)$

\noindent
{\bf Case 2.} $(q_{v_1}, q_{v_2},q_{v_3} , q_{v_4})= (2,3,4/3, 1)$ and $ \underline{P} = {}^t({28},4,0,0)$

\noindent
Computing explicitly the polar curve from the equations from the total transform of the three functions $x, y$ and $z$ computed in \cite[Example 15.2]{BirbrairNeumannPichon2014} (as is done in \cite[Example 3.5]{BirbrairNeumannPichon2014}), it is easy to see that the singularity $x^5+z^{15}+y^7z =0$ corresponds to the first case.
We do not know whether the second case is realized by a surface singularity. 
\end{example}

\subsection{Localization of the polar curve of a morphism} 
\label{subsec:jacobian quotients}

Proposition~\ref{proposition:application_linear_algebra} can also be related  to the approach of \cite{Michel2008}, where the following natural question is considered. 
Given a complex surface germ $(X,0)$ and a finite morphism $(f,g) \colon (X,0) \to (\C^2,0)$, let $\pi \colon X_{\pi} \to X$ be the minimal good resolution of the pair $\big(X,(fg)^{-1}(0)\big)$ (that is, $\pi$ is the minimal good resolution of $(X,0)$ such that the total transform $(fg)^{-1}(0)$ is a normal crossing divisor) and let $\Gamma_{\pi, f, g}$ be the dual resolution graph of $\pi$ decorated with two sets of arrows corresponding to the  strict transforms of $f$ and $g$.
Is it then possible to localize the strict transform of the polar curve $\Pi$ of the morphism $(f,g)$ in the decorated graph $\Gamma_{\pi, f, g}$ ?

A partial answer to this question is given in \cite{Michel2008} and goes as follows.
For each vertex $v$ of $\Gamma_{\pi, f, g}$, consider the corresponding Hironaka quotient ${m_v(f)}/{m_v(g)} \in \Q_{>0}$, where $m_v(f)$ and $m_v(g)$ denote the multiplicities of $f$ and $g$ along the corresponding irreducible component $E_v$ of the exceptional divisor of $\pi$ (that is, the valuations $\ord_{E_v}(f)$ and $\ord_{E_v}(g)$). 
For each Hironaka quotient $q\in\Q$, let $E(q)$ be the union of those irreducible components $E_v$ of $\pi^{-1}(0)$ that have Hironaka quotient $q$, and let $E^1(q)\ldots,E^{n_q}(q)$ be the connected components of $E(q)$.
Finally, for every $k=1,\ldots,n_q$, denote by $\Gamma^k(q)$ the union of those components of $\Pi$ whose strict transform via $\pi$ intersects the curve $E^k(q)$.
The curve $\Gamma^k(q)$ is called a {\it bunch} in \cite{Michel2008}.

\begin{theorem} (\cite[Theorem 4.9]{Michel2008})  
The bunch $\Gamma^k(q)$ is not empty if and only if $E^k(q)$ contains a component $E_v$ such that $\chi(E'_v)<0$, where $E'_v$ is the the subset of $E_v$ consisting of the points that are smooth in the total transform $\big((fg) \circ \pi\big)^{-1}(0)$.
Moreover, 
\[
V_f\big(\Gamma^k(q)\big) = - \sum_{E_v \subset E^k(q)} m_f(v) \chi(E'_v),
\]
where $V_f(\gamma)$ denotes the multiplicity of $f$ relatively to a curve germ $(\gamma,0) \subset (X,0)$ which has no components in common with $f=0$ (see \cite[Definition 7.1]{Michel2008}).
\end{theorem}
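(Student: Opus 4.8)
The plan is to reduce the whole statement to a single intersection number on the resolution, computed for the holomorphic $2$-form $df\wedge dg$ exactly as Proposition~\ref{proposition:application_linear_algebra} does it for a generic linear form.

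Write $m_v(f)=\ord_{E_v}(f)$, $m_v(g)=\ord_{E_v}(g)$, let $q(v)=m_v(f)/m_v(g)$ be the Hironaka quotient of $E_v$, and let $\Pi^*$ be the strict transform of the polar curve $\Pi$ of $(f,g)$ on $X_\pi$. The first and main step is to establish, on the minimal good resolution $\pi$ of $\big(X,(fg)^{-1}(0)\big)$, the divisorial identity
\[
\big(d(f\circ\pi)\wedge d(g\circ\pi)\big)\;=\;\Pi^*+\sum_v\big(m_v(f)+m_v(g)-1\big)E_v ,
\]
together with the localization of $\Pi^*$ that its proof yields. The computation is local: in coordinates $(x,y)$ with $E_v=\{x=0\}$ one has $f\circ\pi=x^{m_v(f)}u$, $g\circ\pi=x^{m_v(g)}w$ with $u,w$ units (and the obvious monomial variants at a double point of $\pi^{-1}(0)$, or at a point of $\widetilde f\cup\widetilde g$), and expanding $d(f\circ\pi)\wedge d(g\circ\pi)$ shows that its order along $E_v$ is $m_v(f)+m_v(g)-1$ provided the restriction to $E_v$ of the meromorphic function $\big(f^{m_v(g)}/g^{m_v(f)}\big)\circ\pi$ is non-constant — which is exactly what minimality of $\pi$ guarantees — and that the residual curve $\Pi^*$ meets $E_v$ precisely over the critical points of that restriction; in particular $\Pi^*$ never meets $E_v$ along $\widetilde f\cup\widetilde g$, nor at a double point $E_v\cap E_{v'}$ with $q(v)\neq q(v')$ (where the relevant leading coefficient is a unit times $m_v(g)m_{v'}(g)\big(q(v)-q(v')\big)\neq 0$), so that every branch of $\Pi$ lands on a single constellation $E^k(q)$, with the same Hironaka quotient as the components it meets.

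The second step is to intersect this identity with an arbitrary exceptional component $E_v$. On the left, since $d(f\circ\pi)\wedge d(g\circ\pi)$ is a global holomorphic section of the canonical bundle $K_{X_\pi}$, one gets $K_{X_\pi}\cdot E_v = 2g_v-2-E_v^2$ by adjunction, $g_v$ being the genus of $E_v$. On the right, the total transforms $(f\circ\pi)=\sum_w m_w(f)E_w+\widetilde f$ and $(g\circ\pi)=\sum_w m_w(g)E_w+\widetilde g$ are principal divisors, so $\sum_w m_w(f)(E_w\cdot E_v)=-\widetilde f\cdot E_v$, likewise for $g$, and $\sum_w(E_w\cdot E_v)=\val_{\Gamma_\pi}(v)+E_v^2$. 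Substituting these four relations makes the self-intersections disappear and gives, for every $v$,
\[
\Pi^*\cdot E_v\;=\;2g_v-2+\val_{\Gamma_\pi}(v)+\widetilde f\cdot E_v+\widetilde g\cdot E_v\;=\;-\chi(E'_v),
\]
the last equality being the definition of $E'_v$ (the curve $E_v$ with its $\val_{\Gamma_\pi}(v)+\widetilde f\cdot E_v+\widetilde g\cdot E_v$ non-smooth points of the total transform of $fg$ removed). Now both assertions follow at once. Since $\Pi^*\cdot E_v\ge 0$ always, and since the bunch $\Gamma^k(q)$ consists exactly of the branches of $\Pi$ meeting some $E_v\subset E^k(q)$, we get $\Gamma^k(q)\ne\emptyset$ iff $\Pi^*\cdot E_v>0$ for some $E_v\subset E^k(q)$, i.e.\ iff $\chi(E'_v)<0$ for some such $v$. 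For the multiplicity formula, the localization of $\Pi^*$ from the first step gives $V_f(\delta)=\big(\delta^*\cdot(f\circ\pi)\big)=\sum_{E_v\subset E^k(q)}m_v(f)\,(\delta^*\cdot E_v)$ for every branch $\delta$ of $\Gamma^k(q)$ (no term of $\widetilde f$ occurs because $\Pi^*$ is disjoint from $\widetilde f$, and no component outside $E^k(q)$ occurs); summing over the branches of $\Gamma^k(q)$ and using $\sum_{\delta}(\delta^*\cdot E_v)=\Pi^*\cdot E_v$ for $E_v\subset E^k(q)$ yields
\[
V_f\big(\Gamma^k(q)\big)=\sum_{E_v\subset E^k(q)}m_v(f)\,(\Pi^*\cdot E_v)=-\sum_{E_v\subset E^k(q)}m_v(f)\,\chi(E'_v).
\]

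I expect the hard part to be entirely contained in the first step: controlling the position of $\Pi^*$ with respect to $\pi$ and, above all, verifying that no \emph{excess} vanishing occurs along an exceptional component — equivalently, that on the minimal normal crossing resolution of $(fg)$ the restriction of $f^{m_v(g)}/g^{m_v(f)}$ to each $E_v$ is non-constant (on a larger resolution, e.g.\ one also resolving $\Pi$, components appear where this restriction \emph{is} constant and the coefficient of $E_v$ in the divisor above jumps up, which would spoil the cancellation in the second step). This is where the combinatorics of Hironaka quotients and the minimality of $\pi$ are essential; the two displayed intersection computations, by contrast, are routine. The only cosmetic point is that if $f$ or $g$ is non-reduced one must read $\widetilde f\cdot E_v$, $\widetilde g\cdot E_v$ as the appropriate weighted intersection numbers, which changes nothing.
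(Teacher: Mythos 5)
A caveat up front: the present paper does not prove this theorem; it is quoted from Michel's paper \cite{Michel2008}, whose actual argument is a topological Euler-characteristic count on pieces of a Milnor fibre of $fg$, in the spirit of Proposition~\ref{proposition_lifting_K}, not a divisorial computation. So there is no ``paper's proof'' to compare against, and your proposal should be judged on its own merits.

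Your argument has a genuine gap, located exactly where you flagged a worry, but your resolution of that worry does not hold: the divisorial identity in your first step is already false on the minimal NC resolution $\pi$ of the pair $\big(X,(fg)^{-1}(0)\big)$. Take $X=\C^2$, $f=x$, $g=x+y^2$, so $\Pi=\{y=0\}$. The two branches of $fg$ are tangent; the minimal embedded resolution needs two blowups, producing $E_1$ (first exceptional curve, $E_1^2=-2$) and $E_2$ ($E_2^2=-1$), with $\widetilde f$ and $\widetilde g$ both meeting $E_2$ and $\Pi^*$ meeting $E_1$. One has $m_1(f)=m_1(g)=1$ and $m_2(f)=m_2(g)=2$, so every Hironaka quotient equals $1$ and $E^1(1)=E_1\cup E_2$; but a direct computation of $d(f\circ\pi)\wedge d(g\circ\pi)$ yields the divisor $\Pi^* + 2E_1 + 3E_2$, whereas your identity predicts $\Pi^* + E_1 + 3E_2$. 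The excess on $E_1$ is precisely the phenomenon you hoped minimality would forbid, and minimality does \emph{not} forbid it: the restriction of $(f^{m_1(g)}/g^{m_1(f)})\circ\pi = (f/g)\circ\pi$ to $E_1$ is the constant $1$. Consequently your per-vertex conclusion $\Pi^*\cdot E_v = -\chi(E'_v)$ fails in this example ($\Pi^*\cdot E_1=1$, $-\chi(E'_1)=-1$, $\Pi^*\cdot E_2=0$, $-\chi(E'_2)=1$), so neither assertion of Michel's theorem is actually established by your argument — even though both happen to be true here, the non-emptiness being saved by $\chi(E'_2)<0$ and the multiplicity formula by cancellation in the sum ($1\cdot 1 + 2\cdot 0 = -(1\cdot 1 + 2\cdot(-1)) = 1$). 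The content that is missing is exactly the proof of that cancellation: one needs to show that the excess vanishing order $\epsilon_w$ is supported on vertices not met by $\widetilde f\cup\widetilde g$ and all of whose neighbours lie in the same $E^k(q)$, and then use the principal-divisor relation for $f\circ\pi$ to collapse $\sum_{v\in E^k(q)} m_v(f)\sum_w \epsilon_w (E_w\cdot E_v)$ to zero. Without this, the first step is wrong and the second step, though the intersection-theoretic manipulations there are correct, inherits the error.
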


	Now, let us consider the case where $f,g$ is a pair of generic linear forms, so that $\ell=(f,g) \colon (X,0) \to (\C^2,0)$ is a generic projection of $(X,0)$. 
	Then $\pi$ is the minimal good resolution of $(X,0)$ which factors through the blowup of the maximal ideal and the data of the graph $\Gamma_{\pi, f, g}$ is equivalent to that consisting of the dual graph $\Gamma_{\pi}$ and of the vector $\underline{L}$.
	In this case, the Hironaka quotient ${m_v(f)}/{m_v(g)}$ is equal to $1$ at each vertex $v$ of $\Gamma_\pi$ and therefore there is only one bunch, the whole polar curve $\Pi$ of the projection $\ell$. 
	In this case, the result \cite[Theorem 4.9]{Michel2008} says nothing more than the L\^e--Greuel--Teissier formula stated in Proposition~\ref{proposition:Le-Greuel}. 
	However, our Proposition~\ref{proposition:application_linear_algebra} gives strong restrictions on the possible localization of the polar curve $\Pi$.
	More precisely, it decomposes the polar curve $\Pi$ into smaller bunches $\Pi_v$ than in \cite{Michel2008}, one for each vertex $v$ of $\Gamma_{\pi}$, and it  gives strong restrictions on the possible values of $p_{\pi}(v)$, as illustrated in Example~\ref{example:BS}.  
	Observe that in this case we have $V_f(\Pi_v) = \sum_{\delta \subset \Pi_v} \mult_0(\delta)$, and therefore $p_{\pi}(v)$ is nothing else than $V_f(\Pi_v)/m_v$.

\bibliographystyle{alpha}                              
\bibliography{biblio}

\vfill

\end{document}